\pgfplotsset{compat=1.15}
\newtheorem{theorem}{Theorem}
\newtheorem{corollary}[theorem]{Corollary}
\newtheorem{lemma}[theorem]{Lemma}
\newtheorem{definition}[theorem]{Definition}
\newtheorem{proposition}[theorem]{Proposition}
\newtheorem{definition-proposition}[theorem]{Definition-Proposition}
\newtheorem{remark}[theorem]{Remark}
\DeclareMathOperator{\supp}{supp}
\DeclareMathOperator{\meas}{meas}
\newcommand{\N} {\mathbb{N}}
\newcommand{\Z} {\mathbb{Z}}
\newcommand{\R} {\mathbb{R}}
\DeclarePairedDelimiter\abs{\lvert}{\rvert}%
\DeclarePairedDelimiter\norm{\lVert}{\rVert}%
\newcommand{\tnorm}[1]{{\left\vert\kern-0.25ex\left\vert\kern-0.25ex\left\vert #1 
    \right\vert\kern-0.25ex\right\vert\kern-0.25ex\right\vert}}
\let\oldabs\abs
\def\abs{\@ifstar{\oldabs}{\oldabs*}}
\let\oldnorm\norm
\def\norm{\@ifstar{\oldnorm}{\oldnorm*}}
\newcommand*\diff{\mathop{}\!\mathrm{d}}
\title{Existence and Uniqueness of Domain Walls for Notched Ferromagnetic Nanowires}
\author[1]{Raphaël Côte}
\author[1]{Clémentine Courtès}
\author[2]{Guillaume Ferrière}
\author[3]{Ludovic Godard-Cadillac}
\author[4,5]{Yannick Privat}
\affil[1]{\footnotesize IRMA, Universit\'e de Strasbourg, CNRS UMR 7501, Inria, 7 rue Ren\'e Descartes, 67084 Strasbourg (France)\smallskip}
\affil[2]{\footnotesize Univ. Lille, Inria, CNRS, UMR 8524 - Laboratoire Paul Painlevé, F-59000 Lille (France)\smallskip}
\affil[3]{\footnotesize Institut Mathématiques de Bordeaux, Bordeaux-INP, UMR 525, 351 cours de la Libération - F 33 405 Talence (France)\smallskip}
\affil[4]{\footnotesize Institut Élie Cartan de Lorraine, École des Mines de Nancy, Boulevard des Aiguillettes, 54506 Vandoeuvre-lès-Nancy (France)}
\affil[5]{\footnotesize Institut Universitaire de France (IUF)}
\begin{document}
\maketitle

\begin{abstract}
In this article, we investigate a simple model of notched ferromagnetic nanowires using tools from calculus of variations and critical point theory. Specifically, we focus on the case of a single unimodal notch and establish the existence and uniqueness of the critical point of the energy. This is achieved through a lifting argument, which reduces the problem to a generalized Sturm-Liouville equation.

Uniqueness is demonstrated via a Mountain-Pass argument, where the assumption of two distinct critical points leads to a contradiction. Additionally, we show that the solution corresponds to a system of magnetic spins characterized by a single domain wall localized in the vicinity of the notch. We further analyze the asymptotic decay of the solution at infinity and explore the symmetric case using rearrangement techniques.
\end{abstract}

\noindent\textbf{Keywords:} Micromagnetism, Nanowires, Calculus of Variations, Mountain-Path Theorem, Notched Nanowires, Stability.

\medskip

\noindent\textbf{MSC classification:} 49J05 49K05 49K40 34K04 34K16 35B38


\section{Introduction}

\subsection{Presentation of ferromagnetic nanowires}
Ferromagnetic nanowires are nanoscale devices with significant potential for applications in micromagnetic engineering. These include high-density data storage, magnetic logic gates, microelectronics, radar stealth coatings, transformers and so on (\cite{Parkin_Hayashi_Thomas_2008}).
A ferromagnetic nanowire is a nanoscale crystal of ferromagnetic atoms, characterized by a cylindrical shape with a small cross-section and a much greater length in one direction. The ferromagnetic interactions among the crystal's atoms cause the magnetic spins to align with one another, while the shape anisotropy of the crystal promotes alignment along the wire's principal axis. When the magnetization points in different directions at the wire's ends, the magnetic spins must reverse orientation in the middle of the wire to satisfy the topological constraint.
This narrow region where the magnetization changes rapidly is known as a Néel wall. Its stability and dynamics have been the subject of extensive research; see, for example,~\cite{Carbou_Jizzini_2015, Carbou_Labbe_2006, Cote_Ferriere__2DW, Cote_Ignat__stab_DW_LLG_DM, Ferriere_preprint, Jizzini_2011, Keisuke_2011, Labbe_Privat_Trelat_2012, Slatiskov_Sonnenberg_2012, Thiaville_Nakatani_2006} and references therein.
Control theory for Néel walls has also been extensively studied and developed~\cite{Carbou_Labbe_2012,Carbou_Labbe_Trelat_2007,MR3348399}. 
For a more comprehensive introduction to micromagnetism, we refer readers to ~\cite{hubert2008magnetic}.

In nanowires designed for industrial applications, it is common to introduce defects to stabilize the Néel wall near a specific defect. One such defect involves adding notches to the nanowire, which are small regions with reduced radius, thereby decreasing the exchange interaction locally. In this article, we study a simplified model introduced in~\cite{book_carbou,carbou2018stabilization},  which is based on a 1D modified Sturm-Liouville equation. The existence of a solution was demonstrated using a shooting method, while a uniqueness result was later established in~\cite{Ignat_2022}.

The primary aim of this work is to reinterpret the results of \cite{book_carbou,carbou2018stabilization} using the framework of the calculus of variations and a minimization problem, making the approach more aligned with the underlying physics. We provide a new proof of existence and establish a more general uniqueness result based on a detailed analysis of the critical points of the associated energy.

\subsection{A model for a notched ferromagnetic nanowire}\label{sec:remind}

We are interested in a model of a straight notched ferromagnetic nanowire. The direction of the nanowire is assumed to be $\mathbf{e_1}$ where 
\begin{equation*}
    \mathbf{e_1}= \begin{pmatrix} 
    1 \\ 0 \\ 0
    \end{pmatrix}, \quad
    \mathbf{e_2}= \begin{pmatrix} 
    0 \\ 1 \\ 0
    \end{pmatrix}, \quad
    \mathbf{e_3} = \begin{pmatrix} 
    0 \\ 0 \\ 1
    \end{pmatrix}
\end{equation*}
is the canonical basis of $\mathbb{R}^3$. The magnetization $\boldsymbol{m} = (m_1, m_2, m_3): \mathbb{R} \to \mathbb{S}^2$ of this nanowire takes its values into the unit sphere $\mathbb{S}^2\subset \mathbb{R}^3$.
We will consider models introduced in \cite{book_carbou,carbou2018stabilization}, in which the magnetization behavior is obtained due to a $\Gamma$-convergence reasoning: a cylindrical material $\mathcal{D}_\eta$ is considered, given in $\mathscr{B}$ by
\begin{equation*}
\mathcal{D}_\eta=\{(x,y,z)\in [-L,L]\times \R^2, \ y^2+z^2\leq \eta ^2\rho(x)^2\},
\end{equation*}
whose circular section, parameterized by a function $\rho$, has radius $\eta \rho (x)$ with $\eta >0$. 
	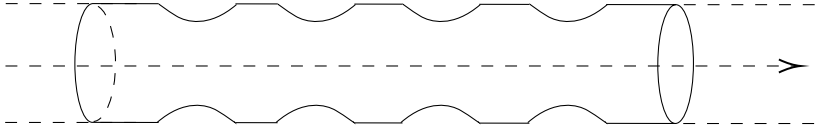
\begin{figure}[h!]
	\centering
\begin{tikzpicture}[x=0.75pt,y=0.75pt,yscale=-1,xscale=1]

\draw  [dash pattern={on 4.5pt off 4.5pt}] (153,93.61) .. controls (153.02,77.38) and (158.08,64.24) .. (164.29,64.25) .. controls (170.5,64.26) and (175.52,77.42) .. (175.49,93.64) .. controls (175.47,109.86) and (170.42,123.01) .. (164.21,123) .. controls (157.99,122.99) and (152.98,109.83) .. (153,93.61) -- cycle ;
\draw  [color={rgb, 255:red, 255; green, 255; blue, 255 }  ,draw opacity=1 ][fill={rgb, 255:red, 255; green, 255; blue, 255 }  ,fill opacity=1 ] (149.2,58.76) -- (164.2,58.76) -- (164.2,123) -- (149.2,123) -- cycle ;

\draw   (457.83,124.02) -- (163.85,123.45) .. controls (158.88,123.44) and (154.92,110.02) .. (155,93.46) .. controls (155.08,76.9) and (159.18,63.49) .. (164.14,63.5) -- (458.13,64.06) .. controls (463.09,64.07) and (467.06,77.5) .. (466.97,94.06) .. controls (466.89,110.61) and (462.8,124.03) .. (457.83,124.02) .. controls (452.87,124.01) and (448.91,110.58) .. (448.99,94.02) .. controls (449.07,77.47) and (453.16,64.05) .. (458.13,64.06) ;
\draw  [color={rgb, 255:red, 255; green, 255; blue, 255 }  ,draw opacity=1 ][fill={rgb, 255:red, 255; green, 255; blue, 255 }  ,fill opacity=1 ] (320.5,44.07) -- (358.73,44.07) -- (358.73,144.5) -- (320.5,144.5) -- cycle ;
\draw    (320.5,64) .. controls (319.5,63) and (324.5,67) .. (329.29,69.85) .. controls (334.08,72.7) and (345.5,76) .. (359.5,64) ;
\draw    (358.41,123.19) .. controls (360.5,125) and (354.38,120.23) .. (349.56,117.43) .. controls (344.75,114.62) and (333.3,111.44) .. (319.41,123.57) ;

\draw  [dash pattern={on 4.5pt off 4.5pt}]  (186.14,63.52) -- (116.14,63.52) ;
\draw  [dash pattern={on 4.5pt off 4.5pt}]  (185.85,123.5) -- (115.85,123.5) ;
\draw  [dash pattern={on 4.5pt off 4.5pt}]  (528.12,64.06) -- (458.12,64.06) ;
\draw  [dash pattern={on 4.5pt off 4.5pt}]  (527.83,124.04) -- (457.83,124.04) ;
\draw  [color={rgb, 255:red, 255; green, 255; blue, 255 }  ,draw opacity=1 ][fill={rgb, 255:red, 255; green, 255; blue, 255 }  ,fill opacity=1 ] (384.5,44.07) -- (422.73,44.07) -- (422.73,144.5) -- (384.5,144.5) -- cycle ;
\draw    (384.5,64) .. controls (383.5,63) and (388.5,67) .. (393.29,69.85) .. controls (398.08,72.7) and (409.5,76) .. (423.5,64) ;
\draw    (422.41,123.19) .. controls (424.5,125) and (418.38,120.23) .. (413.56,117.43) .. controls (408.75,114.62) and (397.3,111.44) .. (383.41,123.57) ;

\draw  [color={rgb, 255:red, 255; green, 255; blue, 255 }  ,draw opacity=1 ][fill={rgb, 255:red, 255; green, 255; blue, 255 }  ,fill opacity=1 ] (257.5,44.07) -- (295.73,44.07) -- (295.73,144.5) -- (257.5,144.5) -- cycle ;
\draw    (257.5,64) .. controls (256.5,63) and (261.5,67) .. (266.29,69.85) .. controls (271.08,72.7) and (282.5,76) .. (296.5,64) ;
\draw    (295.41,123.19) .. controls (297.5,125) and (291.38,120.23) .. (286.56,117.43) .. controls (281.75,114.62) and (270.3,111.44) .. (256.41,123.57) ;

\draw  [color={rgb, 255:red, 255; green, 255; blue, 255 }  ,draw opacity=1 ][fill={rgb, 255:red, 255; green, 255; blue, 255 }  ,fill opacity=1 ] (197.5,44.07) -- (235.73,44.07) -- (235.73,144.5) -- (197.5,144.5) -- cycle ;
\draw    (197.5,64) .. controls (196.5,63) and (201.5,67) .. (206.29,69.85) .. controls (211.08,72.7) and (222.5,76) .. (236.5,64) ;
\draw    (235.41,123.19) .. controls (237.5,125) and (231.38,120.23) .. (226.56,117.43) .. controls (221.75,114.62) and (210.3,111.44) .. (196.41,123.57) ;

\draw  [dash pattern={on 4.5pt off 4.5pt}]  (120,95) -- (519,95) ;
\draw [shift={(521,95)}, rotate = 180] [color={rgb, 255:red, 0; green, 0; blue, 0 }  ][line width=0.75]    (10.93,-3.29) .. controls (6.95,-1.4) and (3.31,-0.3) .. (0,0) .. controls (3.31,0.3) and (6.95,1.4) .. (10.93,3.29)   ;
\end{tikzpicture}
	\caption{An example of domain $\mathcal{D}_\eta$.}
	\end{figure}

A 1D model is then derived by making $\eta$ tend towards 0. The 1D model involves the cross section area $s$ defined by $s(x)=\pi \rho(x)^2$ and the nanowire domain becomes $\Omega = \mathbb{R} \mathbf{e_1} \subset \mathbb{R}^3$,

Let us denote by $\alpha>0$ the gyromagnetic ratio and by $\ell>0$ the exchange length. 
In what follows, $\Omega$ stands for the nanowire domain and $s\in W^{1,\infty}(\Omega)$ for the residual cross-section area of the nanowire, or in other words the shape of notches on the nanowire.
The asymptotic Landau-Lifshitz model for magnetization in notched nanowires, provided in \cite{book_carbou,carbou2018stabilization}, reads

\begin{equation}\label{eq:LLG}\tag{LLG}
\left\{\begin{array}{ll}
\partial_t\boldsymbol{m}=-\boldsymbol{m}\times {H}(\boldsymbol{m})-\alpha \boldsymbol{m}\times (\boldsymbol{m}\times {H}(\boldsymbol{m})) \\
{H}(\boldsymbol{m})=\frac{\ell^2}{s(x)}\partial_x \left(s(x)\partial_x\boldsymbol{m}\right)-(m_2\mathbf{e_2}+m_3\mathbf{e_3}), & \\
\end{array}
\right.
\end{equation}
To avoid the nanowire collapse, it is assumed that
\begin{equation*}
\exists s_0>0 \textrm{ such that }s(\cdot) \ge s_0\textrm{ in }\Omega.
\end{equation*}
We will also impose a uniform upper-bound on $s$. Up to using a renormalization argument by dividing $s(x)$ by some $s_1 > 0$ (it is important to note that $\boldsymbol{m}$ still satisfies \eqref{eq:LLG} when we change $s(x)$ by $\frac{s (x)}{s_1}$), that will be fixed equal to 1:
\begin{equation*}
s(\cdot)\leq 1\textrm{ in }\Omega.
\end{equation*}
so that in particular, we must also assume that $s_0<1$.

It will be convenient to consider cross sections $s$ that are compactly supported :
\begin{equation*}
\mathscr{S}_a(\Omega)= \{s\in W^{1,\infty}(\Omega,[s_0,1])\mid s(x)=1\text{ in }\Omega\setminus [-a,a], \, s \geq s_0 \text{ in }\Omega\}.
\end{equation*}

The \eqref{eq:LLG} flow is equivariant under rotations $R_\varphi \coloneqq \begin{pmatrix} 
1 & 0 & 0 \\
0 & \cos \varphi & -\sin \varphi \\
0 & \sin \varphi & \cos \varphi
\end{pmatrix}$ around the axis $\mathbf{e_1}$ with angle $\varphi\in \mathbb R$. This transformation preserves $\mathbb S^2$ valued functions, and so acts on magnetization; it also extends naturally to functions of space and time, for which it preserves solutions to \eqref{eq:LLG}.

\subsection{Energy, functional spaces and Cauchy theory}

From this formulation of the Landau-Lifschitz-Gilbert equation, it is possible to write an energetic problem. Using a change of variable, we can reduce the problem to $\ell = 1$ (which simplifies the notations). We now observe that satisfying \eqref{eq:LLG} is equivalent to being a critical point of the following energy: 

\begin{equation}\label{expr:SS_infinite:Energy_m}
    E_s(\boldsymbol{m}):=\frac{1}{2}\int_\R \abs{\partial_x \boldsymbol{m} (x)}^2 s(x)\, \diff x
    +\frac{1}{2} \int_\R (m_2^2 (x) + m_3^2 (x)) \,s(x)\, \diff x.
\end{equation}

To ensure that this energy has meaning, we introduce the energy space $\mathcal{H}^1$
\begin{equation*}
    \mathcal{H}^1 \coloneqq \{ f = (f_1, f_2, f_3) \in H^1_\textnormal{loc} (\mathbb R) \mid f', f_2, f_3 \in L^2 (\mathbb R) \text{ and } \abs{f} = 1 \text{ a.e.} \},
\end{equation*}
which is simply the same energy space on $\R$ as in \cite{Cote_Ignat__stab_DW_LLG_DM}, except that we add the weight $s (x) \diff x$ in the norm, so that
\begin{equation*}
    2 E_s(\boldsymbol{m}) = \int_\R \abs{\partial_x \boldsymbol{m}}^2 s(x)\, \diff x+\int_\R (m_2^2 + m_3^2) s(x)\, \diff x \eqqcolon \|\boldsymbol{m}\|_{\mathcal{H}^1}^2.
\end{equation*}
Similarly are defined the spaces $\mathcal{H}^k$ and $\mathcal{H}^\infty$.
To the best of our knowledge, the Cauchy theory for one-dimensional Landau-Lifshitz-Gilbert (LLG) equations with notches has not yet been developed.

\subsection{Domain walls for notched nanowire: a variational formulation}
Throughout this article we call a \textit{domain walls system} or a \textit{Néel walls system} any stationary solution $\boldsymbol{m}$ to the uni-dimensional LLG equation (with or without notch) that is connecting the two constant states $-\mathbf{e_1}$ and $+\mathbf{e_1}$. Without restriction of generality, we will focus of the case :
$$\boldsymbol{m}(-\infty)=-e_1\qquad\text{and}\qquad\boldsymbol{m}(+\infty)=+e_1.$$
The steady-states $\boldsymbol{m} \in \mathcal{H}^1$ for this system solve the equation
\begin{equation}\label{eqLLSS:infinite}
\left\{\begin{array}{ll}
\boldsymbol{m}\times {H}(\boldsymbol{m})=0 & \text{on } \R\\
{H}(\boldsymbol{m})=\frac{1}{s(x)}\partial_x \left(s(x)\partial_x\boldsymbol{m}\right)-(m_2\mathbf{e_2}+m_3\mathbf{e_3}). &
\end{array}
\right.
\end{equation}

It has been proved in \cite{book_carbou,carbou2018stabilization, Ignat_2022}  that every steady solution in \eqref{eqLLSS:infinite} lays in a single plane and then, up to a constant rotation of angle $\varphi$, reads
\begin{equation}\label{expr:SS_infinite}
\boldsymbol{m}(x)=R_\varphi \begin{pmatrix}
\sin\theta(x)\\ \cos \theta(x)\\ 0
\end{pmatrix}, \quad \text{with}\quad R_\varphi=\begin{pmatrix}
1 & 0 & 0\\
0 & \cos \varphi & -\sin \varphi \\
0 & \sin \varphi & \cos \varphi
\end{pmatrix},
\end{equation}
where $\theta$ solves the non-linear Sturm-Liouville equation 

\begin{equation} \label{expr:SSTheta_infinite}
    \theta''(x)+\frac{s'(x)}{s(x)}\theta'(x)+\cos \theta(x)\sin \theta(x)=0, \qquad x\in \R,
\end{equation}
or, in another way,
\begin{equation}\label{expr:SS_infinite:div a grad formulation}
    \left(s(x)\theta'(x)\right)'+s(x)\,\cos \theta(x)\sin \theta(x)=0\quad \text{in }\R .
\end{equation}
This last expression leads to the variational formulation of the equation:
\begin{equation*}
    \forall\;\varphi\in{\mathcal D}(\R),\qquad\int_\R\theta'(x)\,\varphi'(x)\,s(x)\diff x=\int_\R\cos \theta(x)\sin \theta(x)\,\varphi(x)\,s(x)\diff x.
\end{equation*}

It is natural to expect the presence of a "single domain wall," which refers to a unique reversal of the spin from $-\mathbf{e_1}$ to $-\mathbf{e_1}$ when $\boldsymbol{m}$ is the global minimizer of $E_s$. This configuration, simply called a \emph{domain wall}, holds for notchless nanowires but may not apply if the geometry of the notch does not support such a configuration.

Before proceeding to state our theorem, we must first provide a rigorous definition of the concept of a "single domain wall," which we outline here:
\begin{definition}\label{def:domWall}
    A solution to~\eqref{eq:LLG} is said to be a \textit{domain wall} if it is stationary, local minimizer of the energy $E_s$ in \eqref{expr:SS_infinite:Energy_m}, and the associated angle $\theta$ define at~\eqref{expr:SS_infinite} is monotonous and satisfy $\theta(\pm\infty)=\pm\pi/2$.
\end{definition}

\subsection{A minimization problem for infinite unimodal nanowire}
Our main result focuses on the case of an infinite nanowire with a single notch. To demonstrate that the minimizer corresponds to a single domain wall, we restrict our analysis to a specific class of notches, namely unimodal notches:
\begin{equation}\label{def:classU}
\mathscr{U}_a(\mathbb{R})= \{s\in \mathscr{S}_a(\mathbb{R})\mid s\text{ is non-increasing on } [-a, 0] \text{ and non-decreasing on }[0,a]\}.
\end{equation}
Such a notch decreases the exchange interaction between ferromagnetic atoms and we prove in this article that in this case we have a ``single domain wall'' that is localized in a close neighborhood of the notch. If we have good symmetry properties, we can prove more precise results. For that purpose we introduce the symmetric notches :
\begin{equation*}
\mathscr{A}_a(\Omega)= \{s\in \mathscr{S}_a(\Omega)\mid s\text{ is even and non-decreasing on }[0,a]\}.
\end{equation*}
In particular, $\mathscr{A}_a (\Omega) \subset \mathscr{U}_a (\Omega)$.
\begin{figure}[h!]
\begin{center}
\begin{tikzpicture}[x=0.75pt,y=0.75pt,yscale=-1,xscale=1]

\draw  [dash pattern={on 4.5pt off 4.5pt}] (165,54.15) .. controls (165.02,37.67) and (170.08,24.33) .. (176.29,24.33) .. controls (182.5,24.34) and (187.52,37.71) .. (187.5,54.18) .. controls (187.47,70.66) and (182.42,84.01) .. (176.21,84) .. controls (169.99,83.99) and (164.98,70.63) .. (165,54.15) -- cycle ;
\draw  [color={rgb, 255:red, 255; green, 255; blue, 255 }  ,draw opacity=1 ][fill={rgb, 255:red, 255; green, 255; blue, 255 }  ,fill opacity=1 ] (161.2,18.76) -- (176.2,18.76) -- (176.2,84) -- (161.2,84) -- cycle ;

\draw   (446.83,85.04) -- (174.85,84.5) .. controls (169.88,84.49) and (165.92,71.05) .. (166,54.49) .. controls (166.08,37.93) and (170.17,24.51) .. (175.14,24.52) -- (447.12,25.06) .. controls (452.09,25.07) and (456.05,38.51) .. (455.97,55.07) .. controls (455.89,71.63) and (451.8,85.05) .. (446.83,85.04) .. controls (441.86,85.03) and (437.9,71.59) .. (437.98,55.03) .. controls (438.06,38.47) and (442.16,25.05) .. (447.12,25.06) ;
\draw  [color={rgb, 255:red, 255; green, 255; blue, 255 }  ,draw opacity=1 ][fill={rgb, 255:red, 255; green, 255; blue, 255 }  ,fill opacity=1 ] (291.5,5.07) -- (329.73,5.07) -- (329.73,105.5) -- (291.5,105.5) -- cycle ;
\draw    (291.5,25) .. controls (290.5,24) and (295.5,28) .. (300.29,30.85) .. controls (305.08,33.7) and (316.5,37) .. (330.5,25) ;
\draw  [dash pattern={on 4.5pt off 4.5pt}]  (109,56) -- (508,56) ;
\draw [shift={(510,56)}, rotate = 180] [color={rgb, 255:red, 0; green, 0; blue, 0 }  ][line width=0.75]    (10.93,-3.29) .. controls (6.95,-1.4) and (3.31,-0.3) .. (0,0) .. controls (3.31,0.3) and (6.95,1.4) .. (10.93,3.29)   ;
\draw    (329.41,84.19) .. controls (331.5,86) and (325.38,81.23) .. (320.56,78.43) .. controls (315.75,75.62) and (304.3,72.44) .. (290.41,84.57) ;
\draw  [dash pattern={on 4.5pt off 4.5pt}]  (175.14,24.52) -- (105.14,24.52) ;
\draw  [dash pattern={on 4.5pt off 4.5pt}]  (174.85,84.5) -- (104.85,84.5) ;
\draw  [dash pattern={on 4.5pt off 4.5pt}]  (517.12,25.06) -- (447.12,25.06) ;
\draw  [dash pattern={on 4.5pt off 4.5pt}]  (516.83,85.04) -- (446.83,85.04) ;
\end{tikzpicture}
	\caption{An infinite nanowire with one notch.}
\end{center}
\end{figure}
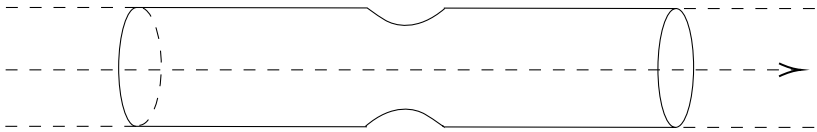

The energy $E_s$ represents the magnetic energy of the nanowire, and it is particularly relevant for the study of domain walls. It is always non-negative, making it suitable for minimization problems. However, it is crucial to correctly define the functional space on which the minimization is performed to ensure the problem is non-trivial. Indeed, if we attempt to minimize directly within the space $\mathcal{H}^1$, we find that $\min_{\boldsymbol{m} \in \mathcal{H}^1} E_s (\boldsymbol{m}) = 0$, with the constant solutions $\mathbf{e_1}$ and $-\mathbf{e_1}$ as minimizers.

To make the problem physically meaningful, boundary conditions at $\pm \infty$ must be imposed. This additional constraint makes the problem non-trivial. For example, in the case of a notchless nanowire ($s \equiv 1$), it is well-known that the domain wall solution $w_*$ is the unique minimizer (up to rotations and translations) of $E_1(\boldsymbol{m})$ in $\mathcal{H}^1$, under the condition that $\lim_{\pm \infty} \boldsymbol{m} = \pm \mathbf{e_1}$.

We will now apply this framework to study the following minimization problem in the case of a nanowire with a notch:
\begin{equation} \label{eq:min_prob}
    \mathfrak{m} \coloneqq \inf_{\boldsymbol{m} \in \mathcal{H}^1_{\neq}} E_s (\boldsymbol{m}),
\end{equation}
where
\begin{equation*}
    \mathcal{H}^1_{\neq} \coloneqq \{ m \in \mathcal{H}^1 \; : \; \lim_{- \infty} m = - \mathbf{e_1} \text{ and } \lim_{\infty} m = \mathbf{e_1} \}.
\end{equation*}

Let us now state our main result:

\begin{theorem}
    For any $s \in \mathscr{U}_a(\mathbb{R})$ such that $s(\cdot) \neq 1$, there exists a \emph{unique} domain wall\footnote{see Definition~\ref{def:domWall}} $\boldsymbol{w}_s$ connecting $- \mathbf{e_1}$ to $+ \mathbf{e_1}$ in the following sense:
    \begin{itemize}
        \item There exists a unique $\theta_s \in H^1_\textnormal{loc} (\mathbb{R})$ satisfying
        \begin{itemize}
            \item $\lim_{x \rightarrow \pm \infty} \theta_s (x) = \pm \frac{\pi}{2}$,
            \item The function $\displaystyle 
                    \boldsymbol{w}_s \coloneqq 
                    \begin{pmatrix}
                        \sin \theta_s \\
                        \cos \theta_s \\
                        0
                    \end{pmatrix}$
            is a steady state of \eqref{eq:LLG} which belongs to $\mathcal{H}^1_{\neq}$. 
        \end{itemize}
       \item The function $\theta_s$ is increasing and satisfy the following decay estimate :
       $$    \forall\;x\in\R,\qquad\bigg|\big|\theta_s(x)\big|-\frac{\pi}{2}\bigg|\;\leq\; \pi \,\exp\bigg(-\int_0^{|x|}\frac{dy}{s(y)}\bigg).$$
        \item If $s\in\mathscr{A}$ then $\theta_s$ is odd.
        
        \item Up to rotations $R_\varphi$ around $\mathbf{e_1}$, $\boldsymbol{w}_s$ is the unique minimizer of the minimization problem \eqref{eq:min_prob}.
    \end{itemize}
\end{theorem}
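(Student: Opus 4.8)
The plan is to reduce the vectorial minimization~\eqref{eq:min_prob} to a one-dimensional Sturm--Liouville problem by a lifting argument, to prove existence by the direct method corrected with a concentration--compactness alternative, to read off monotonicity and the decay rate directly from equation~\eqref{expr:SSTheta_infinite}, and to obtain uniqueness --- hence uniqueness of the minimizer --- by a mountain-pass argument. For the lifting, write $\boldsymbol m=(m_1,m_2,m_3)\in\mathcal H^1_{\neq}$ with $m_1=\cos\phi$, $\phi=\arccos m_1\in H^1_{\mathrm{loc}}(\mathbb R)$ the polar angle to the $\mathbf{e_1}$-axis, and let $\psi$ denote the azimuthal angle on $\{|m_1|<1\}$. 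Since $|\partial_x\boldsymbol m|^2=(\phi')^2+\sin^2\phi\,(\psi')^2\ge(\phi')^2$ a.e.\ and $m_2^2+m_3^2=\sin^2\phi$, setting $\theta:=\tfrac\pi2-\phi$ gives
\[
E_s(\boldsymbol m)\ \ge\ \mathcal E_s(\theta)\ :=\ \tfrac12\int_{\mathbb R}s\big((\theta')^2+\cos^2\theta\big)\,dx ,
\]
with equality exactly when $\psi$ is locally constant, i.e.\ when $\boldsymbol m$ is planar of the form~\eqref{expr:SS_infinite}; conversely any admissible $\theta$ yields an admissible $\boldsymbol m$. The conditions $\lim_{\pm\infty}\boldsymbol m=\pm\mathbf{e_1}$ become $\theta(\pm\infty)=\pm\tfrac\pi2$ (a priori modulo $2\pi$, extra windings being charged to the Dirichlet term), so that $\mathfrak m=\inf\mathcal E_s(\theta)$ over this scalar class, and the critical points of $\mathcal E_s$ are exactly the weak solutions of~\eqref{expr:SS_infinite:div a grad formulation}: it thus suffices to study the scalar problem.

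For existence I would run the direct method on $\mathcal E_s$, the decisive ingredient being the strict energy gap
\[
\mathfrak m\ \le\ \mathcal E_s(\theta_*)\ <\ E_1(w_*) ,
\]
where $\theta_*$ (resp.\ $w_*$) is the explicit notchless profile and $E_1(w_*)$ the notchless infimum: the inequality is strict because $s\le1$ with $s\ne1$ on a set of positive measure, on which the (positive) integrand contributes. Given a minimizing sequence $(\theta_n)$, coercivity (from $s\ge s_0$) bounds $\theta_n'$ and $\cos\theta_n$ in $L^2$; after a translation normalizing $\theta_n(0)=0$ and extraction, $\theta_n\rightharpoonup\theta_s$ in $H^1_{\mathrm{loc}}$ and locally uniformly, with $\mathcal E_s(\theta_s)\le\mathfrak m$ by weak lower semicontinuity. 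The delicate point is that the behaviour at $\pm\infty$ survives --- that $\theta_s$ is neither constant nor a wall that has partly leaked to infinity --- and this is exactly where the gap enters: vanishing is excluded by the normalization, while in the concentration--compactness dichotomy a piece escaping into $\{s\equiv1\}$, or the wall splitting into several sub-walls, forces $\liminf\mathcal E_s(\theta_n)\ge E_1(w_*)>\mathfrak m$, a contradiction. Hence $\theta_s$ is a minimizer; it is a weak and, since $s\in W^{1,\infty}$, a strong solution of~\eqref{expr:SSTheta_infinite} with $\theta_s(\pm\infty)=\pm\tfrac\pi2$, so that $\boldsymbol{w}_s:=(\sin\theta_s,\cos\theta_s,0)$ is a steady state of~\eqref{eq:LLG} in $\mathcal H^1_{\neq}$.

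Monotonicity is read off the auxiliary quantity $H:=\tfrac12 s\big((\theta')^2-\cos^2\theta\big)$, which along any solution of~\eqref{expr:SSTheta_infinite} satisfies $H'=-\tfrac12 s'\big((\theta')^2+\cos^2\theta\big)$. When $s$ is unimodal --- $s'\le0$ on $[-a,0]$, $s'\ge0$ on $[0,a]$, $s'=0$ outside $[-a,a]$ --- and $\theta',\cos\theta\to0$ at $\pm\infty$, $H$ is nondecreasing on $(-\infty,0]$, nonincreasing on $[0,\infty)$, and $H(\pm\infty)=0$, so $H\ge0$, i.e.\ $|\theta'|\ge|\cos\theta|$ everywhere. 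Consequently an interior zero $x_0$ of $\theta_s'$ would force $\cos\theta_s(x_0)=0$, making $(\theta_s(x_0),\theta_s'(x_0))$ a rest point of~\eqref{expr:SSTheta_infinite}, so by Cauchy uniqueness $\theta_s$ would be constant --- impossible; hence $\theta_s$ is strictly increasing, and, being a local minimizer, it is a domain wall in the sense of Definition~\ref{def:domWall}. For the decay, put $g:=\tfrac\pi2-\theta_s\ge0$ and work where $\theta_s>0$; from~\eqref{expr:SS_infinite:div a grad formulation} together with $\sin g\cos g<g$, $s\le1$ and $g(+\infty)=0$ one checks $D:=g-s\theta_s'\ge0$, whence $\big(g\,e^{\int_0^x dy/s(y)}\big)'\ge0$; integrating and matching with the explicit profile of $g$ beyond the notch gives $\tfrac\pi2-|\theta_s(x)|\le\pi\,e^{-\int_0^{|x|}dy/s(y)}$ for $x\ge0$, the region $x\le0$ being symmetric. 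Finally, if $s$ is even then $x\mapsto-\theta_s(-x)$ also solves~\eqref{expr:SSTheta_infinite} with the same limits (the drift $s'/s$ being odd), so the uniqueness below forces $\theta_s$ odd.

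For uniqueness, assume~\eqref{expr:SSTheta_infinite} had two distinct solutions $\theta_1\ne\theta_2$ with $\theta_i(\pm\infty)=\pm\tfrac\pi2$; by the $H\ge0$ argument both are strictly increasing, hence domain walls, and in particular strict local minimizers of $\mathcal E_s$ on the admissible class. The Mountain-Pass Theorem then produces a further critical point $\theta^\sharp$, at a pass level $c$ strictly above the common value $\mathfrak m$ of $\theta_1$ and $\theta_2$; taking as competing path the segment between $\theta_1$ and $\theta_2$ keeps $c$ below the energy of any configuration winding at infinity. But $\theta^\sharp$ solves~\eqref{expr:SSTheta_infinite}, so the $H\ge0$ argument makes it strictly monotone, and the bound on $c$ rules out extra windings: $\theta^\sharp$ is therefore a genuine domain wall from $-\tfrac\pi2$ to $\tfrac\pi2$. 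One then derives a contradiction --- and this is the heart of the matter --- by showing such a $\theta^\sharp$ must coincide with $\theta_s$, so that $c=\mathfrak m$, against $c>\mathfrak m$; here the unimodality of $s$ re-enters through the sign of $H$ and the monotonicity it forces, allowing one to compare $\theta^\sharp$ and $\theta_s$ at an extremum of their difference and control the sign of the nonlinearity. Undoing the lifting, $\boldsymbol{w}_s$ is then the unique steady state of~\eqref{eq:LLG} in $\mathcal H^1_{\neq}$ of the form~\eqref{expr:SS_infinite}, and the unique minimizer of~\eqref{eq:min_prob} up to the rotations $R_\varphi$. The step I expect to be the main obstacle is exactly this last one: excluding the mountain-pass critical point, i.e.\ ruling out any critical point of $\mathcal E_s$ strictly above the minimal level.
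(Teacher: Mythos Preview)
Your reduction to the scalar problem, the strict energy gap $\mathfrak m<E_1(\theta_*)$, the monotonicity via the ``Hamiltonian'' $H=\tfrac12 s\big((\theta')^2-\cos^2\theta\big)$, and the general mountain-pass architecture are all in line with the paper. Your existence argument (concentration--compactness) differs from the paper's (explicit monotone transforms plus a localization lemma forcing the zero of $\theta_n$ into $[-a,a]$), but either route is viable. The decay estimate and the oddness-from-uniqueness are also fine variants.

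The genuine gap is in the mountain-pass contradiction. You propose to rule out the pass critical point $\theta^\sharp$ by showing it must \emph{coincide} with $\theta_s$, via a maximum-principle comparison ``at an extremum of their difference''. This is circular: coincidence of all critical points is precisely the statement you are trying to prove, and the sketched comparison does not close (the nonlinearity $\cos\theta\sin\theta$ is not monotone on $(-\tfrac\pi2,\tfrac\pi2)$, so a pointwise sign argument at an extremum of $\theta^\sharp-\theta_s$ does not give a contradiction without further structure). The paper's key observation --- which you are missing --- is that \emph{every} critical point is a \emph{strict local minimizer}: the second variation is $\langle L_1 h,h\rangle_s$ with $L_1=L_2+\big((\theta')^2-\cos^2\theta\big)\mathrm{Id}$, and your own inequality $H\ge0$ (strict on a set of positive measure since $s\not\equiv1$) makes $L_1$ strictly positive. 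A mountain-pass critical point at the pass level cannot be a strict local minimum (the near-optimal paths must pass through a neighbourhood of $\theta^\sharp$ at energy $\le c+o(1)$, contradicting the quadratic lower bound), and that is the contradiction.

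Two further points you would need to repair. First, Palais--Smale fails globally here (translated notchless walls $\theta_*(\cdot-n)$ are a counterexample); the paper proves a restricted Palais--Smale property \emph{below} $E_1(\theta_*)$, which is exactly why the strict gap matters, and you should not skip this. Second, ``taking as competing path the segment between $\theta_1$ and $\theta_2$'' does not obviously keep the maximal energy below $E_1(\theta_*)$: $\mathcal E_s$ is not convex. The paper builds a specific path by first passing through translated notchless walls and exploiting the convexity of the auxiliary density $L(y,z)=y^2/(1-z^2)$ in the variable $\psi=\cos\theta$, which guarantees $\max_\lambda \mathcal E_s(c(\lambda))<E_1(\theta_*)$ and hence places the problem in the regime where Palais--Smale holds.
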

If $s \in \mathscr{A}_a(\mathbb{R})$, we recover the existence and symmetry results for the domain wall with a new proof compared to \cite{book_carbou,carbou2018stabilization}. Concerning uniqueness, the main improvement of our result compared to~\cite{Ignat_2022} lays in the fact that we do not need to work up to translations and assume $\theta_s(0)=0$.

In fact, this hypothesis concerns both earlier results \cite{carbou2018stabilization,Ignat_2022}. We however feel that this hypothesis is not very natural and rather restrictive, both for the existence and the uniqueness questions. 

Regarding existence, assuming $\theta_0(0) =0$ corresponds to solving a shooting problem; when $s$ is non symmetric, there is no reason that a domain wall satisfy this constraint. Similarly for uniqueness, there is no reason that two domains walls reach $0$ at the same point.

Our main hypothesis is that $s$ is unimodal as mentioned in \eqref{def:classU}. This class of notch is natural both for applications and for uniqueness. Indeed, one can construct $s$ with 2 notches, such that two domain walls exist (or even simply, in the case of a periodic $s$, infinitely many domains can be obtained by translation of a period). One of the main point of this paper is that (under the unimodal assumption), the presence of a non trivial notch rules out minimizing sequences where the transition from $\mathbf{e_1}$ to $-\mathbf{e_1}$ occurs at spatial infinity, and allows to recover compactness.

\bigskip

The proof of the main theorem relies on the calculus of variations, partially inspired by \cite{Ignat_2022}, but the key part of our argument differs. Indeed, the most challenging part concerns the uniqueness property of the domain wall. This is established under the assumption of unimodality of the notch.
It involves assuming the existence of two distinct critical points and constructing a third one using the mountain-pass theorem. This construction leads to a contradiction with a previously established stability result in \cite{book_carbou,carbou2018stabilization}, thereby concluding the proof. 

The rest of the article is organized as follows: Section~\ref{sec:exists} focuses on proving the existence of a minimizer to Problem~\eqref{eq:min_prob}, with the main result detailed in Proposition~\ref{prop:strict monotony}. In Section~\ref{sec:uniqueness}, we demonstrate that the functional $E_s$ has a unique critical point on $\mathcal{H}^1_{\neq}$. The main result of this section is presented in Theorem~\ref{theo:criticalPt}.

\section{Existence of the domain wall}\label{sec:exists}

\subsection{A useful change of variables}

In the following sections, we will sometimes use the following standard change of variable for Sturm-Liouville equation (see e.g. \cite{cox1996extremal})
\begin{equation} \label{eq:def_y_change_var}
    y:\;x\;\longmapsto\;\int_0^x\frac{du}{s(u)}.
\end{equation}
This function is a ${\mathcal C}^{0,1}$-diffeomorphism on $\R$ since we have $s(x)\geq s_0 >0$.
Thus, for any function $f$, we can define a new function $g$ through this change of variable: $g(y) = f(x)$. Saying it in another way, $g \coloneqq f \circ y^{-1}$.
In particular, we also define $\sigma$ such that $\sigma(y) = s(x)$, which is constant equal to $1$ outside $(\mathfrak{a}_-, \mathfrak{a}_+)$ where $\mathfrak{a}_\pm \coloneqq y (\pm a)$.
With such a change of variable, the steady-state of \eqref{eq:LLG} gain a bit of regularity.

\begin{lemma} \label{lem:chg_var_m}
    For any steady-state $\boldsymbol{m} \in \mathcal{H}^1$ (solution of \eqref{eqLLSS:infinite}), the function $\tilde{\boldsymbol{m}} = \boldsymbol{m} \circ y^{-1} \in \mathcal{H}^1$ satisfies $\partial_{yy} \tilde{\boldsymbol{m}} \in L^2 + L^1$ and $\tilde{\boldsymbol{m}} \in \mathcal{C}^{1}$.
\end{lemma}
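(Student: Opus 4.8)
The plan is to turn \eqref{eqLLSS:infinite} into a genuine semilinear second-order equation for $\boldsymbol m$ and then transport it through the Lipschitz diffeomorphism $y$, keeping track of Jacobian factors. First I would record the change-of-variable identities: since $x = y^{-1}(\cdot)$ satisfies $\frac{\diff x}{\diff y} = \sigma$, the function $\partial_y\tilde{\boldsymbol m}$ is the $y$-representation of $s\,\partial_x\boldsymbol m$ and $\partial_{yy}\tilde{\boldsymbol m}$ that of $s\,\partial_x(s\,\partial_x\boldsymbol m)$; moreover $\int_\R\abs{\partial_y\tilde{\boldsymbol m}}^2\,\diff y = \int_\R s\,\abs{\partial_x\boldsymbol m}^2\,\diff x$ and $\int_\R\tilde m_j^2\,\diff y \le s_0^{-1}\int_\R m_j^2\,\diff x$ for $j=2,3$, which together with $\abs{\tilde{\boldsymbol m}} = 1$ a.e.\ already gives $\tilde{\boldsymbol m}\in\mathcal H^1$.

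Next, from \eqref{eqLLSS:infinite} I would extract the semilinear equation. Since $\abs{\boldsymbol m} = 1$ gives $\boldsymbol m\cdot\partial_x\boldsymbol m = 0$, one computes in the sense of distributions (pairing $\partial_x(s\,\partial_x\boldsymbol m)$ against $\psi\boldsymbol m$ for $\psi\in\mathcal D(\R)$) that $\boldsymbol m\cdot\partial_x(s\,\partial_x\boldsymbol m) = -s\,\abs{\partial_x\boldsymbol m}^2$, so the Lagrange multiplier in $\boldsymbol m\times H(\boldsymbol m) = 0$ — i.e.\ $H(\boldsymbol m) = (\boldsymbol m\cdot H(\boldsymbol m))\,\boldsymbol m$ — equals $\boldsymbol m\cdot H(\boldsymbol m) = -\abs{\partial_x\boldsymbol m}^2 - (m_2^2 + m_3^2)$. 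This yields
\begin{equation*}
    \partial_x\!\bigl(s\,\partial_x\boldsymbol m\bigr) = s\,(m_2\mathbf{e_2} + m_3\mathbf{e_3}) - s\,\bigl(\abs{\partial_x\boldsymbol m}^2 + m_2^2 + m_3^2\bigr)\,\boldsymbol m,
\end{equation*}
an identity whose right-hand side lies in $L^2(\R) + L^1(\R)\subset L^1_\textnormal{loc}(\R)$ because $s\in W^{1,\infty}$ with $s_0\le s\le 1$, $\partial_x\boldsymbol m, m_2, m_3\in L^2(\R)$ and $\abs{\boldsymbol m} = 1$; in particular $s\,\partial_x\boldsymbol m\in W^{1,1}_\textnormal{loc}(\R)$ and the equation is an equality of $L^1_\textnormal{loc}$ functions. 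Multiplying by $s$, transporting through $y$, and using $s^2\,\abs{\partial_x\boldsymbol m}^2\circ y^{-1} = \abs{\partial_y\tilde{\boldsymbol m}}^2$ together with $\tilde m_j = m_j\circ y^{-1}$ gives
\begin{equation*}
    \partial_{yy}\tilde{\boldsymbol m} = \sigma^2\,(\tilde m_2\mathbf{e_2} + \tilde m_3\mathbf{e_3}) - \sigma^2\,(\tilde m_2^2 + \tilde m_3^2)\,\tilde{\boldsymbol m} - \abs{\partial_y\tilde{\boldsymbol m}}^2\,\tilde{\boldsymbol m}.
\end{equation*}

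Finally I would read off the regularity. Since $0 < \sigma\le 1$ and $\tilde m_2,\tilde m_3\in L^2(\R)$, the first term is in $L^2(\R)$; since $\abs{\tilde{\boldsymbol m}} = 1$ and both $\tilde m_2^2 + \tilde m_3^2$ and $\abs{\partial_y\tilde{\boldsymbol m}}^2$ are in $L^1(\R)$, the last two terms are in $L^1(\R)$, hence $\partial_{yy}\tilde{\boldsymbol m}\in L^2(\R) + L^1(\R)$. Consequently $\partial_{yy}\tilde{\boldsymbol m}\in L^1_\textnormal{loc}(\R)$ and $\partial_y\tilde{\boldsymbol m}\in L^2_\textnormal{loc}(\R)\subset L^1_\textnormal{loc}(\R)$, so $\partial_y\tilde{\boldsymbol m}\in W^{1,1}_\textnormal{loc}(\R)$ has a continuous representative and $\tilde{\boldsymbol m}\in\mathcal C^1(\R)$. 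The only genuinely delicate step is the extraction of the semilinear equation: at the $\mathcal H^1$ regularity level the term $s^{-1}\partial_x(s\,\partial_x\boldsymbol m)$ in $H(\boldsymbol m)$ is a priori only a distribution, so the identification of the Lagrange multiplier must be performed in $\mathcal D'(\R)$ and justified a posteriori by the $L^1_\textnormal{loc}$ bound on the right-hand side; everything else is a routine change of variables with Hölder bookkeeping. (Alternatively, one may invoke the planarity reduction recalled around \eqref{expr:SS_infinite}–\eqref{expr:SS_infinite:div a grad formulation}, noting that $\cos\theta\in L^2(\R)$ since $m_2^2 + m_3^2 = \cos^2\theta$, and transport the scalar equation $(s\theta')' = -s\cos\theta\sin\theta$ instead.)
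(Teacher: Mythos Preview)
Your proof is correct and follows essentially the same approach as the paper: both extract the Lagrange multiplier from $\boldsymbol m\times H(\boldsymbol m)=0$ together with $\abs{\boldsymbol m}=1$ to obtain the semilinear equation $\partial_{yy}\tilde{\boldsymbol m}=\sigma^2(\tilde m_2\mathbf{e_2}+\tilde m_3\mathbf{e_3})-\tilde\Lambda\,\tilde{\boldsymbol m}$ with $\tilde\Lambda\in L^1$, then split the right-hand side into $L^2+L^1$. The only cosmetic difference is that the paper first passes to the $y$ variable and then identifies the multiplier, whereas you do it in the opposite order; your added remark on justifying the distributional manipulation at the $\mathcal H^1$ level is in fact more careful than the paper's treatment.
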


\begin{proof}
    It is easy to compute that
    \begin{align*}
        \partial_y \tilde{\boldsymbol{m}} (y) &= s(x) \partial_x \boldsymbol{m} (x), \\
        \partial_{yy} \tilde{\boldsymbol{m}} (y) &= s(x) \partial_x \Bigl( s(x) \partial_x \boldsymbol{m} \Bigr) (x), \\
        H(m) (x) &= \frac{1}{\sigma (y)^2} \partial_{yy} \tilde{\boldsymbol{m}} (y) + \tilde{\boldsymbol{m}}_2 (y) e_2 + \tilde{\boldsymbol{m}}_3 (y) e_3 = \frac{1}{\sigma (y)^2} \tilde{H} (\tilde{\boldsymbol{m}}) (y),
    \end{align*}
    where
    \begin{equation*}
        \tilde{H} (\tilde{\boldsymbol{m}}) (y) \coloneqq \partial_{yy} \tilde{\boldsymbol{m}} (y) + \sigma^2 (y) \Bigl( \tilde{m}_2 (y) e_2 + \tilde{m}_3 (y) e_3 \Bigr).
     \end{equation*}
    Thus, the equation \eqref{eqLLSS:infinite} that $\boldsymbol{m}$ satisfies implies for $\tilde{\boldsymbol{m}}$:
    \begin{equation*}
        \tilde{\boldsymbol{m}} \times \tilde{H} (\tilde{\boldsymbol{m}}) = 0.
    \end{equation*}
    Since $\abs{\tilde{\boldsymbol{m}}} = 1$, which implies that $\partial_{yy} \tilde{\boldsymbol{m}} \cdot \tilde{\boldsymbol{m}} = - \abs{\partial_y \tilde{\boldsymbol{m}}}^2$, this equation can be redrafted as
    \begin{equation} \label{eq:ODE_tilde_m}
        \tilde{H} (\tilde{\boldsymbol{m}}) = \tilde{\Lambda} \, \tilde{\boldsymbol{m}},
    \end{equation}
    where
    \begin{equation*}
        \tilde{\Lambda} = \tilde{H} (\tilde{\boldsymbol{m}}) \cdot \tilde{\boldsymbol{m}} = - \abs{\partial_y \tilde{\boldsymbol{m}}}^2 + \sigma^2 (y) \Bigl( \tilde{m}_2^2 + \tilde{m}_3^2 \Bigr) \in L^1.
    \end{equation*}
    This leads to $\tilde{\Lambda} \, \tilde{\boldsymbol{m}} \in L^1$ as $\tilde{\boldsymbol{m}} \in L^\infty$.
    Since $\sigma \in L^\infty$ and $\tilde{m}_2, \tilde{m}_3 \in L^2$, we get that $\sigma^2 (y) \Bigl( \tilde{m}_2 (y) e_2 + \tilde{m}_3 (y) e_3 \Bigr) \in L^2$, thus $\partial_{yy} \tilde{\boldsymbol{m}} \in L^1 + L^2$. The conclusion follows.
\end{proof}

\subsection{Steady-states are planar}
In this section we rewrite the arguments presented in~\cite{book_carbou,carbou2018stabilization} to obtain a problem written in term of Sturm-Liouville equation on the angle $\theta$.
\begin{lemma} \label{lem:lift_planar}
    For any $\boldsymbol{m} \in \mathcal{H}^1$ such that $m_2$ and $m_3$ are colinear in $H^1$, there exists $\varphi \in [0, 2 \pi]$ and a lifting $\theta \in \mathcal{C}^{\frac{1}{2}} (\mathbb{R})$ such that
    \begin{equation}\label{eq:def_lift}
        \boldsymbol{m}(x)=R_\varphi \begin{pmatrix}
        \sin\theta(x)\\ \cos \theta(x)\\ 0
        \end{pmatrix}.
    \end{equation}
    If $\boldsymbol{m} \in \mathcal{H}^2$, then $\theta \in \mathcal{C}^{1, \frac{1}{2}} (\mathbb{R})$.
\end{lemma}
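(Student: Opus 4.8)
The plan is to first reduce to the case $\varphi = 0$, i.e.\ to produce the angle $\theta$ for a magnetization whose plane is spanned by $\mathbf{e_1}$ and $\mathbf{e_2}$. The hypothesis that $m_2$ and $m_3$ are colinear in $H^1$ means there is a fixed unit vector $(\cos\varphi,\sin\varphi)$ and a scalar function $\rho \in H^1_{\mathrm{loc}}$ with $\rho' \in L^2$ such that $(m_2,m_3) = \rho\,(\cos\varphi,\sin\varphi)$. Applying $R_\varphi^{-1}$ then gives a magnetization of the form $(m_1, \rho, 0)$ with $m_1^2 + \rho^2 = 1$ a.e.; so it suffices to lift a map $(m_1,\rho,0)\colon \mathbb{R}\to\mathbb{S}^1$ into the $(\mathbf{e_1},\mathbf{e_2})$ plane.

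Next I would construct $\theta$ by the standard winding-number / path-lifting argument for $H^1_{\mathrm{loc}}$ maps into $\mathbb{S}^1$. Since $\boldsymbol{m}\in\mathcal{H}^1$ we have $\boldsymbol{m}\in H^1_{\mathrm{loc}}(\mathbb{R})$, which embeds into $\mathcal{C}^{1/2}_{\mathrm{loc}}(\mathbb{R})$; in particular the map $x\mapsto (m_1(x),\rho(x))$ is continuous and $\mathbb{S}^1$-valued, so it has a continuous lifting $\theta$, unique up to an additive constant in $2\pi\mathbb{Z}$, with $\sin\theta = m_1$, $\cos\theta = \rho$. The regularity $\theta\in\mathcal{C}^{1/2}$ (not merely $\mathcal{C}^{1/2}_{\mathrm{loc}}$, i.e.\ with a global H\"older seminorm) should follow because away from the points where $|m_1|$ is close to $1$ one can write $\theta = \arctan(m_1/\rho) + \text{const}$ locally and use $\boldsymbol{m}' \in L^2$, while the global control comes from $\theta'$ being controlled by $|\boldsymbol{m}'|$ wherever a local chart is valid; alternatively, observe $\theta' = m_1' \rho - \rho' m_1 \in L^1_{\mathrm{loc}}$ and in fact, since $\boldsymbol{m}'\in L^2(\mathbb{R})$ and $|\theta'|\le |\boldsymbol{m}'|$, one gets $\theta'\in L^2(\mathbb{R})$, hence $\theta\in\mathcal{C}^{1/2}(\mathbb{R})$ uniformly by Morrey/Cauchy--Schwarz. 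Finally, to pin down $\varphi\in[0,2\pi]$ one simply takes the representative of the colinearity direction in that interval.

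For the improved regularity when $\boldsymbol{m}\in\mathcal{H}^2$: here $\boldsymbol{m}\in H^2_{\mathrm{loc}}\hookrightarrow \mathcal{C}^{1,1/2}_{\mathrm{loc}}$, so $m_1,\rho\in\mathcal{C}^1$, and the identity $\theta' = m_1'\rho - \rho' m_1$ shows $\theta'$ is a product of $\mathcal{C}^{1/2}$ functions (each factor being $\mathcal{C}^1$ times $\mathcal{C}^{1/2}$, hence $\mathcal{C}^{1/2}$), so $\theta'\in\mathcal{C}^{1/2}$ and $\theta\in\mathcal{C}^{1,1/2}$. Some care is needed to get the \emph{global} (uniform) H\"older bound rather than just the local one; this uses that $\boldsymbol{m}', \boldsymbol{m}'' \in L^2(\mathbb{R})$ together with $|\boldsymbol{m}|=1$, so that $\boldsymbol{m}'$ itself is globally bounded and globally $\mathcal{C}^{1/2}$ on $\mathbb{R}$.

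\textbf{Main obstacle.} The conceptual content is minimal — it is the classical lifting of $\mathbb{S}^1$-valued Sobolev maps — so the real work is bookkeeping: carefully justifying the reduction to $\varphi=0$ from the $H^1$-colinearity hypothesis (making sure the common direction is a genuine constant and that $\rho$ inherits the right Sobolev regularity), and upgrading the H\"older estimates from local to global on all of $\mathbb{R}$ using the $L^2$ (resp.\ $H^2$) decay of $\boldsymbol{m}$ at infinity. I expect the global-versus-local H\"older subtlety to be the only point requiring genuine attention.
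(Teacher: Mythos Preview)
Your proposal is correct and in fact supplies considerably more detail than the paper's own proof, which consists of a single sentence deferring to \cite{book_carbou,carbou2018stabilization}. The approach you outline---extract the constant direction $(\cos\varphi,\sin\varphi)$ from the $H^1$-colinearity, rotate to reduce to a planar map $(m_1,\rho,0)$, lift the continuous $\mathbb{S}^1$-valued map $x\mapsto(m_1(x),\rho(x))$, and read off the regularity of $\theta$ from the identity $\theta' = m_1'\rho - \rho' m_1$ together with the Cauchy--Schwarz bound $|\theta'|\le|\boldsymbol{m}'|$---is exactly the standard argument and is presumably what the cited references carry out. Your identification of the global-versus-local H\"older bound as the only genuine subtlety is accurate, and your sketch of how to handle it (via $\theta'\in L^2(\mathbb{R})$ for the $\mathcal{C}^{1/2}$ case, and via $\boldsymbol{m}'\in H^1(\mathbb{R})\hookrightarrow \mathcal{C}^{1/2}(\mathbb{R})\cap L^\infty(\mathbb{R})$ for the $\mathcal{C}^{1,1/2}$ case) is sound.
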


\begin{proof}
    The proof follows from easy arguments presented in~\cite{book_carbou,carbou2018stabilization}.
\end{proof}

\begin{lemma} \label{lem:steadystate_planar}
    For any steady-state $\boldsymbol{m} \in \mathcal{H}^1$, there exists $\varphi \in [0, 2 \pi]$ and a lifting $\theta \in \mathcal{C}^1 (\mathbb{R})$ such that \eqref{eq:def_lift} holds.
\end{lemma}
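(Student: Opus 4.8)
The plan is to combine Lemma~\ref{lem:lift_planar} with the structural information on steady-states coming from Lemma~\ref{lem:chg_var_m}. The statement of Lemma~\ref{lem:lift_planar} requires, as hypothesis, that $m_2$ and $m_3$ be colinear in $H^1$; so the real content here is to upgrade ``$\boldsymbol m$ is a steady-state'' to ``$m_2$ and $m_3$ are colinear'', after which the lifting exists with regularity $\mathcal{C}^{1,1/2}$ (in particular $\mathcal{C}^1$) since a steady-state lies in $\mathcal{H}^2$ — indeed, as noted right after Lemma~\ref{lem:chg_var_m}, $\tilde{\boldsymbol m}\in\mathcal{C}^1$ with $\partial_{yy}\tilde{\boldsymbol m}\in L^1+L^2$, and undoing the $\mathcal{C}^{0,1}$-diffeomorphism $y$ keeps $\boldsymbol m$ in $\mathcal{H}^2$ (the weight $s$ is bounded above and below).

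First I would pass to the variable $y$ and work with $\tilde{\boldsymbol m}=\boldsymbol m\circ y^{-1}$, which by Lemma~\ref{lem:chg_var_m} is $\mathcal{C}^1$ and solves the pointwise ODE \eqref{eq:ODE_tilde_m}, namely $\partial_{yy}\tilde{\boldsymbol m}+\sigma^2(\tilde m_2 e_2+\tilde m_3 e_3)=\tilde\Lambda\,\tilde{\boldsymbol m}$ with $\tilde\Lambda=-|\partial_y\tilde{\boldsymbol m}|^2+\sigma^2(\tilde m_2^2+\tilde m_3^2)$. Projecting this identity onto $e_2$ and $e_3$ gives the two scalar equations $\tilde m_2''+\sigma^2\tilde m_2=\tilde\Lambda\,\tilde m_2$ and $\tilde m_3''+\sigma^2\tilde m_3=\tilde\Lambda\,\tilde m_3$, i.e. $\tilde m_2$ and $\tilde m_3$ both solve the \emph{same} second-order linear ODE $u''+(\sigma^2-\tilde\Lambda)u=0$ with continuous coefficient $\sigma^2-\tilde\Lambda\in L^1_{\mathrm{loc}}\cap C^0$ (here I would note $\tilde\Lambda$ is continuous because $\partial_y\tilde{\boldsymbol m}$ is). Then I would invoke the boundary conditions: since $\boldsymbol m\in\mathcal{H}^1$, one has $\tilde m_2,\tilde m_3\in L^2$, hence $\tilde m_2(y),\tilde m_3(y)\to 0$ as $|y|\to\infty$ (using $\tilde m_2'\in L^2$ as well, or a standard argument). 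A nonzero solution of a regular linear second-order ODE that vanishes at $+\infty$ spans a one-dimensional subspace of the solution space (the subdominant solution is unique up to scalar); therefore $\tilde m_2$ and $\tilde m_3$ are proportional, say $(\tilde m_2,\tilde m_3)=(\cos\varphi,\sin\varphi)\,\psi$ for some constant angle $\varphi$ and some scalar function $\psi$ — equivalently, $m_2$ and $m_3$ are colinear in $H^1$. (If both vanish identically, $\boldsymbol m\equiv\pm e_1$ and any lifting works.)

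Having established colinearity, I would apply Lemma~\ref{lem:lift_planar} to get $\varphi\in[0,2\pi]$ and a lifting $\theta\in\mathcal{C}^{1/2}$ with $\boldsymbol m=R_\varphi(\sin\theta,\cos\theta,0)^T$, and then upgrade regularity: after the rotation $R_\varphi$, the function $n\coloneqq(\sin\theta,\cos\theta,0)^T$ is itself a steady-state, so $\theta$ solves \eqref{expr:SSTheta_infinite}; since $\boldsymbol m\in\mathcal{H}^2$ (as discussed above), the ``$\mathcal{H}^2\Rightarrow\theta\in\mathcal{C}^{1,1/2}$'' clause of Lemma~\ref{lem:lift_planar} gives $\theta\in\mathcal{C}^1$, which is what is claimed. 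Alternatively, bootstrap directly: $\sin\theta=(R_{-\varphi}\boldsymbol m)_1\in\mathcal{C}^1$ and $\cos\theta=(R_{-\varphi}\boldsymbol m)_2\in\mathcal{C}^1$ force $\theta\in\mathcal{C}^1$ locally (the pair $(\sin\theta,\cos\theta)$ being $\mathcal{C}^1$ with $|{\cdot}|=1$ means $\theta'$ is continuous wherever one can locally invert, and one can always do so).

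The main obstacle is the colinearity step — specifically, making precise that the ``vanishing at infinity'' of $\tilde m_2,\tilde m_3$ forces them into a one-dimensional solution space. The subtlety is that $\tilde\Lambda$, hence the ODE coefficient, is only known a priori to be in $L^1$; one needs its local continuity (from $\partial_y\tilde{\boldsymbol m}\in\mathcal{C}^0$) to have a genuine ODE with a well-posed initial value problem and a clean Wronskian/dimension argument, and one must handle the decay at $\pm\infty$ rigorously (e.g. via the representation of solutions and the fact that a regular linear ODE has at most a one-dimensional space of solutions tending to $0$ at a given end, provided the potential decays — here $\sigma^2-\tilde\Lambda\to 1$ at infinity, so solutions behave like $e^{\pm y}$ and exactly one decaying direction exists at each end). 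Everything else is routine once Lemmas~\ref{lem:chg_var_m} and~\ref{lem:lift_planar} are in hand.
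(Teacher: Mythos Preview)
Your overall strategy---pass to the $y$ variable, extract from \eqref{eq:ODE_tilde_m} that $\tilde m_2$ and $\tilde m_3$ solve the same scalar linear ODE $u''+(\sigma^2-\tilde\Lambda)u=0$, deduce colinearity, then invoke Lemma~\ref{lem:lift_planar}---is the same as the paper's. The difference lies in how you justify the colinearity, and there your asymptotic argument has a gap.

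You write that $\sigma^2-\tilde\Lambda\to 1$ at infinity and conclude that solutions behave like $e^{\pm y}$, giving a unique decaying direction. But the ODE you wrote is $u''+(\sigma^2-\tilde\Lambda)u=0$; if the coefficient tends to $+1$, the asymptotic model is $u''+u=0$, which is \emph{oscillatory}, not hyperbolic, and there is no subdominant solution to single out. So the ``one-dimensional space of solutions vanishing at $+\infty$'' claim is not justified by the dichotomy you invoke. (Whether the sign in \eqref{eq:ODE_tilde_m} is $+\sigma^2$ or $-\sigma^2$ from the original $H(\boldsymbol m)$, your own chain of equations leads to the oscillatory case, so the $e^{\pm y}$ heuristic is inconsistent with what you wrote.)

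The paper bypasses asymptotics entirely with a Wronskian argument: one checks directly that $\partial_y(\tilde m_2\,\partial_y\tilde m_3-\tilde m_3\,\partial_y\tilde m_2)=0$ from \eqref{eq:ODE_tilde_m}, so this Wronskian is constant; since $\tilde m_2,\tilde m_3\in H^1$, the Wronskian lies in $L^1$, hence is identically zero. Then $(\tilde m_2,\partial_y\tilde m_2)$ and $(\tilde m_3,\partial_y\tilde m_3)$ are pointwise colinear in $\mathbb R^2$, and since both solve the same first-order linear system $\partial_y u=v$, $\partial_y v=(\tilde\Lambda-\sigma^2)u$, Cauchy--Lipschitz forces the colinearity factor to be constant. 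This uses only $H^1$ membership and the ODE structure, and is insensitive to the sign or the asymptotic regime of the potential. Your proof becomes correct if you replace the subdominant-solution paragraph by this Wronskian computation; the rest (regularity upgrade via Lemma~\ref{lem:lift_planar} and the $\mathcal H^2$ observation) is fine.
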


\begin{proof}
    The function $\tilde{\boldsymbol{m}}$ as defined in Lemma \ref{lem:chg_var_m} satisfies \eqref{eq:ODE_tilde_m}. 
    Let us show that it implies
    \begin{equation} \label{eq:col_expr}
        \tilde{m}_2 \partial_y \tilde{m}_3 - \tilde{m}_3 \partial_y \tilde{m}_2 = 0 \quad \text{on } \mathbb{R}.
    \end{equation}
    Indeed, using \eqref{eq:ODE_tilde_m}, there holds
    \begin{align*}
        \partial_y \Bigl( \tilde{m}_2 \partial_y \tilde{m}_3 - \tilde{m}_3 \partial_y \tilde{m}_2 \Bigr) &= \tilde{m}_2 \partial_{yy} \tilde{m}_3 - \tilde{m}_3 \partial_{yy}  m_2 \\
            &= \tilde{m}_2 ( \partial_{yy} \tilde{m}_3 + \sigma^2 (y) \tilde{m}_3 ) - \tilde{m}_3 ( \partial_{yy} \tilde{m}_2+ \sigma^2 (y) \tilde{m}_2 ) \\
            &= \tilde{m}_2 ( \tilde{\Lambda} (y) \tilde{m}_3 ) - \tilde{m}_3 ( \tilde{\Lambda} (y) \tilde{m}_2 ) \\
            &= 0,
    \end{align*}
    which shows that $\tilde{m}_2 \partial_y \tilde{m}_3 - \tilde{m}_3 \partial_y \tilde{m}_2$ is constant on $\mathbb{R}$.
    On the other hand, since $\tilde{\boldsymbol{m}} \in \mathcal{H}^1$, we have $\tilde{m}_2, \tilde{m}_3 \in H^1$, and thus $\tilde{m}_2 \partial_x \tilde{m}_3 - \tilde{m}_3 \partial_x \tilde{m}_2 \in L^1$.
    This proves \eqref{eq:col_expr}, which implies that $(\tilde{m}_2 (y), \partial_x \tilde{m}_2 (y))$ and $(\tilde{m}_3 (y), \partial_x \tilde{m}_3 (y))$ are colinear in $\mathbb R^2$ for every $y$ in $\mathbb R$.
    On the other hand, by \eqref{eq:ODE_tilde_m}, these two vector fields solve the same first order linear ODE system in $(u, v)$:
    \begin{equation*}
        \partial_x u = v, \quad \partial_x v = (1 + \tilde{\Lambda} (y)) u.
    \end{equation*}
    By uniqueness in the Cauchy-Lipschitz theorem, the colinearity factor is constant in $y$, and thus
    $m_2$ and $m_3$ are colinear. We can then use Lemma \ref{lem:lift_planar} to lift $\tilde{\boldsymbol{m}}$.
    The conclusion for $\boldsymbol{m}$ follows by defining $\theta = \tilde{\theta} \circ y$.
\end{proof}

\subsection{Energy of the lifting}

We introduce the energy for the lifting $E_s (\theta)$ defined as
\begin{equation}\label{expr:SS_infinite:Energy}
    E_s(\theta):=\frac{1}{2}\int_\R\theta'(x)^2s(x)\, \diff x
    +\frac{1}{2}\int_\R\cos^2\theta(x)\,s(x)\, \diff x.
\end{equation}
Moreover, we define the following spaces for lifting:
\begin{gather}
    {\mathscr W}:=\Big\{\theta:\R\to\R\;:\;\theta'\in L^2,\quad\cos\theta\in L^2\Big\}.
\end{gather}
The relation between the two energies is given in the following lemma.

\begin{lemma} \label{lem:rel_energies}
The energy $E_s$ is preserved by rotations with constant angle: more precisely, let $\varphi \in [0, 2 \pi]$, $R_\varphi$ the rotation matrix given by \eqref{expr:SS_infinite} and $\theta \in \mathcal{C}^1 (\mathbb{R})$ such that
    \begin{equation*} 
        \boldsymbol{m}=R_\varphi \begin{pmatrix}
        \sin\theta\\ \cos \theta\\ 0
        \end{pmatrix} \in \mathcal{H}^1.
    \end{equation*}
    Then $\theta \in \mathscr W$ and $E_s (\boldsymbol{m}) = E_s (\theta)$.
\end{lemma}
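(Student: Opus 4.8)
The plan is to establish the two assertions of Lemma~\ref{lem:rel_energies} — first that $\theta \in \mathscr W$, and second that the two energies agree — by direct computation, exploiting that $R_\varphi$ is an orthogonal matrix fixing $\mathbf{e_1}$ and acting as a planar rotation on the $(\mathbf{e_2},\mathbf{e_3})$-components. First I would write out $\boldsymbol m$ in coordinates: $m_1 = \sin\theta$, and $(m_2, m_3) = (\cos\varphi \cos\theta, \sin\varphi \cos\theta)$, so that $m_2^2 + m_3^2 = \cos^2\theta$. This immediately identifies the anisotropy term: $\int_\R (m_2^2 + m_3^2) s\,\diff x = \int_\R \cos^2\theta \, s\,\diff x$.

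For the gradient term, since $R_\varphi$ is a constant matrix I have $\partial_x \boldsymbol m = R_\varphi \, \partial_x (\sin\theta, \cos\theta, 0)^T = R_\varphi (\cos\theta \, \theta', -\sin\theta\, \theta', 0)^T$, and because $R_\varphi$ is orthogonal, $|\partial_x \boldsymbol m|^2 = (\cos\theta\,\theta')^2 + (\sin\theta\,\theta')^2 = (\theta')^2$. Hence $\int_\R |\partial_x \boldsymbol m|^2 s\,\diff x = \int_\R (\theta')^2 s\,\diff x$. Adding the two contributions and dividing by $2$ gives $E_s(\boldsymbol m) = E_s(\theta)$ exactly as in \eqref{expr:SS_infinite:Energy_m} versus \eqref{expr:SS_infinite:Energy}. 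The membership $\theta \in \mathscr W$ then follows because $\boldsymbol m \in \mathcal H^1$ means precisely $\partial_x \boldsymbol m, m_2, m_3 \in L^2(\R)$; combined with $s \ge s_0 > 0$, the identities $|\partial_x \boldsymbol m|^2 = (\theta')^2$ and $m_2^2+m_3^2 = \cos^2\theta$ give $\int_\R (\theta')^2 \, \diff x \le s_0^{-1}\int_\R (\theta')^2 s \,\diff x < \infty$ and likewise $\cos\theta \in L^2(\R)$, which is the definition of $\mathscr W$.

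There is essentially no serious obstacle here; the only point requiring a little care is justifying that the chain rule $\partial_x(\sin\theta, \cos\theta, 0) = (\cos\theta, -\sin\theta, 0)\theta'$ is valid at the level of weak derivatives given only $\theta \in \mathcal C^1(\R)$ (which is assumed in the statement), so that all manipulations are legitimate pointwise and the integrals make sense; since $\theta \in \mathcal C^1$, this is immediate and the computation above is entirely rigorous. I would also remark that the computation is symmetric — one can run it backwards — so that the lemma in fact shows the correspondence $\boldsymbol m \leftrightarrow \theta$ is an energy-preserving bijection between $\{\boldsymbol m \in \mathcal H^1 : \boldsymbol m = R_\varphi(\sin\theta,\cos\theta,0)^T\}$ and $\mathscr W$, which is how it will be used later. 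I expect the proof in the paper to be just a couple of lines of this direct calculation.
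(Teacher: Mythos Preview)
Your proof is correct and is exactly the direct computation one expects; the paper in fact does not write out a proof for this lemma at all, treating the identity $E_s(\boldsymbol m)=E_s(\theta)$ as immediate from the orthogonality of $R_\varphi$ and the relations $m_2^2+m_3^2=\cos^2\theta$, $|\partial_x\boldsymbol m|^2=(\theta')^2$. Your expectation that the argument is ``just a couple of lines'' is precisely borne out.
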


This relation explains why we took the same notation for both energies, as there will be no confusion between them.
Furthermore, the limits of $\theta$ at $\pm \infty$ are related to the limits of $\boldsymbol{m}$ at $\pm \infty$. For instance, if $\lim_{\pm \infty} \boldsymbol{m} = \pm \mathbf{e_1}$, then there exists $k_\pm \in \Z$ such that
\begin{equation*}
    \lim_{\pm \infty} \theta = \pm \frac{\pi}{2} + 2 \pi k_\pm.
\end{equation*}
Since $\theta$ is defined up to a $2 \pi \mathbb{Z}$ additive constant, we can assume that $k_- = 0$, so that $\lim_{- \infty} \theta = - \frac{\pi}{2}$.
Moreover, $\theta$ can be changed in $- \pi - \theta$ by a rotation around $\mathbf{e_1}$ of angle $\pi$, so we can also assume that $k_+ \in \N$.
This is why we also define the following functional space for lifting:
\begin{equation*}
    {\mathscr W}_{\neq} \coloneqq \{ \theta \in \mathscr W \; : \; \lim_{- \infty} \theta = - \frac{\pi}{2} \text{ and } \lim_{+ \infty} \theta \in \frac{\pi}{2} + 2 \pi \mathbb{N} \}.
\end{equation*}

From the previous properties, the minimization of the energy $E_s (\boldsymbol{m})$ is related to the minimization of the energy of the lifting $E_s (\theta)$.

\begin{lemma}
    If $\boldsymbol{m}_0$ is a critical point of $E_s (\boldsymbol{m})$ in $\mathcal{H}^1_{\neq}$, then there exists $\theta_0 \in \mathscr W_{\neq}$ and $\varphi_0 \in \R$ such that
    \begin{itemize}
        \item $\theta_0$ is a lifting of $\boldsymbol{m}_0$:
        \begin{equation} \label{eq:def_lift_0}
            \boldsymbol{m}_0=R_{\varphi_0} \begin{pmatrix}
            \sin\theta_0\\ \cos \theta_0\\ 0
            \end{pmatrix},
        \end{equation}
        \item $\theta_0$ is a critical point of $E_s (\theta)$ in $\mathscr W_{\neq}$,
        \item If $\boldsymbol{m}_0$ is a minimizer of $E_s (\boldsymbol{m})$, then $\theta_0$ is a minimizer of $E_s (\theta)$.
    \end{itemize}
    Conversely, if $\theta_0$ minimizes $E_s (\theta)$ in ${\mathscr W}_{\neq}$, then, for any $\varphi_0 \in \mathbb{R}$, $\boldsymbol{m}_0$ defined by \eqref{eq:def_lift_0} is a minimizer of $E_s (\boldsymbol{m})$ in $\mathcal{H}^1_{\neq}$.
\end{lemma}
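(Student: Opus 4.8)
The plan is to transport the variational structure of $E_s(\boldsymbol{m})$ across the lifting correspondence of Lemmas~\ref{lem:lift_planar}--\ref{lem:rel_energies}, handling the direct implication first and then the converse.

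\textbf{Direct implication.} Critical points of $E_s$ in $\mathcal{H}^1_{\neq}$ are exactly the steady states, i.e.\ the solutions of~\eqref{eqLLSS:infinite} (the equivalence recalled earlier). So if $\boldsymbol{m}_0$ is such a critical point, Lemma~\ref{lem:steadystate_planar} yields $\varphi_0\in[0,2\pi]$ and a $\mathcal C^1$ lifting with $\boldsymbol{m}_0=R_{\varphi_0}(\sin\theta,\cos\theta,0)$. Since $R_{\varphi_0}$ fixes $\mathbf{e_1}$, the conditions $\lim_{\pm\infty}\boldsymbol{m}_0=\pm\mathbf{e_1}$ force $\sin\theta(\pm\infty)=\pm 1$; as discussed after Lemma~\ref{lem:rel_energies}, subtracting a constant in $2\pi\Z$ and, if necessary, replacing $(\theta,\varphi_0)$ by $(-\pi-\theta,\varphi_0+\pi)$ -- operations that change neither $\boldsymbol{m}_0$, nor the equation, nor the energy -- we may arrange $\lim_{-\infty}\theta=-\tfrac{\pi}{2}$ and $\lim_{+\infty}\theta\in\tfrac{\pi}{2}+2\pi\N$. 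Lemma~\ref{lem:rel_energies} then provides $\theta_0:=\theta\in\mathscr W_{\neq}$ together with $E_s(\boldsymbol{m}_0)=E_s(\theta_0)$. Inserting the ansatz into $\boldsymbol{m}_0\times H(\boldsymbol{m}_0)=0$ (exactly as in the subsection on planar steady states) recasts~\eqref{eqLLSS:infinite} into~\eqref{expr:SS_infinite:div a grad formulation} for $\theta_0$, whose weak form $\int_\R s\,\theta_0'\varphi'=\int_\R s\cos\theta_0\sin\theta_0\,\varphi$ for $\varphi\in\mathcal D(\R)$ is precisely the criticality condition $DE_s(\theta_0)[\varphi]=0$; if one wants this against the full tangent space of $\mathscr W_{\neq}$ at $\theta_0$, a density argument using $\theta_0'\in L^2$, $\cos\theta_0\in L^2$ and $0<s\le 1$ (for the continuity of the two forms involved) closes the gap. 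Hence $\theta_0$ is a critical point of $E_s(\theta)$ in $\mathscr W_{\neq}$. Finally, any $\theta\in\mathscr W_{\neq}$ produces through~\eqref{eq:def_lift_0} a map $\boldsymbol{m}\in\mathcal{H}^1_{\neq}$ with $E_s(\boldsymbol{m})=E_s(\theta)$ (direct computation, $R_{\varphi}$ being an isometry fixing $\mathbf{e_1}$, so $\abs{\partial_x\boldsymbol{m}}=\abs{\theta'}$ and $m_2^2+m_3^2=\cos^2\theta$); consequently, if $\boldsymbol{m}_0$ minimizes $E_s$ then $E_s(\theta_0)=E_s(\boldsymbol{m}_0)\le E_s(\boldsymbol{m})=E_s(\theta)$ for all $\theta\in\mathscr W_{\neq}$, i.e.\ $\theta_0$ minimizes $E_s(\theta)$.

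\textbf{Converse implication.} Let $\theta_0$ minimize $E_s(\theta)$ in $\mathscr W_{\neq}$ and fix $\varphi_0\in\R$. The same direct computation gives $\boldsymbol{m}_0:=R_{\varphi_0}(\sin\theta_0,\cos\theta_0,0)\in\mathcal{H}^1_{\neq}$ with $E_s(\boldsymbol{m}_0)=E_s(\theta_0)$, so it only remains to prove $E_s(\boldsymbol{m})\ge E_s(\boldsymbol{m}_0)$ for an arbitrary, possibly non-planar, competitor $\boldsymbol{m}\in\mathcal{H}^1_{\neq}$. Symmetrize it to $\widehat{\boldsymbol{m}}:=\bigl(m_1,\sqrt{m_2^2+m_3^2},0\bigr)$: this is $\mathbb{S}^2$-valued, belongs to $H^1_{\mathrm{loc}}$ (because $v\mapsto\abs v$ is $1$-Lipschitz), keeps the endpoints $\pm\mathbf{e_1}$, and by Cauchy--Schwarz $\bigl|\partial_x\sqrt{m_2^2+m_3^2}\bigr|\le\abs{\partial_x(m_2,m_3)}$ a.e.\ while $\widehat m_2^2+\widehat m_3^2=m_2^2+m_3^2$; hence $\widehat{\boldsymbol{m}}\in\mathcal{H}^1_{\neq}$ and $E_s(\widehat{\boldsymbol{m}})\le E_s(\boldsymbol{m})$. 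Since $\widehat{\boldsymbol{m}}$ takes values in the half great circle $\{p\in\mathbb{S}^2:p_3=0,\ p_2\ge 0\}$, Lemma~\ref{lem:lift_planar} supplies a lift $\widehat\theta$ with $\cos\widehat\theta\ge 0$, and one checks that $\widehat\theta'=\widehat m_1'\widehat m_2-\widehat m_2'\widehat m_1\in L^2$ with $\abs{\widehat\theta'}=\abs{\partial_x\widehat{\boldsymbol{m}}}$ and $\cos^2\widehat\theta=\widehat m_2^2$ a.e., so $\widehat\theta\in\mathscr W$ and $E_s(\widehat\theta)=E_s(\widehat{\boldsymbol{m}})$; moreover $\widehat\theta$ is continuous with $\cos\widehat\theta\ge 0$ everywhere, so its two limits (necessarily in $\pm\tfrac{\pi}{2}+2\pi\Z$) lie in the same period and, after a shift, $\widehat\theta\in\mathscr W_{\neq}$. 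Chaining the inequalities, $E_s(\boldsymbol{m})\ge E_s(\widehat{\boldsymbol{m}})=E_s(\widehat\theta)\ge E_s(\theta_0)=E_s(\boldsymbol{m}_0)$, which is the desired conclusion.

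\textbf{Main difficulty.} Apart from this last step, everything is bookkeeping resting on Lemmas~\ref{lem:lift_planar}--\ref{lem:rel_energies}. The genuinely non-trivial point is the symmetrization $\boldsymbol{m}\mapsto\widehat{\boldsymbol{m}}$: one must see that it cannot increase the energy nor spoil the boundary data, and then verify that the low-regularity lift given by Lemma~\ref{lem:lift_planar} actually lies in $\mathscr W_{\neq}$ with the same energy -- in particular that $\abs{\widehat\theta'}=\abs{\partial_x\widehat{\boldsymbol{m}}}$ and $\cos^2\widehat\theta=\widehat m_2^2$ hold almost everywhere even on the degeneracy set $\{m_2^2+m_3^2=0\}$ (where both $\partial_x\widehat{\boldsymbol{m}}$ and $\widehat\theta'$ vanish a.e.). The density argument promoting the weak Euler--Lagrange identity from $\mathcal D(\R)$ to the whole tangent space of $\mathscr W_{\neq}$ is the only other point needing a word of care.
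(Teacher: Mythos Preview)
Your proof is correct. The direct implication follows the paper's line almost verbatim. For the converse, however, you take a genuinely different route: you symmetrize globally via $\widehat{\boldsymbol{m}}=(m_1,\sqrt{m_2^2+m_3^2},0)$ and then lift the whole map at once. The paper instead picks a point $x_0$ with $m_1(x_0)\in(-1,1)$, isolates a maximal interval $(a,b)\ni x_0$ on which $m_1\in(-1,1)$, writes $\boldsymbol{m}$ in spherical coordinates $(\theta,\varphi)$ there so that $\abs{\boldsymbol{m}'}^2=(\theta')^2+(\varphi')^2\cos^2\theta$, and builds the planar competitor by setting $\varphi\equiv 0$ on $(a,b)$ and making it constant $\pm\mathbf{e_1}$ outside. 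Your global symmetrization is shorter and avoids the interval bookkeeping, but it forces you to handle the degeneracy set $\{m_2^2+m_3^2=0\}$ (which you do correctly); the paper's localization sidesteps this entirely since on $(a,b)$ that set is empty by construction, and the explicit decomposition $(\theta')^2+(\varphi')^2\cos^2\theta$ makes the energy drop completely transparent.
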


\begin{proof}
    The conclusions of Lemma \ref{lem:steadystate_planar} hold for any critical point (which is a steady state for \eqref{eq:LLG}), and we can assume $\theta_0 \in \mathscr{W}_{\neq}$ with the discussion above.
    Moreover, it is easy to verify that this $\theta_0$ is a critical point of $E_s (\theta)$ in $\mathscr{W}_{\neq}$, or even a minimizer if $\boldsymbol{m}_0$ is a minimizer for $E_s (\boldsymbol{m})$. 
    Conversely, a minimizer $\theta_0$ of $E_s (\theta)$ would be a minimizer of $E_s (\boldsymbol{m})$ for $\boldsymbol{m}$ which can be lifted as \eqref{eq:def_lift}. 
    We show now that, for any $\boldsymbol{m} \in \mathcal{H}^1_{\neq}$, we can find some $\tilde{\boldsymbol{m}} \in \mathcal{H}^1_{\neq}$ such that $\tilde{m}_3 = 0$ and $E_s (\tilde{\boldsymbol{m}}) \leq E_s (\boldsymbol{m})$. Such a $\tilde{\boldsymbol{m}}$ can be lifted by some $\tilde \theta \in \mathscr{W}_{\neq}$, so that $E_s (\tilde{\boldsymbol{m}}) = E_s (\tilde \theta) \geq E_s (\theta_0)$, and thus $E_s (\boldsymbol{m}) \geq E_s (\boldsymbol{m}_0)$ where $\boldsymbol{m}_0$ is defined by \eqref{eq:def_lift_0}, which is a sufficient property for the second part of the result to hold true.

    With the different limits of $\boldsymbol{m}$ at $\pm \infty$ and its continuity (since $\mathcal{H}^1 \subset \mathcal{C}^{0, \frac{1}{2}}$, we can easily prove that there exists $x_0 \in \mathbb{R}$ such that
    \begin{itemize}
        \item $\boldsymbol{m} (x_0) \not \in \{ - \mathbf{e_1}, \mathbf{e_1} \}$,
        \item $\boldsymbol{m} (a) = - \mathbf{e_1}$ if $a \coloneqq \sup \{ x < x_0, \boldsymbol{m} (x) \not \in \{ - \mathbf{e_1}, \mathbf{e_1} \} \} > - \infty$,
        \item $\boldsymbol{m} (b) = \mathbf{e_1}$ if $b \coloneqq \inf \{ x > x_0, \boldsymbol{m} (x) \not \in \{ - \mathbf{e_1}, \mathbf{e_1} \} \} < + \infty$.
    \end{itemize}
    From the regularity of $\boldsymbol{m}$, we also know that $- \infty \leq a < x_0$ and $x_0 < b \leq + \infty$ and $m_1 (x) \in (- 1, 1)$ for all $x \in (a, b)$.
    So we can define $\theta \coloneqq \arcsin m_1 \in \mathcal{C}^{0, \frac{1}{2}} ((a, b), (- \frac{\pi}{2}, \frac{\pi}{2}))$.
    As $\abs{m}^2 = 1$, we know that $\Bigl( \frac{m_2}{\cos \theta} \Bigr)^2 + \Bigl( \frac{m_3}{\cos \theta} \Bigr)^2 = 1$.
    Since $m_2$, $m_3$ and $\cos \theta$ are $\mathcal{C}^{0,\frac{1}{2}}$ function on $(a, b)$ and the latter does not vanish on this interval, we can find $\varphi \in \mathcal{C}^{0,\frac{1}{2}} ((a, b), \mathbb{R})$ such that $\cos \varphi = \frac{m_2}{\cos \theta}$ and $\sin \varphi = \frac{m_3}{\cos \theta}$.
    In consequence, with this definition of $\theta$ and $\varphi$, there holds for all $x \in \mathbb{R}$
    \begin{equation*}
        \boldsymbol{m} (x) = 
            \begin{pmatrix}
                \sin \theta (x) \\
                \cos \theta (x) \, \cos \varphi (x) \\
                \cos \theta (x) \, \sin \varphi (x)
            \end{pmatrix}.
    \end{equation*}
    We also point out that for a.e. $x \in (a, b)$
    \begin{align*}
        \boldsymbol{m}' (x) &= \theta' (x)
            \begin{pmatrix}
                \cos \theta (x) \\
                - \sin \theta (x) \, \cos \varphi (x) \\
                - \sin \theta (x) \, \sin \varphi (x)
            \end{pmatrix}
            + \varphi' (x)
            \begin{pmatrix}
                0 \\
                - \cos \theta (x) \, \sin \varphi (x) \\
                \cos \theta (x) \, \cos \varphi (x)
            \end{pmatrix}, \\
        \abs{\boldsymbol{m}' (x)}^2 &= (\theta' (x))^2 + (\varphi' (x))^2 \cos^2 \theta (x) + 2 \theta' (x) \, \varphi' (x) \, 
            \begin{pmatrix}
                \cos \theta (x) \\
                - \sin \theta (x) \, \cos \varphi (x) \\
                - \sin \theta (x) \, \sin \varphi (x)
            \end{pmatrix}
            \cdot
            \begin{pmatrix}
                0 \\
                - \cos \theta (x) \, \sin \varphi (x) \\
                \cos \theta (x) \, \cos \varphi (x)
            \end{pmatrix} \\
            &= (\theta' (x))^2 + (\varphi' (x))^2 \cos^2 \theta (x).
    \end{align*}
    Therefore,
    \begin{align*}
        E_s (\boldsymbol{m}) &\geq \int_a^b (\abs{\boldsymbol{m}' (x)}^2 + m_2 (x)^2 + m_3 (x)^2) s(x) \diff x \\
            &\geq \int_a^b \Bigl( (\theta' (x))^2 + (\varphi' (x))^2 \cos^2 \theta (x) + \cos^2 \theta (x) \Bigr) s (x) \diff x.
    \end{align*}
    In particular, $\theta' \in L^2 ((a, b))$.
    Now, we define
    \begin{equation*}
        \tilde{\boldsymbol{m}} (x) =
        \begin{cases}
            \begin{pmatrix}
                \sin \theta (x) \\
                \cos \theta (x) \\
                0
            \end{pmatrix} & \text{ if } x \in (a, b) \\
            - \mathbf{e_1} & \text{ if } - \infty < x \leq a, \\
            \mathbf{e_1} & \text{ if } b \leq x < + \infty.
        \end{cases}
    \end{equation*}
    From the regularity of $\theta$ and the (possible) continuous junction(s) at $a$ and $b$, we can easily see that $\tilde{\boldsymbol{m}} \in \mathcal{H}^1_{\neq}$ and
    \begin{align*}
        E_s (\tilde{\boldsymbol{m}}) &= \int_a^b \Bigl( \theta' (x)^2 + \cos^2 \theta (x) \Bigr) s (x) \diff x \\
            &\leq \int_a^b \Bigl( \theta' (x)^2 + (\varphi' (x))^2 \cos^2 \theta (x) + \cos^2 \theta (x) \Bigr) s (x) \diff x \\
            &\leq E_s (\boldsymbol{m}).
    \end{align*}
    The conclusion holds.
\end{proof}

\subsection{Existence of a domain wall}

\subsubsection{Transforms of function}

We know that steady states $\boldsymbol{m} \in \mathcal{H}^1_{\neq}$ of \eqref{eq:LLG} are critical points of $E_s (\boldsymbol{m})$.
On the other hand, they are also planar (Lemma \ref{lem:lift_planar}) and can be lifted with some function $\theta \in \mathscr{W}_{\neq}$ such that $E_s (\boldsymbol{m}) = E_s (\theta)$.
Therefore any steady state is related to a lifting $\theta$ which is a critical point for $E_s (\theta)$ in $\mathscr{W}_{\neq}$.

Our problem has thus been reduced to finding the critical points of the energy $E_s (\theta)$ on $\mathscr{W}_{\neq}$, but we can reduce even more this set.
For this, we define the set $W$ defined by
\begin{equation*} 
    {\mathscr W}_0:=\Big\{\theta\in{\mathscr W}\;:\;\;\lim\limits_{x\to\pm\infty}\theta(x)=\pm\frac{\pi}{2} \Big\} \subset \mathscr{W}_{\neq}.
\end{equation*}
\begin{equation} \label{eq:def_W}
    W:=\Big\{\theta\in{\mathscr W}_{\neq} \;:\; \overline{\theta (\R)}\subseteq [-\pi/2,\pi/2] \Big\} \subset \mathscr{W}_{\neq}.
\end{equation}
We must point out that the limit at $\pm\infty$ of any function in $W$ is completely determined.

\begin{lemma}
    There holds $W \subset {\mathscr W}_0$.
\end{lemma}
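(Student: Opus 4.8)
The plan is to show that any $\theta \in W$ actually has limits $\pm\pi/2$ at $\pm\infty$, not merely $\pi/2 + 2\pi\mathbb{N}$ at $+\infty$. Since $\theta \in W \subset \mathscr{W}_{\neq}$, by definition $\lim_{-\infty}\theta = -\pi/2$ already, so the only thing to check is the limit at $+\infty$. We know $\lim_{+\infty}\theta = \pi/2 + 2\pi k_+$ for some $k_+ \in \mathbb{N}$, and we must rule out $k_+ \geq 1$. The extra information available is the constraint $\overline{\theta(\mathbb{R})} \subseteq [-\pi/2, \pi/2]$. This is the crux: if $\theta$ takes all its values in $[-\pi/2,\pi/2]$, then its limit at $+\infty$, being a limit of values in this closed interval, must itself lie in $[-\pi/2,\pi/2]$; but $\pi/2 + 2\pi k_+ \in [-\pi/2,\pi/2]$ forces $k_+ = 0$.

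First I would note that the existence of the limit $\lim_{+\infty}\theta$ is guaranteed because $\theta \in \mathscr{W}_{\neq}$ (this is part of the definition of $\mathscr{W}_{\neq}$, or follows from $\cos\theta \in L^2$ together with $\theta' \in L^2$ as discussed in the paragraph preceding the definition of $\mathscr{W}_{\neq}$). Call this limit $L$. Then, since every value $\theta(x)$ lies in the closed set $[-\pi/2,\pi/2]$ and $[-\pi/2,\pi/2]$ is closed, we get $L \in [-\pi/2,\pi/2]$. Combining with $L = \pi/2 + 2\pi k_+$ and $k_+ \in \mathbb{N}$ gives $k_+ = 0$, hence $L = \pi/2$. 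Together with $\lim_{-\infty}\theta = -\pi/2$, this is exactly the condition defining $\mathscr{W}_0$, so $\theta \in \mathscr{W}_0$.

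This argument is essentially a one-line observation, so I do not anticipate a genuine obstacle; the only subtlety is making sure the limit at $+\infty$ genuinely exists (so that talking about "the limit lying in $[-\pi/2,\pi/2]$" is legitimate), which is why invoking membership in $\mathscr{W}_{\neq}$ — where this limit is part of the data — is the cleanest route. If one instead wanted to avoid referencing $\mathscr{W}_{\neq}$, one could argue directly: $\cos^2\theta \in L^1$ forces $\theta$ to be asymptotically close to $\pi/2 + \pi\mathbb{Z}$ near $+\infty$, and the $L^2$ control on $\theta'$ prevents oscillation between distinct such values, pinning down a unique limit; then the range constraint selects $\pi/2$. Either way the range condition $\overline{\theta(\mathbb{R})} \subseteq [-\pi/2,\pi/2]$ is what does the real work.
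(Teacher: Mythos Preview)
Your proposal is correct and matches the paper's own proof almost exactly: both use that membership in $\mathscr{W}_{\neq}$ forces $\lim_{+\infty}\theta \geq \pi/2$, and then the range constraint $\overline{\theta(\mathbb{R})}\subseteq[-\pi/2,\pi/2]$ gives the reverse inequality. The paper's version is just the terse two-line form of your argument.
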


\begin{proof}
    For any $\theta \in \mathscr{W}_{\neq}$, we know that $\lim_{+ \infty} \theta$ exists and satisfies $\lim_{+ \infty} \theta \geq \frac{\pi}{2}$. Thus, if $\overline{\theta (\R)}\subset [-\pi/2,\pi/2]$, the opposite inequality is obvious, which yields the equality.
\end{proof}

Several transforms which can decrease this functional are at our disposal.
In particular, the threshold function $T (x):=\max\{-\pi/2\,;\,\min\{x;\pi/2\}\}$, which is a $1$-Lipschitz function, constant outside $[-\pi/2,\pi/2]$, allows us to reduce our search of a minimizer in $W$.

\begin{lemma} \label{lem:min_in_W}
    For any $\theta \in \mathscr{W}_{\neq}$, we define the function $\theta_\natural \coloneqq T \circ \theta$. Then $\theta_\natural \in W$ and $E_s (\theta_\natural) \leq E_s (\theta)$. Moreover, if $\theta \not\in W$, this inequality is strict.
\end{lemma}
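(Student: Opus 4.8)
The plan is to verify three things in turn: that $\theta_\natural := T\circ\theta$ belongs to $W$, that the energy does not increase, and that the decrease is strict as soon as $\theta\notin W$. Only the last point requires real care.

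First I would collect the elementary properties of $\theta_\natural$. Since $T$ is $1$-Lipschitz and $\theta$ is continuous (being in $H^1_{\mathrm{loc}}(\R)$), the chain rule for Lipschitz maps composed with Sobolev functions gives $\theta_\natural\in H^1_{\mathrm{loc}}(\R)$ with $\theta_\natural'=\mathbbm{1}_{\{|\theta|<\pi/2\}}\,\theta'$ a.e., so $|\theta_\natural'|\le|\theta'|\in L^2$. Pointwise, $\cos^2\theta_\natural=\cos^2(T\circ\theta)$ equals $\cos^2\theta$ where $|\theta|\le\pi/2$ and equals $\cos^2(\pm\pi/2)=0\le\cos^2\theta$ where $|\theta|>\pi/2$; hence $|\cos\theta_\natural|\le|\cos\theta|\in L^2$ and $\theta_\natural\in\mathscr W$. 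Moreover $\theta_\natural(\R)\subseteq[-\pi/2,\pi/2]$ by definition of $T$ (so $\overline{\theta_\natural(\R)}\subseteq[-\pi/2,\pi/2]$, the interval being closed), and continuity of $T$ together with $\lim_{-\infty}\theta=-\pi/2$ and $\lim_{+\infty}\theta\in\frac\pi2+2\pi\N$ yields $\lim_{-\infty}\theta_\natural=-\pi/2$ and $\lim_{+\infty}\theta_\natural=\pi/2$. Therefore $\theta_\natural\in W$ (indeed $\theta_\natural\in\mathscr W_0$).

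Multiplying the two a.e. inequalities $\theta_\natural'^2\le\theta'^2$ and $\cos^2\theta_\natural\le\cos^2\theta$ by $s\ge s_0>0$ and integrating gives at once $E_s(\theta_\natural)\le E_s(\theta)$. For the strict inequality, set $A:=\{x\in\R:|\theta(x)|>\pi/2\}$, which is open by continuity and nonempty precisely when $\theta\notin W$. On $A$ one has $\theta_\natural'=0$ a.e. and $\cos^2\theta_\natural=0$, so
\[
    E_s(\theta)-E_s(\theta_\natural)\ \ge\ \frac{s_0}{2}\int_A\bigl(\theta'(x)^2+\cos^2\theta(x)\bigr)\,dx,
\]
and it suffices to show this integral is positive, i.e. to rule out that simultaneously $\theta'=0$ a.e. on $A$ and $\cos^2\theta=0$ a.e. on $A$. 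I would do this by contradiction: under that assumption, $\theta$ is constant on each connected component $C=(\alpha,\beta)$ of $A$, with value $c_C$ satisfying $\cos^2 c_C=0$ and $|c_C|>\pi/2$, hence $|c_C|\ge 3\pi/2$. If $\alpha$ is finite then $\alpha\notin A$, so $|\theta(\alpha)|\le\pi/2$, contradicting $\theta(\alpha)=c_C$ by continuity; likewise $\beta$ cannot be finite. Hence $C=\R$, so $\theta$ is constant on $\R$, contradicting $\lim_{-\infty}\theta=-\pi/2\ne c_C$. This yields the strict inequality.

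The only genuine obstacle is this last contradiction argument: the crude pointwise comparison does not by itself exclude the degenerate configuration in which $\theta$ is, on part of $A$, a constant odd multiple of $\pi/2$ strictly exceeding $\pi/2$ in absolute value, on which neither term of the energy strictly decreases. Excluding it is exactly where the definition of $\mathscr W_{\neq}$ — and in particular the prescribed limit $-\pi/2$ at $-\infty$ — enters, through the analysis of the connected components of $A$.
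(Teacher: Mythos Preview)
Your proof is correct and follows essentially the same route as the paper: truncate by the $1$-Lipschitz map $T$, compare the two integrands pointwise, and for strictness show that $\theta$ cannot be constant on the open set $A=\{|\theta|>\pi/2\}$. The only cosmetic difference is that the paper gets strictness from the gradient term alone (arguing $\int_A|\theta'|^2>0$ since each component of $A$ has a finite endpoint where $|\theta|=\pi/2$), whereas you bound the full integrand $\theta'^2+\cos^2\theta$ on $A$ and rule out the degenerate case by the same component argument; your version is slightly more explicit.
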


\begin{proof}
    Let $\theta \in{\mathscr W}_{\neq}$. If $\overline{\theta (\R)}\subset [-\pi/2,\pi/2]$, then $\theta_\natural = \theta$ and $\lim_{+ \infty} \theta = \frac{\pi}{2}$, and the properties are easy.
    Assume now that $\overline{\theta (\R)}\supsetneq[-\pi/2,\pi/2]$. Then,
    \begin{equation*}
        I_{\theta}:=\Big\{x\in\R\;:\;\theta (x)\notin\Big[-\frac{\pi}{2};\frac{\pi}{2}\Big]\Big\}
    \end{equation*}
    is non-empty.
    We recall that $\theta$ is an absolutely continuous function as a consequence of the definition of ${\mathscr W}_{\neq}$. 
    Since there exists $x \in \R$ such that $\theta (x) \in (-\pi/2,\pi/2)$ for some value of $x$, this function cannot be identically constant on the set $I_\theta$. 
    Thus,
    \begin{equation}\label{Odin}
        \int_{I_{\theta}}\big|\theta'(x)\big|^2\diff x>0.
    \end{equation}
    By definition of $I_{\theta}$ and $T$, $\theta_\natural$ is constant on $I_\theta$. On the other hand, on $\R\setminus I_{\theta}$, $T\circ\theta = \theta$. Therefore
    \begin{align*}
        \int_\R \abs{\theta_\natural'(x)}^2s(x) \diff x &= \int_{\R\setminus I_{\theta}} \abs{\theta_\natural'(x)}^2 s(x) \diff x \\
            &= \int_{\R\setminus I_{\theta}} \abs{(T\circ\theta)'(x)}^2 s(x) \diff x \\
            &= \int_{\R\setminus I_{\theta}} \abs{\theta'(x)}^2s(x) \diff x.
    \end{align*}
    Combined with~\eqref{Odin}, we get
    \begin{equation}\label{Thor}
        \int_\R \abs{\theta_\natural'(x)}^2s(x) \diff x < \int_{\R} \abs{\theta'(x)}^2s(x) \diff x.
    \end{equation}
    Moreover, since the function $x\mapsto \cos^2(x)$ is minimal at $x=\pm\pi/2$, then by property of the function $T$:
    \begin{equation}\label{Loki}
        \int_\R \cos^2 \big(T \circ \theta (x) \big) s(x) \diff x \leq \int_\R \cos^2 \big( \theta (x) \big) s(x) \diff x.
    \end{equation}
    Therefore,~\eqref{Thor} and~\eqref{Loki} imply that $f\circ\theta_1\in{\mathscr W}$ and 
    \begin{equation*}
        E_s\big(f\circ\theta_1\big) < E_s\big(\theta_1).
    \end{equation*}
    Last, the property of $T$ shows that $\theta_\natural (\R) \subset [-\pi/2,\pi/2]$ and $\lim_{\pm \infty} \theta = \pm \frac{\pi}{2}$. Thus, $\overline{\theta_\natural (\R)} = [-\pi/2,\pi/2]$.
\end{proof}

Therefore, we can only consider functions $\theta \in W$.
Let us define the following quantity for any of these functions:
\begin{equation}\label{Gérard de Nerval}
    \rho[\theta]:= \inf\,\big\{x\in\R\;:\;\theta(x) \geq 0\big\}.
\end{equation}
Due to their opposite limits at infinity and their continuity, we know that $\rho[\theta] \in \mathbb{R}$.
It corresponds to the first time where $\theta$ vanishes.
We can therefore reduce our problem to monotonous functions. This is done in two steps.

\begin{lemma} \label{lem:transf_tilde_dag}
    For any function $\theta \in W$, let 
    \begin{equation}\label{Jean-Jacques Rousseau}
        \widetilde{\theta}(x):=\left\{\begin{array}{ll}
        \theta(x) \leq 0&\quad\text{if }x\leq\rho[\theta]\\
        |\theta(x)| \geq 0&\quad\text{otherwise, }
        \end{array}\right.
    \end{equation}
    and the \emph{best non-decreasing upper bound}
    \begin{equation*}
        \theta^\dag(x):=\left\{\begin{array}{ll}
        \inf_{y\in[x,+\infty)}\widetilde{\theta}(y)&\quad\text{if }x\leq\rho[\theta]\\
        \sup_{y\in(-\infty,x]}\widetilde{\theta}(y)&\quad\text{otherwise,}
        \end{array}\right.
    \end{equation*}
    where $\rho[\theta]$ is defined at~\eqref{Gérard de Nerval}.
    Then $\widetilde{\theta}, \theta^\dag \in W$, $\theta^\dag$ is non-decreasing and $E_s (\theta^\dag) \leq E_s (\widetilde{\theta}) \leq E_s (\theta)$. Moreover,
    \begin{itemize}
        \item if $\theta$ is non-decreasing, then $\widetilde{\theta} = \theta$,
        \item if $\widetilde{\theta}$ is non-decreasing, then $\theta^\dag = \widetilde{\theta}$.
    \end{itemize}
\end{lemma}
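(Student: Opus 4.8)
\textbf{Proof plan for Lemma~\ref{lem:transf_tilde_dag}.}

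The plan is to handle the two transforms $\theta \mapsto \widetilde\theta$ and $\widetilde\theta \mapsto \theta^\dag$ separately, checking in each case (i) membership in $W$, (ii) the energy inequality, and (iii) the stated idempotency properties. Write $\rho = \rho[\theta]$ throughout.

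\emph{Step 1: the transform $\widetilde\theta$.} First I would observe that $\widetilde\theta$ is absolutely continuous: on $(-\infty,\rho]$ it equals $\theta$ (since $\theta \le 0$ there by definition of $\rho$ and continuity of $\theta$, so in fact $\widetilde\theta = \theta$ on that half-line), and on $[\rho,+\infty)$ it equals $|\theta|$, which is absolutely continuous as the composition of the $1$-Lipschitz map $|\cdot|$ with $\theta$; the two pieces agree at $\rho$ (both give $\theta(\rho)=0$). Hence $\widetilde\theta' = \theta'$ a.e. on $(-\infty,\rho)$ and $|\widetilde\theta'| = |\theta'|$ a.e. on $(\rho,+\infty)$, so $\int_\R |\widetilde\theta'|^2 s\,dx = \int_\R |\theta'|^2 s\,dx$. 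Since $\cos^2$ is even, $\cos^2(\widetilde\theta) = \cos^2(|\theta|) = \cos^2(\theta)$ pointwise, so the potential terms are equal too; thus $E_s(\widetilde\theta) = E_s(\theta)$ (in particular $\le$, as claimed). For membership in $W$: $\widetilde\theta' \in L^2$ and $\cos\widetilde\theta \in L^2$ are immediate from the above equalities, $\widetilde\theta(\R) \subseteq [-\pi/2,\pi/2]$ because $\theta(\R)\subseteq[-\pi/2,\pi/2]$ and taking absolute values keeps us in $[0,\pi/2]$, and the limit at $-\infty$ is $-\pi/2$ (same as $\theta$) while at $+\infty$ it is $\lim |\theta| = |\lim\theta| = \pi/2$. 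So $\widetilde\theta \in W \subseteq \mathscr W_{\neq}$. Finally, if $\theta$ is non-decreasing, then $\theta(x) \le \theta(\rho) = 0$ for $x \le \rho$ gives $\widetilde\theta = \theta$ on $(-\infty,\rho]$, and $\theta(x) \ge 0$ for $x \ge \rho$ gives $|\theta| = \theta$ there, so $\widetilde\theta = \theta$.

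\emph{Step 2: the transform $\theta^\dag$.} Note $\widetilde\theta \ge 0$ on $[\rho,+\infty)$ and $\widetilde\theta \le 0$ on $(-\infty,\rho]$ with $\widetilde\theta(\rho)=0$. The ``running infimum from the right'' on $(-\infty,\rho]$ and ``running supremum from the left'' on $[\rho,+\infty)$ each produce a non-decreasing function, and they match at $\rho$ (both equal $0$, using that $\widetilde\theta \le 0$ to the left and $\ge 0$ to the right), so $\theta^\dag$ is globally non-decreasing; it satisfies $\widetilde\theta(x) \le \theta^\dag(x)$ for $x > \rho$ and $\widetilde\theta(x) \ge \theta^\dag(x)$ for $x \le \rho$, i.e. $|\theta^\dag| \le |\widetilde\theta|$ everywhere, whence $\theta^\dag(\R) \subseteq [-\pi/2,\pi/2]$ and, since $\cos^2$ is non-increasing in $|\cdot|$ on $[0,\pi/2]$, $\int \cos^2(\theta^\dag) s \le \int \cos^2(\widetilde\theta) s$. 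For the gradient term I would use the standard fact that a monotone rearrangement/envelope of this type is locally Lipschitz where it is strictly monotone and locally constant elsewhere: on the (open) set where $\theta^\dag$ is locally constant its derivative vanishes, and on the complement one checks $\theta^\dag$ coincides locally with $\widetilde\theta$ (at a point $x_0 > \rho$ where $\theta^\dag(x_0) = \widetilde\theta(x_0)$ and $\theta^\dag$ is not locally constant to the left, $\theta^\dag = \widetilde\theta$ on a left-neighbourhood; symmetrically to the left of $\rho$), so $|{\theta^\dag}'| \le |\widetilde\theta'|$ a.e. Since $s > 0$, this gives $\int |{\theta^\dag}'|^2 s \le \int |\widetilde\theta'|^2 s$. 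Combining, $\theta^\dag' \in L^2$, $\cos\theta^\dag \in L^2$, the limits at $\pm\infty$ are $\pm\pi/2$ (the running sup to the left of a function tending to $\pi/2$ tends to $\pi/2$, and the running inf to the right of a function tending to $-\pi/2$ — reading the definition on $(-\infty,\rho]$, $\inf_{y\ge x}\widetilde\theta(y)$ as $x \to -\infty$ tends to $\inf_{\R}\widetilde\theta \ge -\pi/2$; combined with $\theta^\dag \le \widetilde\theta \to -\pi/2$ along a subsequence this forces the limit $-\pi/2$), so $\theta^\dag \in W$ and $E_s(\theta^\dag) \le E_s(\widetilde\theta)$. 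The idempotency ``$\widetilde\theta$ non-decreasing $\Rightarrow \theta^\dag = \widetilde\theta$'' is immediate: the running inf/sup of a non-decreasing function is itself.

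\emph{Main obstacle.} The only genuinely delicate point is the derivative comparison $|{\theta^\dag}'| \le |\widetilde\theta'|$ a.e. for the envelope transform in Step~2 — establishing that $\theta^\dag$ is absolutely continuous and that its derivative is a.e. either zero (on the ``plateau'' set) or equal to that of $\widetilde\theta$. Everything else is bookkeeping with limits, evenness of $\cos^2$, and monotonicity of $\cos^2$ on $[0,\pi/2]$. I would isolate the plateau comparison as the heart of the argument, treating the half-lines $(-\infty,\rho]$ and $[\rho,+\infty)$ symmetrically and invoking the elementary structure of the greatest non-decreasing minorant / least non-decreasing majorant of a continuous $L^2$-gradient function.
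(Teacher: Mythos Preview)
Your approach is essentially the same as the paper's: treat the two transforms separately, use evenness of $\cos^2$ and $|\widetilde\theta'| = |\theta'|$ a.e.\ for Step~1, and for Step~2 compare the potential via the monotonicity of $\cos^2$ on $[0,\pi/2]$ and the gradient via the plateau/coincidence structure of the monotone envelope (the paper cites \cite{Godard-Cadillac_2021_Tamping} for this last point, which you correctly identify as the only nontrivial ingredient).

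There is, however, a sign slip in Step~2 that breaks the logical chain as written. You assert $|\theta^\dag| \le |\widetilde\theta|$, but the inequality goes the other way: for $x>\rho$ you have $\theta^\dag(x)=\sup_{y\le x}\widetilde\theta(y)\ge \widetilde\theta(x)\ge 0$, hence $|\theta^\dag(x)|\ge |\widetilde\theta(x)|$, and symmetrically for $x\le\rho$. With the inequality as you stated it, together with ``$\cos^2$ non-increasing in $|\cdot|$'', you would obtain $\cos^2(\theta^\dag)\ge \cos^2(\widetilde\theta)$, the wrong direction. The fix is simply to reverse the inequality on $|\cdot|$; then $\cos^2(\theta^\dag)\le \cos^2(\widetilde\theta)$ follows as desired, and the range inclusion $\theta^\dag(\R)\subseteq[-\pi/2,\pi/2]$ should instead be argued directly from the definition (the sup of values in $[0,\pi/2]$ stays in $[0,\pi/2]$, and likewise for the inf). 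With this correction the proposal matches the paper's proof.
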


\begin{proof}
To start-with one can check directly that $\|\theta\|_{L^\infty}=\|\widetilde{\theta}\|_{L^\infty}=\|\theta^\dag\|_{L^\infty}$

\noindent\underline{Step 1}:
    Since the function $\cos^2$ is even, we have $\cos^2 y=\cos^2|y|$ for any $y \in \mathbb R$. Thus,
    \begin{equation}\label{Laurel}
        \int_\R\frac{\cos^2\theta(x)}{2}\,s(x)\diff x\;=\;\int_\R\frac{\cos^2\widetilde{\theta}(x)}{2}\,s(x)\diff x.
    \end{equation}
    On the other hand, the triangle inequality gives that $\big| |f(x+y)|-|f(x)|\big|\leq|f(x+y)-f(x)|,$ which implies 
    \begin{equation}\label{Gabrielle d'Estrées}
        \int\big||f|'(x)\big|^2  s(x) \diff x\;\leq\;\int\big|f'(x)\big|^2  s(x) \diff x.
    \end{equation}
    We now observe that since $\rho[\theta]$ is a zero of $\theta$, then the transformation giving $\widetilde{\theta}$ from $\theta$ does not create a discontinuity at the point $\rho[\theta]$. Therefore the function $\widetilde{\theta}$ is weakly differentiable if and only if $\theta$ is weakly differentiable. Its weak derivative satisfy for almost every $x\in\R$:
    \begin{equation}\label{Henri IV}
        \big|\widetilde{\theta}'(x)\big|\;=\;\big|\theta'(x)\big|
    \end{equation}
    If we now combine~\eqref{Gabrielle d'Estrées} and~\eqref{Henri IV}, we get:
    \begin{equation}\label{Hardy}
        \int_\R\big|\theta'(x)\big|^2 s(x) \diff x\geq\int_\R\big|\widetilde{\theta}\,'(x)\big|^2 s(x) \diff x.
    \end{equation}
    Thus,~\eqref{Laurel} and~\eqref{Hardy} together imply
    \begin{equation} \label{eq:ineq_energy_tilde_theta}
        E_s\big(\,\widetilde{\theta}\,\big)\leq E_s(\theta). 
    \end{equation}
    In particular this implies $\widetilde{\theta}\in W$.
    
\noindent\underline{Step 2}:
    The function $\theta^\dag$ is indeed non-decreasing.
    One can check that the set of $x$ such that $\theta^\dag(x)\neq\widetilde{\theta}(x)$ is a reunion of 2-by-2 disjoint intervals and on each of these intervals the function  $\theta^\dag$ is constant (for more details and properties, see~\cite{Godard-Cadillac_2021_Tamping}). As a consequence, we have:
    \begin{equation*}
        \int_\R\big|\big(\theta^\dag\big)'(x)\big|^2s(x) \diff x\;\leq\;\int_\R\big|\widetilde{\theta}'(x)\big|^2s(x) \diff x,
    \end{equation*}
    with equality if and only if $\widetilde{\theta}\equiv\theta^\dag$. 
    
    From the properties of $W$, there holds
    \begin{equation*}
        \forall\,x\leq\rho[\theta],\quad-\frac{\pi}{2}\leq\theta^\dag(x)\leq\widetilde{\theta}(x)\leq 0.
    \end{equation*}
    Thus,
    \begin{equation*}
        \int_{-\infty}^{\rho[\theta]}\cos^2\big(\theta^\dag(x)\big)\,s(x) \diff x\;\leq\;\int_{-\infty}^{\rho[\theta]}\cos^2\big(\widetilde{\theta}(x)\big)\,s(x) \diff x\;
    \end{equation*}
    Similarly,
    \begin{equation*}
        \forall\,x\geq\rho[\theta],\quad\frac{\pi}{2}\geq\theta^\dag(x)\geq\widetilde{\theta}(x)\geq 0.
    \end{equation*}
    Thus,
    \begin{equation*}
        \int_{\rho[\theta]}^{\infty}\cos^2\big(\theta^\dag(x)\big)\,s(x) \diff x\;\leq\;\int_{\rho[\theta]}^{\infty}\cos^2\big(\widetilde{\theta}(x)\big)\,s(x) \diff x\;
    \end{equation*}
    Combining these inequalities gives:
    \begin{equation*}
        \int_\R\cos^2\big(\theta^\dag(x)\big)\,s(x) \diff x\;\leq \int_\R\cos^2\big(\widetilde{\theta}(x)\big)\,s(x) \diff x\;
    \end{equation*}
    Similarly as before, this gives an inequality of the energy for this transform. Combining it with \eqref{eq:ineq_energy_tilde_theta} yields the conclusion.

\end{proof}

Last, if the notch is symmetric, there is also another transform using symmetric decreasing rearrangement (see~\cite{Lieb_Loss_2014} for details about rearrangements) that we can use.

\begin{lemma} \label{lem:rearrange_odd}
    Assume that $s \in \mathscr{A}_a (\Omega)$ and $s(\cdot) \neq 1$. For $\theta \in W$ non-decreasing, we define
    \begin{equation} \label{eq:change_var_theta}
        \vartheta = \theta \circ y^{-1},
    \end{equation}
    where $y$ is defined in \eqref{eq:def_y_change_var}, and
    \begin{equation*}
        \vartheta^\ddag(y) \coloneqq \bigg(\frac{\pi}{2}-|\vartheta|\bigg)^\sharp(y)-\frac{\pi}{2}
        \qquad \text{and} \qquad
        \theta^\ast(x):=\widetilde{\,\vartheta^\ddag\,} (y),
    \end{equation*}
    where $\sharp$ denotes the symmetric decreasing rearrangement of non-negative functions on $\R$ and $\sim$ denotes the transform of Lemma \ref{lem:transf_tilde_dag}.
    Then $\theta^\ast$ is a non-decreasing odd function in $W$, and $E_s (\theta^\ast) \leq E_s (\theta)$. The inequality is strict if $\theta$ is not odd.
\end{lemma}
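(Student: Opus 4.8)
The plan is to exploit the change of variables $y$ of \eqref{eq:def_y_change_var}, which turns the weighted Dirichlet integral $\int \theta'^2 s\,dx$ into the flat integral $\int (\partial_y\vartheta)^2\,dy$, perform a standard symmetric rearrangement in the $y$-variable, and then translate back. First I would record the transport identities: with $\vartheta=\theta\circ y^{-1}$ one has $\int_\R\theta'(x)^2 s(x)\,dx=\int_\R(\partial_y\vartheta)^2\,dy$ and $\int_\R\cos^2\theta(x)\,s(x)\,dx=\int_\R\cos^2\vartheta(y)\,\sigma(y)\,dy$, where $\sigma=s\circ y^{-1}$ is even (since $s$ is even and $y$ is odd, because $s$ even $\Rightarrow$ the integrand $1/s$ is even $\Rightarrow y$ is odd) and equals $1$ outside $(\mathfrak a_-,\mathfrak a_+)=(-\mathfrak a_+,\mathfrak a_+)$. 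So in the $y$-world the energy is $\tfrac12\int(\partial_y\vartheta)^2+\tfrac12\int\cos^2\vartheta\,\sigma$.

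Next I would analyze the auxiliary nonnegative function $u:=\tfrac\pi2-|\vartheta|$. Since $\theta\in W$ is non-decreasing with limits $\pm\pi/2$, the function $|\vartheta|$ decreases then increases (it is $V$-shaped with minimum $|\vartheta|=|\theta(\rho[\theta])|$, actually $0$ if $\rho[\theta]$ is a genuine zero, but in any case $|\vartheta|\le\pi/2$), so $u\ge0$, $u\in L^2$ (because $\cos^2\vartheta\ge c\,u^2$ near the minimum of $\cos^2$ and $\cos^2\vartheta\,\sigma\ge s_0\cos^2\vartheta\gtrsim u^2$ away from it, using $\sigma\ge s_0$), and $\partial_y u=-\mathrm{sgn}(\vartheta)\,\partial_y\vartheta\in L^2$ so $u\in H^1(\R)$. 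Now apply the Pólya–Szegő inequality to get $\int(u^\sharp{}')^2\le\int(\partial_y u)^2=\int(\partial_y\vartheta)^2$, and since $t\mapsto\cos^2(\tfrac\pi2-t)=\sin^2 t$ is non-decreasing on $[0,\pi/2]$, the function $\cos^2\vartheta=\sin^2 u$ is a non-decreasing function $G$ of $u$ with $G(0)=0$; write $\int\cos^2\vartheta\,\sigma\,dy=\int G(u)\sigma\,dy$. Because $u^\sharp$ is the symmetric decreasing rearrangement and $\sigma$ is even with $\sigma\le1$ and $\sigma\not\equiv1$, I would invoke the Hardy–Littlewood-type / bathtub rearrangement inequality in the form: for $\sigma$ even, $\int G(u)(1-\sigma)\,dy\ge\int G(u^\sharp)(1-\sigma)\,dy$ — more carefully, writing $G(u)=\int_0^\infty G'(t)\mathbbm 1_{\{u>t\}}\,dt$ and using that $\int_{\{u>t\}}(1-\sigma)\ge\int_{\{u^\sharp>t\}}(1-\sigma)$ because $1-\sigma\ge0$ is supported in the bounded symmetric interval $(-\mathfrak a_+,\mathfrak a_+)$ and $\{u^\sharp>t\}$ is the centered interval of the same measure as $\{u>t\}$ (Hardy–Littlewood for the characteristic function of a symmetric set against a rearranged function). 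Since $\int G(u)\,dy=\int G(u^\sharp)\,dy$, subtracting gives $\int G(u)\sigma\,dy\ge\int G(u^\sharp)\sigma\,dy$, i.e. the potential term does not increase. Setting $\vartheta^\ddag=u^\sharp-\tfrac\pi2=(\tfrac\pi2-|\vartheta|)^\sharp-\tfrac\pi2$, both terms of $E_s$ (read in $y$) decrease, so $E_s(\widetilde{\vartheta^\ddag}\circ y)\le E_s(\theta)$; here one passes from $\vartheta^\ddag$ to $\theta^\ast=\widetilde{\vartheta^\ddag}\circ y$ and uses Lemma~\ref{lem:transf_tilde_dag} (applied in the $y$-variable, which is legitimate since $\sigma$ satisfies the same hypotheses as $s$) to make it non-decreasing without increasing the energy — indeed $u^\sharp$ is even and non-increasing on $[0,\infty)$, so $\vartheta^\ddag$ is even, $\le0$, with a single minimum at $0$, hence $\widetilde{\vartheta^\ddag}$ is precisely the odd non-decreasing function agreeing with $\vartheta^\ddag$ for $y\ge0$ and with $-\vartheta^\ddag(-y)$ for $y<0$; composing with the odd diffeomorphism $y$ keeps it odd and non-decreasing, and one checks $\theta^\ast\in W$ (limits $\pm\pi/2$, $\cos\theta^\ast\in L^2$, $\theta^{\ast\prime}\in L^2$). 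Finally, the limits $\pm\pi/2$ force $u$ to vanish at $\pm\infty$ and $u^\sharp$ is not identically zero (else $\theta\equiv\pm\pi/2$, impossible in $W$), so $\theta^\ast(\pm\infty)=\pm\pi/2$, confirming $\theta^\ast\in W$.

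For the strictness claim, suppose $E_s(\theta^\ast)=E_s(\theta)$. Then equality holds in Pólya–Szegő, $\int(u^\sharp{}')^2=\int(\partial_y u)^2$, which (by the equality case of Pólya–Szegő in one dimension, e.g. the Brothers–Ziemer argument or the elementary 1D version) forces $u$ to already be, up to translation, a symmetric decreasing function; but simultaneously equality in the Hardy–Littlewood step forces the super-level sets $\{u>t\}$ to be \emph{centered} intervals (this is where $\sigma\not\equiv1$, i.e. $1-\sigma\not\equiv0$, is used: if $1-\sigma$ vanished identically the inequality would be an equality for free, which is exactly the excluded case $s\equiv1$), hence no translation is allowed and $u=u^\sharp$, i.e. $|\vartheta|$ is even, i.e. $\vartheta$ is odd (using that $\vartheta$ is non-decreasing through its unique zero), i.e. $\theta$ is odd. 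Contrapositively, if $\theta$ is not odd the inequality is strict.

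\textbf{Main obstacle.} The delicate point is the potential-term inequality $\int\cos^2\vartheta\,\sigma\,dy\ge\int\cos^2\vartheta^\ddag\,\sigma\,dy$ together with its equality case: one must combine the layer-cake representation of $G(u)=\sin^2 u$ with the Hardy–Littlewood inequality against the \emph{fixed} even weight $\sigma$ (equivalently, handle $1-\sigma\ge0$), and then extract from equality that the level sets are genuinely centered — this is where the hypothesis $s\not\equiv1$ is essential and where the argument is least routine. The Pólya–Szegő part and the bookkeeping of limits/monotonicity via Lemma~\ref{lem:transf_tilde_dag} are standard by comparison.
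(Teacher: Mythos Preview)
Your approach is essentially the same as the paper's: change variables via \eqref{eq:def_y_change_var} to get a flat Dirichlet term, apply P\'olya--Szeg\H{o} to $u=\tfrac{\pi}{2}-|\vartheta|$, use Hardy--Littlewood with the even weight for the potential term, and settle the strict-inequality case by combining the P\'olya--Szeg\H{o} equality case (translate of a symmetric function) with the Hardy--Littlewood equality case (forcing the translate to be zero, which is where $s\not\equiv1$ enters). Two small slips to fix: (i) the transport identity for the potential gives weight $\sigma^2$, not $\sigma$, since $\diff x=\sigma(y)\,\diff y$; this is harmless because $\sigma^2$ inherits the same symmetry and monotonicity; (ii) your intermediate inequality $\int_{\{u>t\}}(1-\sigma)\ge\int_{\{u^\sharp>t\}}(1-\sigma)$ has the sign reversed (the bathtub principle gives $\le$, since $1-\sigma$ is symmetric decreasing and the centered interval maximizes its mass), and with the corrected sign your subtraction then correctly yields $\int G(u)\,\sigma\ge\int G(u^\sharp)\,\sigma$.
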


\begin{proof}
    By the Pólya–Szegő inequality for the rearrangements, we have
    \begin{equation}\label{Le prince d'Aquitaine}
        \int_\R\big(\vartheta^\ddag\big)'(y)^2 \diff y\;\leq\;\int_\R\vartheta'(y)^2 \diff y.
    \end{equation}
    Since the function $\theta$ is assumed to be non-decreasing, so is $\vartheta$.
    Using the case of equality in the Pólya–Szegő inequality, we infer that the inequality above is an equality if, and only if, 
    \begin{equation}\label{La tour abolie}
        \exists\;a\in\R,\quad \forall y\in \R, \quad -\big|\vartheta(y-a)\big|=\vartheta^\ddag(y).
    \end{equation}
    On the other hand, since the function $x\mapsto s(x)$ belongs to ${\mathscr A}_a(\R),$ then we have $-s(x)=(-s)^\sharp(x)$. 
    It is direct to check that this property is then also satisfied for $\sigma$.
    We now recall that if $f$ is a non-negative non-decreasing function then we have $f\circ(g^\sharp)=(f\circ g)^\sharp$ almost everywhere. Thus, almost everywhere:
    \begin{equation*}
        \cos^2\big(\vartheta^\ddag\big)=\sin^2\bigg(\bigg(\frac{\pi}{2}-|\vartheta|\bigg)^\sharp\bigg)=\bigg(\sin^2\bigg(\frac{\pi}{2}-|\vartheta|\bigg)\bigg)^\sharp
    \end{equation*}
    and then $\cos^2\big(\vartheta^\ddag\big)=\cos^2\big(\vartheta\big)^\sharp.$
    Therefore, by the Hardy-Littlewood rearrangement inequality,  we have
    \begin{equation}\label{Le ténébreux}
        \int_\R\cos^2\big(\vartheta(y)\big)\,(-\sigma^2)(y) \diff y\;\leq\;\int_\R\big(\cos^2\big(\vartheta\big)\big)^\sharp(y)\,(-\sigma^2)^\sharp(y) \diff y\;=\;\int_\R\cos^2\big(\vartheta^\ddag(y)\big)\,(-\sigma^2)(y) \diff y.
    \end{equation}
    We combine~\eqref{Le prince d'Aquitaine} and~\eqref{Le ténébreux} and get
    \begin{equation}\label{la treille}
        E_s(\theta^\ast)\;\leq\;E_s(\theta),
    \end{equation}
    and the case of equality is given by~\eqref{La tour abolie}. 
    
    To conclude that $\theta^\ast$ is an odd function, there remain to prove that the number $a\in\R$ appearing in~\eqref{La tour abolie} is actually $0$. We consider a function $\vartheta$ such that~\eqref{La tour abolie} holds for some value of $a>0$ and we prove that in this case~\eqref{Le ténébreux} is actually a strict inequality. The same reasoning also holds in the case $a<0$. First, using the parity of $\cos^2$ and~\eqref{La tour abolie}, we write 
    \begin{equation*}\begin{split}
       I(\vartheta)&:=\frac{1}{2}\int_\R\cos^2\big(\vartheta(y)\big)\,\sigma^2(y) \diff y=\frac{1}{2}\int_\R\cos^2\big(\vartheta^\ddag(y+a)\big)\,\sigma^2(y) \diff y.\\
        &=\int_\R\bigg(\int_{-\frac{\pi}{2}}^{\vartheta^\ddag(y+a)}\cos(\nu)\,\sin(\nu) \diff \nu\bigg)\bigg(\int_0^{\sigma^2(y)}d\mu\bigg) \diff y\\
        &=\int_\R\int_{-\frac{\pi}{2}}^{0}\int_0^{+\infty}\mathbbm{1}_{\big\{\vartheta^\ddag(y+a)\geq\nu\big\}}\,\mathbbm{1}_{\big\{\sigma^2(y)\geq\mu\big\}}\,\cos(\nu)\,\sin(\nu) \diff \mu \diff \nu \diff y,
    \end{split}
    \end{equation*}
    where for the last inequality we used the fact that $\vartheta^\ddag\leq 0$.
    By the Fubini theorem,
    \begin{equation*}
        I(\vartheta)=\int_{-\frac{\pi}{2}}^{0}\int_0^{+\infty}\meas\Big(\big\{y\in\R\,:\,\vartheta^\ddag(y+a)\geq\nu\quad\text{and}\quad\sigma^2(y)\geq\mu\big\}\Big)\,\cos(\nu)\,\sin(\nu) \diff \nu \diff \mu.
    \end{equation*}
    Since the function $y\mapsto-\sigma^2(y)$ is symmetric decreasing, we have
    \begin{equation*}
        \{y\in\R\,:\sigma^2(y)\leq\mu\big\}=[-m_\mu,\,m_\mu],
    \end{equation*}
    where $m_\mu:=\meas\{y\in\R\,:\sigma^2(y)\leq\mu\big\}/2$. Similarly, since the function $y\mapsto\vartheta^\ddag(y)$ is symmetric decreasing, we have
    \begin{equation*}
        \{y\in\R\,:\vartheta^\ddag(y+a)\geq\nu\big\}=\big[-q_\nu-a,\,q_\nu-a\big],
    \end{equation*}
    where $q_\nu:=\meas\{y\in\R\,:\vartheta^\ddag(y)\leq\nu\big\}/2$. Thus,
    \begin{equation}\label{le pampre}
        I(\vartheta)=\int_{-\frac{\pi}{2}}^{+\infty}\int_0^{+\infty}\meas\Big(\big[-q_\nu-a,\,q_\nu-a\big]\setminus[-m_\mu,\,m_\mu]\Big)\,\cos(\nu)\,\sin(\nu) \diff \nu \diff \mu.
    \end{equation}
    Similar computations lead to
    \begin{equation}\label{la rose}
        I(\vartheta^\ddag)=\int_{-\frac{\pi}{2}}^{+\infty}\int_0^{+\infty}\meas\Big(\big[-q_\nu,\,q_\nu\big]\setminus[-m_\mu,\,m_\mu]\Big)\,\cos(\nu)\,\sin(\nu) \diff \nu \diff \mu.
    \end{equation}
    It is a direct consequence of the Hardy-Littlewood rearrangement inequality to have:
    \begin{equation}\label{L'inconsolé}
        \meas\Big(\big[-q_\nu-a,\,q_\nu-a\big]\setminus[-m_\mu,\,m_\mu]\Big)\;\geq\;\meas\Big(\big[-q_\nu,\,q_\nu\big]\setminus[-m_\mu,\,m_\mu]\Big)
    \end{equation}
    Therefore, we get the announced strict inequality if and only if the inequality above is strict for a set of $\nu$ that have a non-vanishing Lebesgue measure. More precisely, we study
    \begin{equation*}
        R_\mu:=\Big\{\nu\in\Big[-\frac{\pi}{2},0\Big]\;:\;\text{Inequality}~\eqref{L'inconsolé}\text{ is strict}.\,\Big\}.
    \end{equation*}
    We first remark that since $x\mapsto s(x)$ is not constant, so is $x\mapsto\sigma(x)$ and thus
    \begin{equation*}
        M:=\Big\{\mu\geq0\,:\,0<m_\mu<+\infty\Big\}
    \end{equation*}
    is an interval with non-empty interior. Moreover, we have
    \begin{equation*}
       \Big\{y\in\R\,:\vartheta^\ddag(y)\geq0\Big\}=\{0\},\qquad\text{and}\qquad \Big\{y\in\R\,:\vartheta^\ddag(y)\geq-\frac{\pi}{2}\Big\}=\R.
    \end{equation*}
    Then, since the function $\vartheta$ is continuous increasing, we have for all $p>0$ a unique $\nu$ such that $q_\nu=p$. 
    Therefore, assuming for instance that\footnote{In the case $a<0$, we can do the same reasoning by simply swapping signs.} $a>0$,
    \begin{equation*}
        \forall\;\mu\in M,\qquad\Big\{\nu\in\Big[-\frac{\pi}{2},\;0\Big]\;:\;m_\mu-a<q_\nu<m_\mu+a\Big\}\quad\text{is a non-empty interval}.
    \end{equation*}
    We now observe that~\eqref{L'inconsolé} is a strict inequality if and only if $q_\nu-a< m_\mu$ or $-q_\nu-a<-m_\mu$. It is in particular true when $m_\mu-a<q_\nu<m_\mu+a$.
    Thus,
    \begin{equation*}
        \forall\;\mu\in M,\qquad\meas\,R_\mu>0.
    \end{equation*}
    Plugging this back into~\eqref{le pampre} and~\eqref{la rose} gives that~\eqref{Le ténébreux} is a strict inequality.
    Thus, we proved that Inequality~\eqref{la treille} is an equality if and only if $-|\vartheta|=\vartheta^\ddag$. This property is equivalent to $\vartheta$ odd, and thus $\theta$ odd. This concludes the proof.
\end{proof}

\subsubsection{The switch must be performed inside the notch}

The last transform is a simple but still powerful translation in order to localize the switch in (or near enough) the notch. This property will be very important, in particular in order to get a compactness property for the minimizing sequence for the problem.

\begin{lemma}[Localization of the spin switch] \label{lem:localization_spin_switch}
Let $\theta\in W$ non-decreasing. Assume that the notch profile $s$ is equal to $1$ outside some interval $[a,b]$ and is below $1$ inside this interval. Then, there exists a non-decreasing function $\theta_0\in W$ such that:\medskip

$(i)$ The set of zeros of $\theta_0$ intersects the interval $[a,b]$.\medskip

$(ii)$ We have $E_s(\theta)\geq E_s(\theta_0)$.
\end{lemma}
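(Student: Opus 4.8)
## Proof Strategy for the Localization Lemma

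The plan is to exploit translation invariance of the energy \emph{outside} the notch together with the fact that the weight $s$ is strictly smaller than $1$ inside $[a,b]$. Suppose $\theta \in W$ is non-decreasing but its zero set does not meet $[a,b]$. Since $\theta$ is non-decreasing with limits $\pm \pi/2$, its zero set is a (possibly degenerate) closed interval $[z_-, z_+]$, and by assumption either $z_+ < a$ or $z_- > b$; by symmetry assume $z_+ < a$, so that $\theta > 0$ on all of $[a,b]$ (in fact on $(z_+, +\infty)$). I would then define, for a shift parameter $\tau \geq 0$, the translated competitor $\theta_\tau(x) := \theta(x + \tau)$ for $x \leq z_+ + \tau$ glued appropriately — more precisely, the cleanest construction is to translate the ``left half'' of the profile to the right until a zero lands inside $[a,b]$. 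One concrete way: set $\tau^* := a - z_+ > 0$ and define $\theta_0(x) = \theta(x)$ for $x \geq a$, and $\theta_0(x) = \max\{\theta(x - \tau^*), -\pi/2\}$ for $x < a$ — but one must check the junction at $x=a$ matches; since $\theta$ is continuous and non-decreasing, $\theta(a-\tau^*) = \theta(z_+) = 0 \le \theta(a)$, so there may be a jump. The remedy is to insert a constant-$0$ plateau, or better, to do the translation more symmetrically.

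Here is the construction I would actually carry out. Because $\theta$ is non-decreasing with $\overline{\theta(\mathbb R)} = [-\pi/2,\pi/2]$, define its generalized inverse and note that moving mass of the profile across the notch decreases the $\cos^2$-term since $s<1$ there. Concretely, I would let $\theta_0(x) := \theta(x + \tau)$ on $(-\infty, a]$ and $\theta_0(x) := \theta(x)$ on $[a+\tau, +\infty)$ for a suitable $\tau$, filling the gap $[a, a+\tau]$ with the constant value $\theta(a)$ — wait, this only works if $\theta(a) = \theta(a+\tau)$, i.e. if $\theta$ is already constant on $[a,a+\tau]$. Instead the robust approach is: on $[a,b]$ replace $\theta$ by a constant equal to $0$ on a subinterval and reconnect. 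The genuinely clean argument is the following two-step comparison.

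\textbf{Step 1 (cut and slide).} Since $\theta > 0$ on $(z_+, \infty) \supseteq [a,b]$, consider $\theta^\flat := \max\{\theta - c, -\pi/2\}$ translated back: for $c>0$ small this pushes the zero set rightward into $[a,b]$ while, critically, the Dirichlet term $\int (\theta^\flat)'^2 s$ does not increase (the derivative is supported on a smaller set) and the potential term changes by moving the ``$\cos^2$ is large near $0$'' region into the notch where $s < 1$. The honest bookkeeping here is delicate: subtracting a constant is not energy-monotone by itself. So Step 1 should instead be pure \emph{translation}: replace $\theta$ by $\theta(\cdot + \tau)$ where $\tau = (a - z_+)$ if $z_+ < a$ (resp. a negative shift if $z_- > b$), producing $\theta_0$. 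The Dirichlet term $\int \theta'(x+\tau)^2 s(x)\,dx$ and the potential term $\int \cos^2\theta(x+\tau)\, s(x)\,dx$: outside $[a,b]$ the weight is $1$, translation-invariant, so only the contribution over $[a,b]$ changes.

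\textbf{Step 2 (the gain inside the notch).} This is the main obstacle, and where unimodality / the structure of $s$ must be used. Compare $\int_a^b \cos^2\theta(x)\,s(x)\,dx$ with $\int_a^b \cos^2\theta(x+\tau)\,s(x)\,dx$ and similarly for the Dirichlet terms, writing the difference as an integral against $(1 - s(x)) \geq 0$ over $[a,b]$ plus a translation-invariant remainder over $\mathbb{R}$. Because after the shift the zero of $\theta_0$ sits inside $[a,b]$, the function $\theta_0$ is closer to $0$ there than $\theta$ was (which was $\geq$ some positive value throughout $[a,b]$), so $\cos^2\theta_0 \geq \cos^2\theta$ pointwise on $[a,b]$ — that goes the \emph{wrong} way. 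Hence the gain cannot come from the potential term; it must come from balancing the total energy $E_s = E_1 - \frac12\int(1-s)(\theta'^2 + \cos^2\theta)$. Translation preserves $E_1(\theta)$ exactly (since $\theta(\pm\infty)=\pm\pi/2$ and $E_1$ is translation invariant on $W$), so $E_s(\theta_0) - E_s(\theta) = \frac12\int_a^b (1-s(x))\big[(\theta'^2 + \cos^2\theta)(x) - (\theta'^2+\cos^2\theta)(x+\tau)\big]\,dx$ — this still need not have a sign. The correct fix: rather than a rigid translation of the \emph{whole} profile, translate only the part to the \emph{left} of the zero and the part to the \emph{right} independently, inserting a plateau at height $0$; since $\theta$ is non-decreasing the plateau is consistent, the Dirichlet energy is unchanged (we only slid two pieces apart over the region $s\equiv 1$ if the zero is already left of $a$, or... ).

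Given the genuine subtlety, the cleanest honest plan I would pursue: parametrize by arclength via $y(x) = \int_0^x du/s(u)$ as in \eqref{eq:def_y_change_var}, so the Dirichlet term becomes weight-free and the potential term picks up the weight $\sigma(y)$. In the $y$-variable, translation \emph{is} energy-monotone in the right direction: moving the zero of $\vartheta = \theta\circ y^{-1}$ toward the region where $\sigma < 1$ strictly lowers $\int \cos^2\vartheta\,\sigma\,dy$ when combined with the non-decreasing structure, because $\cos^2\vartheta$ is smallest (near $0$) precisely at the zero, and we want that small value \emph{not} wasted on the region $\sigma=1$ but... again the monotonicity fights us. The actual mechanism, which I believe is the intended one: if the zero set misses $[a,b]$, then on $[a,b]$ we have $|\theta| \geq \delta > 0$, so $\cos^2\theta \leq \cos^2\delta < 1$ there; replace $\theta$ on a small interval inside $[a,b]$ by the constant value that matches one endpoint, \emph{removing} some Dirichlet energy, and the resulting function, after re-monotonizing via Lemma \ref{lem:transf_tilde_dag}, has strictly smaller energy and a zero (or a flat piece) that can be pushed into $[a,b]$. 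I expect the main obstacle to be making this surgery rigorous while staying inside $W$ and preserving the boundary limits — i.e. choosing the interval and the constant so that (a) the Dirichlet term strictly drops by an amount exceeding any potential-term increase, using $s \leq 1 < $ pointwise bound, and (b) the new zero genuinely lands in $[a,b]$ after applying $\theta \mapsto \theta^\dag$.
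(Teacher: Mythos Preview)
Your proposal does not arrive at a proof; it circles the correct idea and then abandons it because of a sign confusion. You correctly identify the change of variables $y(x)=\int_0^x du/s(u)$ as the right move: in $y$-coordinates the energy becomes $E_s(\theta)=\tfrac12\int \vartheta'(y)^2\,dy+\tfrac12\int \cos^2\vartheta(y)\,\sigma^2(y)\,dy$, so a rigid translation $\vartheta_0(y)=\vartheta(y-\delta)$ leaves the Dirichlet term \emph{exactly} invariant and one only has to compare the potential terms. This is precisely what the paper does. Your abandonment of this route rests on the claim that ``$\cos^2\vartheta$ is smallest near the zero'' --- that is backwards: $\cos^2\vartheta$ equals $1$ at the zero and decays to $0$ at $\pm\infty$. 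So the heuristic is that sliding the peak of $\cos^2\vartheta$ into the region where $\sigma<1$ \emph{lowers} the weighted integral, not raises it; the monotonicity does not ``fight you''.

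What is genuinely missing from your attempt is the mechanism that turns this heuristic into an inequality. The paper's device is a layer-cake/Fubini decomposition: write $\cos^2\vartheta(y)=\int_0^{\frac{\pi}{2}-|\vartheta(y)|}\sin(2\nu)\,d\nu$, so that the potential term becomes $\int_0^\infty \sin(2\nu)\Big(\int_{\{\frac{\pi}{2}-|\vartheta|>\nu\}}\sigma^2(y)\,dy\Big)d\nu$. Because $\vartheta$ is non-decreasing, $\tfrac{\pi}{2}-|\vartheta|$ is unimodal and each super-level set is an interval $[y_\nu,z_\nu]$ containing the zero $x_1'$; after the shift by $\delta=a'-x_1'$ it becomes $[y_\nu+\delta,z_\nu+\delta]$. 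The difference of the two inner integrals is then $\int_{y_\nu}^{y_\nu+\delta}\sigma^2-\int_{z_\nu}^{z_\nu+\delta}\sigma^2$. The key observation --- which none of your attempted constructions isolates --- is that $y_\nu+\delta\le x_1'+\delta=a'$, so the interval $[y_\nu,y_\nu+\delta]$ lies entirely in the region where $\sigma\equiv 1$; hence the first integral equals $\delta$ while the second is at most $\delta$. This gives the desired sign for every $\nu$, and integrating in $\nu$ yields $E_s(\theta_0)\le E_s(\theta)$. Your various cut-and-glue constructions in $x$-coordinates, and the final surgery idea, are unnecessary once you stop discarding the $y$-translation.
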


\begin{proof} 
In the case where the sets of zeros for $\theta$ intersects the interval $[a,b]$, it is enough to define $\theta_0\equiv\theta$. Otherwise, we proceed to a translation of the function $\theta$ until this property is satisfied.
We first observe that the set of zeros $\{x\in\R:\theta(x)=0\}$ is an interval $[x_0,x_1]$ since $\theta$ is non-decreasing.
Assume now, without loss of generality, that $x_1<a$
 we consider again the change of variable $\theta\leftrightarrow\vartheta$ given by~\eqref{eq:change_var_theta} so that we can work with the following formulation of the energy:
    \begin{equation*}
        E_s(\theta)\;=\;\frac{1}{2}\int_\R\vartheta'(y)^2\,dy+\frac{1}{2}\int_\R\cos^2\big(\vartheta(y)\big)\,\sigma^2(y)\,dy.
    \end{equation*}
Since the change of variable is an increasing diffeomorphism of $\R$, then $\sigma$ is equal to $1$ outside $[\mathfrak a_-, \mathfrak a_+]$. We also have that the largest zero of $\vartheta$, which we denote $x_1'$  (and that is the image of $x_1$ by the change of variable) is such that $x_1'<a'$. We define 
    \begin{equation*}
        \vartheta_0(x):=\vartheta(x-\delta)
    \end{equation*}
    where $\delta = a'-x_1>0$ and we define the function $\theta_0$ from $\vartheta_0$ using the change of variable formula~\eqref{eq:change_var_theta}.
    To compute the difference of the energies between $\theta$ and $\theta_0$, we first observe that
    \begin{equation*}
        \int_\R\vartheta'(y)^2dy=\int_\R\vartheta_0'(y)^2dy.
    \end{equation*}
    On the other hand, one has:
    \begin{equation*}
        \cos^2\big(\vartheta(y)\big)\,=\,\sin^2\bigg(\frac{\pi}{2}-\big|\vartheta(y)\big|\bigg)\,=\,2\int_0^{\frac{\pi}{2}-|\vartheta(y)|}\cos\nu\sin\nu\,d\nu\,=\,\int_0^{\frac{\pi}{2}-|\vartheta(y)|}\sin(2\nu)\,d\nu
    \end{equation*}
    Therefore with the Fubini theorem, we are lead to
    \begin{equation}\label{super plug}
        \int_\R\cos^2\big(\vartheta(y)\big)\,\sigma^2(y)\,dy=\int_\R\int_0^{+\infty}\mathbbm{1}_{\big\{\frac{\pi}{2}-|\vartheta(y)|>\nu\big\}}\,\sin(2\nu)\,\sigma^2(y)\,d\nu\,dy
    \end{equation}
    The same computation holds for $\vartheta_0$. We observe that since $\theta$ and then $\vartheta$ is increasing, then the function $\frac{\pi}{2}-|\vartheta|$ is unimodal (i.e non increasing on $(-\infty,x_1']$ and non decreasing on $[x_1', +\infty)$).
    As a consequence, the super-level sets $\big\{\frac{\pi}{2}-|\vartheta(y)|>\nu\big\}$ are intervals and they satisfy:
    \begin{equation}\label{bleue}
        y_\nu(\vartheta):= \inf\bigg\{\frac{\pi}{2}-|\vartheta(y)|>\nu\bigg\}\leq x_1' \leq\sup\bigg\{\frac{\pi}{2}-|\vartheta(y)|>\nu\bigg\} =: z_\nu(\vartheta)
    \end{equation}
    We also have
    \begin{equation}\label{verte}
         y_\nu(\vartheta)\leq y_\nu(\vartheta)+\delta= y_\nu(\vartheta_0),\qquad\text{and}\qquad z_\nu(\vartheta)\leq z_\nu(\vartheta)+\delta = z_\nu(\vartheta_0).
    \end{equation}
    We now compute
    \begin{equation}\label{plug}\begin{split}
        \int_\R&\mathbbm{1}_{\big\{\frac{\pi}{2}-|\vartheta(y)|>\nu\big\}}\,\sigma^2(y)\,dy-\int_\R\mathbbm{1}_{\big\{\frac{\pi}{2}-|\vartheta_0(y)|>\nu\big\}}\,\sigma^2(y)\,d\nu\,dy\\
        &=\int_{y_\nu(\vartheta)}^{y_\nu(\vartheta)+\delta}\sigma^2(y)\,dy-\int_{z_\nu(\vartheta)}^{z_\nu(\vartheta)+\delta}\sigma^2(y)\,dy
        \end{split}
    \end{equation}
    We now observe that, combing~\eqref{bleue} and~\eqref{verte}:
    \begin{equation*}
        y_\nu(\vartheta)\leq y_\nu(\vartheta)+\delta\leq a'.
    \end{equation*}
    Therefore, $\sigma^2$ is identically equal to $1$ on the interval of integration $[y_\nu(\vartheta);{y_\nu(\vartheta)+\delta}].$ Since $1$ is the maximal value of $\sigma^2$ we conclude that 
    \begin{equation*}
        \int_{y_\nu(\vartheta)}^{y_\nu(\vartheta)+\delta}\sigma^2(y)\,dy\geq\int_{z_\nu(\vartheta)}^{z_\nu(\vartheta)+\delta}\sigma^2(y)\,dy
    \end{equation*}
    Plugging this back into~\eqref{plug} and then in~\eqref{super plug}, we eventually conclude:
    \begin{equation*}
        E_s(\theta)\geq E_s(\theta_0). \qedhere
    \end{equation*}

\end{proof}

\subsubsection{The main result}

We are now in the position to prove the existence of a minimizer for $E_s (\theta)$.

\begin{proposition}[Existence of a minimizer]\label{prop:strict monotony}~
Let $s \in \mathcal{U}_a (\mathbb{R})$ such that $s \not \equiv 1$. Then the following properties holds.

$(i)\;$ The energy $E_s$ introduced at~\eqref{expr:SS_infinite:Energy} admits a minimizer in $\mathscr{W}_{\neq}$.
In particular, Equation~\eqref{expr:SSTheta_infinite} admits a solution.\vspace{0.1cm}

$(ii)\;$ If $\theta_1\in{\mathscr W}_{\neq} \setminus W$, there exists $\theta_0\in W$ such that 
\begin{equation*}
    E_s(\theta_0)<E_s(\theta_1).
\end{equation*}
In particular, any minimizer of $E_s$ belongs to $W$.

$(iii)$ Any $\theta$ minimizer of the energy $E_s$ in $W$ is an increasing function on $\R$.

$(iv)$ If $s \in \mathscr{A}_a (\Omega)$, then any minimizer is odd.
\end{proposition}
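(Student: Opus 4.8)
The plan is to assemble the statement from the toolbox of transforms built up in Lemmas \ref{lem:min_in_W}, \ref{lem:transf_tilde_dag}, \ref{lem:rearrange_odd} and \ref{lem:localization_spin_switch}, together with a direct-method argument for the existence claim. For part $(i)$, I would take a minimizing sequence $(\theta_n)$ in $\mathscr{W}_{\neq}$ and, using Lemmas \ref{lem:min_in_W} and \ref{lem:transf_tilde_dag}, replace each $\theta_n$ by a non-decreasing function in $W$ with energy no larger; then by Lemma \ref{lem:localization_spin_switch} I may translate so that the set of zeros of $\theta_n$ meets the fixed interval $[-a,a]$. The energy bound gives a uniform $L^2$ bound on $\theta_n'$ and on $\cos\theta_n$, and since each $\theta_n$ is monotone with values in $[-\pi/2,\pi/2]$ and passes through $0$ inside $[-a,a]$, the family is equibounded and equicontinuous on compacts (via $|\theta_n(x)-\theta_n(x')|\le \|\theta_n'\|_{L^2}|x-x'|^{1/2}$). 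Arzelà–Ascoli plus a diagonal argument extracts a locally uniform limit $\theta_\infty$, monotone, valued in $[-\pi/2,\pi/2]$, with $\theta_\infty'\in L^2$ and $\cos\theta_\infty\in L^2$ by weak lower semicontinuity; the localization of the zero forces $\theta_\infty$ to actually connect $-\pi/2$ to $+\pi/2$ (this is exactly where the notch, i.e. $s\not\equiv 1$, prevents the transition from escaping to infinity), so $\theta_\infty\in W\subset\mathscr{W}_{\neq}$. Lower semicontinuity of both terms of $E_s$ (the potential term by Fatou, using $s\ge s_0$) gives $E_s(\theta_\infty)\le\liminf E_s(\theta_n)=\mathfrak m$, so $\theta_\infty$ is a minimizer; the Euler–Lagrange equation is \eqref{expr:SSTheta_infinite}, giving the last sentence of $(i)$.

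For part $(ii)$: given $\theta_1\in\mathscr{W}_{\neq}\setminus W$, apply Lemma \ref{lem:min_in_W} to get $\theta_\natural=T\circ\theta_1\in W$ with $E_s(\theta_\natural)<E_s(\theta_1)$, the inequality being strict precisely because $\theta_1\notin W$. Setting $\theta_0:=\theta_\natural$ finishes it; and since any minimizer $\theta$ of $E_s$ over $\mathscr{W}_{\neq}$ cannot have strictly larger energy than some competitor, $(ii)$ forces every minimizer to lie in $W$.

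For part $(iii)$: let $\theta$ be a minimizer in $W$. By Lemma \ref{lem:transf_tilde_dag}, $E_s(\theta^\dag)\le E_s(\widetilde\theta)\le E_s(\theta)$ with $\theta^\dag$ non-decreasing in $W$; minimality forces equalities throughout, hence (by the equality cases recorded in that lemma) $\theta=\widetilde\theta=\theta^\dag$ is non-decreasing. To upgrade "non-decreasing" to "increasing", I would argue by contradiction: if $\theta$ were constant on some nondegenerate interval $[c,d]$, then on that interval $\theta\equiv\alpha$ for some $\alpha\in[-\pi/2,\pi/2]$, and since $\theta$ is a $\mathcal{C}^1$ solution of the ODE \eqref{expr:SSTheta_infinite} (steady states lift to $\mathcal C^1$ functions by Lemma \ref{lem:steadystate_planar}), plugging $\theta'\equiv 0$ into the equation gives $\cos\alpha\sin\alpha=0$ on $[c,d]$, i.e. $\alpha\in\{0,\pm\pi/2\}$; a Cauchy–Lipschitz / unique-continuation argument for the second-order ODE with $\theta(c)=\alpha,\theta'(c)=0$ then forces $\theta\equiv\alpha$ on all of $\R$, contradicting the boundary conditions $\theta(\pm\infty)=\pm\pi/2$ (the cases $\alpha=\pm\pi/2$ are excluded because then $\theta$ would be identically $\pm\pi/2$, again contradicting the opposite limits). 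Hence $\theta$ is strictly increasing.

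For part $(iv)$: if moreover $s\in\mathscr A_a(\Omega)$ and $s\not\equiv 1$, take any minimizer $\theta$; by $(ii)$–$(iii)$ it is a non-decreasing function in $W$, so Lemma \ref{lem:rearrange_odd} applies and yields $\theta^\ast\in W$ non-decreasing, odd, with $E_s(\theta^\ast)\le E_s(\theta)$ and strict inequality unless $\theta$ is odd. Minimality rules out the strict case, so $\theta$ is odd. The main obstacle is the compactness step in $(i)$: one must make fully rigorous that the normalized minimizing sequence does not concentrate its transition at spatial infinity — this is where the hypothesis $s\not\equiv 1$ and the localization Lemma \ref{lem:localization_spin_switch} are essential — and that the limiting function genuinely attains the boundary values $\pm\pi/2$ rather than a smaller amplitude; I would handle the latter by noting that $\theta_\infty$ is monotone with $\theta_\infty'\in L^2$, so it has limits $\ell_\pm$ at $\pm\infty$, and using the zero-localization together with a lower bound on the energy of any sub-amplitude transition (or simply the fact that $\cos\theta_\infty\in L^2$ forces $\ell_\pm\in\{\pm\pi/2\}\pmod\pi$, combined with the range constraint $[-\pi/2,\pi/2]$) to pin down $\ell_\pm=\pm\pi/2$.
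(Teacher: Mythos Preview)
Your proposal is correct and follows essentially the same route as the paper: parts $(ii)$, $(iii)$, $(iv)$ are read off directly from Lemmas \ref{lem:min_in_W}, \ref{lem:transf_tilde_dag}, \ref{lem:rearrange_odd}, and for $(i)$ the paper likewise normalizes a minimizing sequence via these transforms plus Lemma \ref{lem:localization_spin_switch}, then passes to a weak/local-uniform limit and uses $\cos\theta_\infty\in L^2$ together with monotonicity and the pinned zero to recover the boundary values $\pm\pi/2$. The only cosmetic difference is that for the strict monotonicity in $(iii)$ the paper invokes the strong maximum principle, whereas you spell out the equivalent Cauchy--Lipschitz unique-continuation argument for the ODE \eqref{expr:SSTheta_infinite}.
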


\begin{proof}
    First, we point out that, from the assumption that $s \in \mathcal{U}_a (\mathbb{R})$ and $s \not \equiv 1$, $s \equiv 1$ outside some interval $[a,b]$ and is strictly below $1$ inside this interval.
    The property $(ii)$ is proved by Lemma \ref{lem:min_in_W}.
    For $(iii)$, we first use Lemma \ref{lem:transf_tilde_dag} in order to show that any minimizer is non-decreasing.
    Invoking now the strong maximum principle for elliptic equations, we get that any solution to~\eqref{expr:SS_infinite:div a grad formulation} cannot be constant on a non trivial interval without being constant on the whole $\R$. This is eventually contradictory with $\theta\in W$ so that a minimizer of $E_s$ in $W$ is necessarily increasing.
    Moreover, Lemma \ref{lem:rearrange_odd} leads to $(iv)$.
    
    In order to prove $(i)$, let $\theta_n \in \mathscr W_{\neq}$ be a minimizing sequence, that is $E_s(\theta_n) \to \mathfrak m$ (defined in \eqref{eq:min_prob}). Using the transformations in Lemmas \ref{lem:min_in_W} and \ref{lem:transf_tilde_dag}, we can assume that every $\theta_n$ belongs to $W$ and is non-decreasing.
    Moreover, due to Lemma \ref{lem:localization_spin_switch}, we can also assume that, for any $n \in \N$, we have some $x_n \in [a, b]$ such that $\theta_n (x_n) = 0$.
    Since $\norm{\theta_n'}_{L^2}^2 \leq \frac{2}{s_0} E_s (\theta_n)$, $\theta_n'$ is uniformly bounded in $L^2 (\mathbb{R})$. Therefore there exists $\theta \in H^1_{\textnormal{loc}}$ such that (up to a sub-sequence) $\theta_n'$ converges weakly to $\theta'$ in $L^2$ (and in particular $\theta' \in L^2 (\mathbb{R})$) and $\theta_n \rightarrow \theta$ in $\mathcal{C} (K)$ for any compact $K$.
    Moreover, since $f (x, u) = u^2 s(x)$ is convex in $u$, there holds
    \begin{equation} \label{eq:est_d_theta_n_lim}
        \int (\theta' (x))^2 \, s(x) \diff x \leq \liminf_{n\to +\infty} \int (\theta_n' (x))^2 \, s(x) \diff x.
    \end{equation}
    We must also point out that, since $x_n \in [a, b]$ is uniformly bounded, we have some $\overline{x} \in [a, b]$ such that (up to a further sub-sequence) $x_n \rightarrow \overline{x}$. From the uniform convergence of $\theta_n$ on compact sets and the fact that $\theta_n (x_n) = 0$, there holds $\theta (\overline{x}) = 0$. Moreover, every $\theta_n$ is non-decreasing and satisfies $\abs{\theta_n} \leq \frac{\pi}{2}$, which implies that $\theta$ is also non-decreasing and that $\abs{\theta} \leq \frac{\pi}{2}$ due to the uniform convergence on compact sets.
    
    On the other hand, we also have a uniform bound of $\cos \theta_n$ in $H^1$. Indeed, $(\cos \theta_n)' = - \theta_n' \sin \theta_n$, so that
    \begin{equation*}
        \norm{\cos \theta_n}_{H^1}^2 \leq \norm{\cos \theta_n}_{L^2}^2 + \norm{\theta_n'}_{L^2}^2 \leq \frac{2}{s_0} E_s (\theta_n).
    \end{equation*}
    This estimate proves that there exists $\zeta \in H^1 (\mathbb{R})$ such that (also up to a further sub-sequence) $\cos \theta_n$ converges towards $\zeta$ weakly in $H^1$ and in particular $\cos \theta_n \rightarrow \zeta$ in $\mathcal{C} (K)$ for any compact $K$.
    But we also have $\cos \theta_n \rightarrow \cos \theta$ in $\mathcal{C} (K)$ for any compact $K$, since $\cos$ is continuous. Thus, $\zeta = \cos \theta$.
    Similar arguments as for $\theta'$ yields
    \begin{equation} \label{eq:est_cos_theta_n_lim}
        \int \cos^2 ( \theta (x) ) \, s(x) \diff x \leq \liminf_{n\to +\infty} \int \cos^2 ( \theta_n (x) ) \, s(x) \diff x.
    \end{equation}

    In particular, we get $\cos \theta \in H^1 (\mathbb{R})$, which proves that $\lim_{\abs{x} \rightarrow \infty} \cos \theta_n (x) = 0$. But $\theta$ is non-decreasing and satisfies $\theta (\overline{x}) = 0$ and $\abs{\theta} \leq \frac{\pi}{2}$, we have $- \frac{\pi}{2} \leq \theta (x) \leq 0$ for $x \leq \overline{x}$ and $0 \leq \theta (x) \geq \frac{\pi}{2}$ for $x \geq \overline{x}$. Thus, it is now easy to get that $\lim_{\pm \infty} \theta = \pm \frac{\pi}{2}$. From all these properties, we get $\theta \in W$, and from \eqref{eq:est_d_theta_n_lim} and \eqref{eq:est_cos_theta_n_lim}
    \begin{align*}
        E_s (\theta) &= \int \Bigl[ (\theta' (x))^2 + \cos^2 ( \theta (x) ) \Bigr] \, s(x) \diff x \\
            &\leq \liminf_{n\to +\infty} \int (\theta_n' (x))^2 \, s(x) \diff x + \liminf_{n\to +\infty} \int \cos^2 ( \theta_n (x) ) \, s(x) \diff x \\
            &\leq \liminf_{n\to +\infty} \int \Bigl[ (\theta_n' (x))^2 + \cos^2 ( \theta_n (x) ) \Bigr] \, s(x) \diff x \\
            &\leq \liminf_{n\to +\infty} E_s (\theta_n) = \mathfrak{m}.
    \end{align*}
    Thus, $\theta$ is a minimum for $E_s (\theta)$ and $\boldsymbol{m}$ defined as \eqref{eq:def_lift} for any $\phi$ is a minimum of $E_s (\boldsymbol{m})$.
\end{proof}

\section{Uniqueness of the domain wall}\label{sec:uniqueness}

\subsection{First properties}

\subsubsection{Energy point-wise inequality}

It is notable that $\theta$ solving \eqref{expr:SSTheta_infinite} satisfies a point-wise energy-like property.

\begin{lemma} \label{lem:energy_eq_theta}
    For any $\theta \in \mathscr W$ satisfying \eqref{expr:SSTheta_infinite}, there holds
    \begin{equation} \label{expr:energy_eq_theta}
        \theta'^2 (y)-\cos^2 (\theta (y)) \geq 0 \quad \text{for a.e. } y \in \R.
    \end{equation}
    This inequality is an equality if $\abs{y} > a$. There also holds $\theta'^2 -\cos^2 (\theta) \not\equiv 0$ if $s(\cdot) \neq 1$ and if $\theta$ is not constant.
\end{lemma}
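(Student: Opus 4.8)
The plan is to control the pointwise ``energy'' $H(x):=\theta'(x)^2-\cos^2\theta(x)$, which is the Hamiltonian of the pendulum equation $\theta''+\cos\theta\sin\theta=0$ perturbed by the notch term $\tfrac{s'}{s}\theta'$. First I would record the regularity hidden in \eqref{expr:SSTheta_infinite}: rewriting it as $(s\theta')'=-s\cos\theta\sin\theta$, whose right--hand side is bounded, shows $s\theta'\in W^{1,\infty}_{\mathrm{loc}}$; since $1/s$ is Lipschitz ($s$ is Lipschitz and $\geq s_0>0$), $\theta'=(s\theta')/s\in W^{1,\infty}_{\mathrm{loc}}$, hence $\theta\in\mathcal C^1$ and $H$ is locally Lipschitz. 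Differentiating a.e. and inserting the a.e. form $\theta''=-\tfrac{s'}{s}\theta'-\cos\theta\sin\theta$ of the equation, the pendulum terms cancel and one is left with the single identity
\[
    H'(x)=-2\,\frac{s'(x)}{s(x)}\,\theta'(x)^2 \qquad\text{for a.e. }x\in\R .
\]

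From this identity the two parts of the first statement follow by a monotonicity analysis. On $\R\setminus[-a,a]$ we have $s\equiv1$, so $H'=0$ and $H$ is constant on each half--line $(-\infty,-a)$ and $(a,+\infty)$; but $\theta\in\mathscr W$ gives $\lvert H\rvert\le\theta'^2+\cos^2\theta\in L^1(\R)$, and a constant that is integrable over a half--line must vanish. Hence $H\equiv0$ on $\{\lvert x\rvert>a\}$ (the announced equality), and by continuity $H(-a)=H(a)=0$. On $[-a,a]$ is where the unimodality hypothesis $s\in\mathscr{U}_a(\mathbb R)$ enters: then $s'\le0$ on $(-a,0)$ and $s'\ge0$ on $(0,a)$, so by the identity $H'\ge0$ on $(-a,0)$ and $H'\le0$ on $(0,a)$. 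Thus $H$ rises from $H(-a)=0$ to $H(0)$ and falls back to $H(a)=0$, which forces $H\ge0$ on $[-a,a]$, and combined with the previous step $H\ge0$ everywhere.

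It remains to show $H\not\equiv0$ when $s\not\equiv1$ and $\theta$ is non--constant. Suppose $H\equiv0$, so $\theta'^2=\cos^2\theta$ identically (both sides are continuous) and, by the identity above, $s'\,\theta'^2=0$ a.e. Two cases. If $\cos\theta$ never vanishes then $\theta'$ never vanishes, so $s'=0$ a.e.; since $s$ is absolutely continuous and $\equiv1$ outside $[-a,a]$, this gives $s\equiv1$, which is excluded. Otherwise $\cos\theta(x_0)=0$ for some $x_0$, i.e. $\theta(x_0)\in\tfrac\pi2+\pi\Z$, and then $\theta'(x_0)=0$ as well; the constant function $\theta\equiv\theta(x_0)$ solves \eqref{expr:SSTheta_infinite} with the same Cauchy data at $x_0$, so uniqueness forces $\theta\equiv\theta(x_0)$, contradicting non--constancy. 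To justify this last uniqueness cleanly I would pass to the variable $y$ of \eqref{eq:def_y_change_var}, in which the equation becomes $\vartheta''+\sigma^2\cos\vartheta\sin\vartheta=0$ with $\sigma$ Lipschitz, so ordinary Cauchy--Lipschitz uniqueness applies; alternatively one works directly with the first--order system in $(\theta,s\theta')$, whose right--hand side is Lipschitz in the unknowns and merely $L^\infty$ in $x$, and invokes the Carath\'eodory form of Cauchy--Lipschitz.

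The computation of $H'$ and the monotonicity argument are essentially routine; the points that need care are the initial regularity bootstrap that makes $H'$ meaningful pointwise, the use of $\theta\in\mathscr W$ (the condition $\cos\theta\in L^2$) to pin the boundary value of $H$ to $0$, and — for the last assertion — the Carath\'eodory version of Cauchy--Lipschitz uniqueness, needed because $s'/s$ is only bounded. I regard ruling out $H\equiv0$, rather than proving the inequality itself, as the genuinely substantive step, and in the write--up I would emphasise that it is precisely the unimodality $s\in\mathscr{U}_a(\mathbb R)$ that drives the sign of $H$ through the sign of $s'$.
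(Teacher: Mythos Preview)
Your argument is correct and rests on the same three ingredients as the paper's proof: an energy identity coming from the ODE, the boundary values forced by $\theta\in\mathscr W$, and the unimodality of $s$ to control the sign. The execution, however, is noticeably different. The paper passes to the variable $y$ of \eqref{eq:def_y_change_var}, writes the equation as $\vartheta''+\sigma^2\cos\vartheta\sin\vartheta=0$, and after multiplying by $\vartheta'$ integrates from $\pm\infty$ to $y_1$; it then performs an integration by parts against the bounded-variation function $\sigma_{y_1}$ (essentially the distributional derivative of $\sigma^2$), which is where unimodality enters as a sign condition on the measure $\diff\sigma_{y_1}$. You instead stay in the $x$-variable, differentiate $H=\theta'^2-\cos^2\theta$ directly to get $H'=-2\tfrac{s'}{s}\theta'^2$, and read off the monotonicity of $H$ on $[-a,0]$ and $[0,a]$ straight from the sign of $s'$. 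Your route is shorter and avoids both the change of variable and the measure-theoretic integration by parts; the paper's detour through $y$ buys extra regularity for $\vartheta$ (namely $\vartheta''\in L^2$), which it uses elsewhere but is not really needed for this particular lemma. For the final assertion $H\not\equiv0$ the two proofs coincide: both reduce to $s'\theta'=0$ and use Cauchy--Lipschitz uniqueness to rule out a zero of $\theta'$; you are simply more explicit about the Carath\'eodory setting.
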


\begin{proof}
    Let $\vartheta (y) = \theta (x)$ where $y$ is defined in \eqref{eq:def_y_change_var}. Then $\vartheta$ is a weak solution to
    \begin{equation} \label{eq:ODE_vartheta}
        \vartheta'' + \sigma^2 \cos \vartheta \sin \vartheta = 0.
    \end{equation}
    We also know that $\cos \vartheta \in L^2$, so that $\vartheta'' \in L^2$, and the previous equality is true in $L^2$. We can therefore multiply this equality by $\vartheta' \in L^2$, so that
    \begin{equation*}
        \frac{\diff}{\diff y} (\vartheta')^2 - \sigma^2 \frac{\diff}{\diff y} \cos^2 (\vartheta) = 0.
    \end{equation*}
    This equality is true in $L^1$. Thus, integrating it on $(y_0, y_1)$ for any $y_0, y_1 \in \mathbb{R}$ leads to
    \begin{equation*}
        (\vartheta')^2 (y_1) - \int_{y_0}^{y_1} \sigma^2 (y) \frac{\diff}{\diff y} \cos^2 (\vartheta (y)) \diff y = (\vartheta')^2 (y_0).
    \end{equation*}
    Moreover, there also holds
    \begin{equation*}
        \int_{y_0}^{y_1} \sigma^2 (y_1) \frac{\diff}{\diff y} \cos^2 (\vartheta (y)) \diff y = \sigma^2 (y_1) \Bigl( \cos^2 (\vartheta (y_1)) - \cos^2 (\vartheta (y_0)) \Bigr),
    \end{equation*}
    so that
    \begin{equation*}
        (\vartheta')^2 (y_1) - \sigma^2 (y_1) \cos^2 (\vartheta (y_1)) - \int_{y_0}^{y_1} (\sigma^2 (y) - \sigma^2 (y_1)) \frac{\diff}{\diff y} \cos^2 (\vartheta (y)) \diff y = (\vartheta')^2 (y_0) - \sigma^2 (y_1) \cos^2 (\vartheta (y_0)).
    \end{equation*}
    Using the fact that $\vartheta' \in H^1$ and $\cos \vartheta \in H^1$, we know that, for $y_1$ fixed, the right-hand side goes to $0$ when $\abs{y_0} \rightarrow \infty$. Thus we get, for any $y_1 \in \mathbb{R}$,
    \begin{align*}
        (\vartheta')^2 (y_1) - \sigma^2 (y_1) \cos^2 (\vartheta (y_1)) - \int_{- \infty}^{y_1} (\sigma^2 (y) - \sigma^2 (y_1)) \frac{\diff}{\diff y} \cos^2 (\vartheta (y)) \diff y &= 0, \\
        (\vartheta')^2 (y_1) - \sigma^2 (y_1) \cos^2 (\vartheta (y_1)) + \int_{y_1}^{\infty} (\sigma^2 (y) - \sigma^2 (y_1)) \frac{\diff}{\diff y} \cos^2 (\vartheta (y)) \diff y &= 0.
    \end{align*}
    If $y_1 \leq 0$, define $\sigma_{y_1}$ by
    \begin{equation*}
        \sigma_{y_1} (y) \coloneqq 
        \begin{cases}
            \sigma^2 (y_1) - \sigma^2 (y) & \text{if } y < y_1, \\
            0 & \text{if } y \geq y_1.
        \end{cases}
    \end{equation*}
    Recall that $\sigma$ is equal to $1$ outside an interval denoted $[\mathfrak a_-, \mathfrak a_+]$, according to the proof of Lemma~\ref{lem:localization_spin_switch}.  From the properties of $\sigma$, we know that $\sigma_{y_1}$ is a non-decreasing function with bounded variation, which is constant for $y \leq \mathfrak{a}_-$ and for $y \geq y_1$. Thus, its derivative is a (non-negative) measure supported on $[\mathfrak{a}_-, y_1]$, that we call $\diff \sigma_{y_1}$, and for any non-negative continuous function $f$, there holds
    \begin{equation*}
        \int_{\mathbb{R}} \sigma_{y_1} (y) f'(y) \diff y = - \int f (y) \diff \sigma_{y_1} (y) \leq 0.
    \end{equation*}
    This inequality becomes an equality if $f$ is supported outside of $[\mathfrak{a}_-, y_1]$ (and in particular if $y_1 < \mathfrak{a}_-$).
    Thus, since $\vartheta$ is continuous (and then so is $\cos^2 \vartheta$), there holds
    \begin{equation*}
        \int_{- \infty}^{y_1} (\sigma^2 (y) - \sigma^2 (y_1)) \frac{\diff}{\diff y} \cos^2 (\vartheta (y)) \diff y = - \int_{\mathbb{R}} \sigma_{y_1} (y) \frac{\diff}{\diff y} \cos^2 (\vartheta (y)) \diff y \geq 0,
    \end{equation*}
    and the last inequality is an equality if $y_1 < \mathfrak{a}_-$.
    The case $y_1 \geq 0$ can be treated similarly, so that, for all $y_1 \in \mathbb{R}$,
    \begin{align*}
        (\vartheta')^2 (y_1) - \sigma^2 (y_1) \cos^2 (\vartheta (y_1)) &\geq 0 \\
        (\vartheta')^2 (y_1) - \sigma^2 (y_1) \cos^2 (\vartheta (y_1)) &= 0 \qquad \text{if } y_1 \leq \mathfrak{a}_- \; \text{ or } \; y_1 \geq \mathfrak{a}_+. 
    \end{align*}
    The conclusion is reached by coming back into the $x$ variable, along with the fact that $(\sigma (y))^{-1} \vartheta' (y) = \theta' (x)$ in $L^2$.

    As for the last property, if $\theta'^2 -\cos^2 (\theta) = 0$ in $L^1$, then from classic ODE techniques $\cos \theta$ and $\theta'$ can never vanish unless $\theta$ is constant and $\theta$ is actually smooth. Thus, differentiating this expression, we get
    \begin{equation*}
        \theta'' + \cos \theta \sin \theta = 0.
    \end{equation*}
    Then \eqref{expr:SS_infinite:div a grad formulation} becomes
    \begin{equation*}
        s' \theta' = 0 \quad \text{ in } \mathcal{D}'.
    \end{equation*}
    As $\theta'$ is smooth and never vanishes, there holds $s' \equiv 0$ in $\mathcal{D}'$, which means that $s$ is constant and is therefore equal to $1$.
\end{proof}

\subsubsection{Strict monotonicity of the critical points}

\begin{lemma} \label{lem:crit_pt_monotony}
    If $s \in \mathcal{A}_a (\mathbb{R})$ is a symmetric notch, then any critical point $\theta$ of the energy $E_s$ in ${\mathscr W}_{\neq}$ is strictly monotone.
\end{lemma}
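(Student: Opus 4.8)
The plan is to exploit the pointwise energy inequality of Lemma~\ref{lem:energy_eq_theta}, which forces $\theta'$ to vanish only where $\cos\theta$ vanishes, and then to invoke uniqueness for the Sturm--Liouville ODE to turn such a vanishing into a contradiction.

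First I would collect the regularity of a critical point $\theta$ of $E_s$ in $\mathscr{W}_{\neq}$. Being a critical point, $\theta$ solves the weak form of \eqref{expr:SS_infinite:div a grad formulation}, i.e. $(s\theta')' = s\cos\theta\sin\theta$ in $\mathcal{D}'(\R)$; since the right-hand side lies in $L^\infty$ and $1/s$ is Lipschitz (recall $s\in W^{1,\infty}$, $s\ge s_0$), a short bootstrap gives $\theta\in C^{1,1}_{\mathrm{loc}}(\R)$ and \eqref{expr:SSTheta_infinite} holds a.e. In particular $\theta'$ is continuous, so by Lemma~\ref{lem:energy_eq_theta} (applicable since $\mathscr{A}_a\subset\mathscr{U}_a$) the inequality $\theta'(x)^2\ge\cos^2\theta(x)$ holds for every $x\in\R$, not merely a.e.

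Next, suppose for contradiction that $\theta$ is not strictly monotone. A continuous injective function on $\R$ is strictly monotone, so $\theta$ is not injective, and Rolle's theorem produces a point $x^\ast$ with $\theta'(x^\ast)=0$. The pointwise inequality above then forces $\cos\theta(x^\ast)=0$, hence $\theta(x^\ast)=\tfrac{\pi}{2}+k\pi$ for some $k\in\Z$. Writing \eqref{expr:SSTheta_infinite} as the first-order system $u'=v$, $v'=-\tfrac{s'(x)}{s(x)}\,v-\cos u\sin u$, whose right-hand side is Carath\'eodory and Lipschitz in $(u,v)$ locally uniformly in $x$ (here we use $s'/s\in L^\infty$), the Cauchy problem with data $(\tfrac{\pi}{2}+k\pi,\,0)$ at $x^\ast$ has a unique solution; since both the constant $(\tfrac{\pi}{2}+k\pi,0)$ and $(\theta,\theta')$ solve it, we conclude $\theta\equiv\tfrac{\pi}{2}+k\pi$ on $\R$. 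This contradicts $\theta\in\mathscr{W}_{\neq}$, because a constant function cannot have limit $-\tfrac{\pi}{2}$ at $-\infty$ and a limit lying in $\tfrac{\pi}{2}+2\pi\N$ at $+\infty$. Hence $\theta$ is strictly monotone.

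The only delicate point is checking that the ODE-uniqueness step is valid at the regularity available ($s\in W^{1,\infty}$, $\theta\in C^{1,1}_{\mathrm{loc}}$); this is exactly the setting of the Carath\'eodory form of the Cauchy--Lipschitz theorem, so nothing more is required. I note in passing that the symmetry of $s$ enters only through the hypotheses of Lemma~\ref{lem:energy_eq_theta}.
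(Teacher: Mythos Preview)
Your proof is correct and follows essentially the same strategy as the paper: use the pointwise inequality $\theta'^2\ge\cos^2\theta$ from Lemma~\ref{lem:energy_eq_theta} to force $\cos\theta(x^\ast)=0$ at any zero of $\theta'$, then invoke ODE uniqueness to conclude $\theta$ would be constant, contradicting $\theta\in\mathscr{W}_{\neq}$. The only technical difference is that the paper passes through the change of variable~\eqref{eq:def_y_change_var} to obtain the smoother equation $\vartheta''+\sigma^2\cos\vartheta\sin\vartheta=0$ (so that classical Cauchy--Lipschitz applies directly), whereas you bootstrap $\theta\in C^{1,1}_{\mathrm{loc}}$ in the original variable and appeal to the Carath\'eodory version of uniqueness; both routes are valid and equally short.
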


\begin{proof}
    The functions $\vartheta$ and $\sigma$ as defined in \eqref{eq:change_var_theta} satisfy \eqref{eq:ODE_vartheta}.
    This shows that $\vartheta'' \in L^\infty (\mathbb{R})$. We already know that $\theta \in L^\infty (\mathbb{R})$ from $\theta' \in L^2 (\mathbb{R})$ and $\lim_{\pm \infty} \abs{\theta} < \infty$, so that $\vartheta \in L^\infty (\mathbb{R})$. Therefore $\vartheta \in W^{2, \infty} (\mathbb{R})$, and in particular $\vartheta' \in \mathcal{C} (\mathbb{R})$.
    On the other hand, we also know from \eqref{expr:energy_eq_theta} that $\theta'^2 - \cos^2 \theta$ is non-negative in $\mathbb{R}$. We also have
    \begin{equation} \label{eq:vartheta_deriv}
        \theta' (x) = \frac{1}{s(x)} \vartheta' \bigg(\int_0^x\frac{\diff u}{s(u)}\bigg).
    \end{equation}
    Thus we get for all $y \in \mathbb{R}$
    \begin{equation} \label{ineq:energy_crit_pt}
        \vartheta'^2 (y) \geq \sigma (y)^2 \cos^2 \vartheta (y).
    \end{equation}
    From this, we know that $\vartheta'$ does not change sign. Indeed, if so, then, from the continuity of $\vartheta'$, we have $y_0 \in \mathbb{R}$ such that $\vartheta' (y_0) = 0$, which leads to $\cos \vartheta (y_0) = 0$ with \eqref{ineq:energy_crit_pt}. Solving the Cauchy problem of \eqref{eq:ODE_vartheta} with this initial condition leads to $\vartheta = cste$, which is in contradiction with the limits at $\pm \infty$ of $\theta$. Therefore, from \eqref{eq:vartheta_deriv}, $\theta'$ has the same sign all over $\mathbb{R}$, which leads to the conclusion. 
\end{proof}

\subsubsection{Strict coercivity of a Schrödinger operator}\label{sec:schro}
	
In \cite[Theorem 1]{book_carbou} and \cite[Theorem 1.1]{carbou2018stabilization}, existence of a (stationary) solution to Problem~\eqref{expr:SSTheta_infinite} is proved for admissible sections $s\in \mathscr{A}_a(\R)$. Furthermore, this solution is stable and asymptotically stable modulo rotations around the wire axis. The proof of this result rests upon the rewriting of System~\eqref{eq:LLG} in the mobile frame $(\boldsymbol{M}_0(x),\boldsymbol{M}_1(x),\boldsymbol{M}_2)$, with
\begin{equation*}
\boldsymbol{M}_0(x)=\begin{pmatrix}
\sin \theta(x)\\ \cos \theta(x)\\ 0
\end{pmatrix}, \quad
\boldsymbol{M}_1(x)=\begin{pmatrix}
-\cos \theta(x)\\ \sin \theta(x)\\ 0
\end{pmatrix}, \quad
\boldsymbol{M}_1=\begin{pmatrix}
0\\ 0\\1
\end{pmatrix},
\end{equation*}
where $\theta$ solves \eqref{expr:SSTheta_infinite}. Furthermore, by introducing $\boldsymbol{r}=[r_1,r_2]^\top$ such that
\begin{equation*}
\boldsymbol{m}(t,x)=\sqrt{1-|\boldsymbol{r}(t,x)|^2}\boldsymbol{M}_0(x)+r_1(t,x)\boldsymbol{M}_1(x)+r_2(t,x)\boldsymbol{M}_2,
\end{equation*}
one shows that Eq.~\eqref{eq:LLG} can be recast as
\begin{equation*}
\frac{\partial}{\partial t}\begin{pmatrix} r_1\\ r_2\end{pmatrix}=\begin{pmatrix} -L_1 & -L_2\\ L_1 & -L_2\end{pmatrix}\begin{pmatrix} r_1\\ r_2\end{pmatrix}+R(\boldsymbol{r}),
\end{equation*}
where $R$ is shown to be a remainder term in the stability analysis and 
\begin{equation*}
\left\{\begin{array}{l}
L_1(r)\coloneqq-\partial_{xx}r-\frac{s'}{s}\partial_xr +(\sin^2\theta-\cos ^2\theta)r, \\
L_2(r)\coloneqq-\partial_{xx}r-\frac{s'}{s}\partial_xr +(\sin^2\theta-\theta'^2)r.
\end{array}\right.
\end{equation*}
On $L^2(\R)$, let us introduce the weighted inner product $\langle \cdot,\cdot\rangle_s$ given by
\begin{equation*}
\langle u,v\rangle_s=\int_\R u(x)v(x)\, s(x)\diff x .
\end{equation*}
We recall some properties of $L_1$ and $L_2$ proved in \cite{carbou2018stabilization}.

\begin{lemma} \label{lem:prop_L1_L2}
    If $\theta$ is such that $\theta' \in L^2$ and satisfies \eqref{expr:SSTheta_infinite} and $\theta (x) \in (- \frac{\pi}{2}, \frac{\pi}{2})$ for all $x \in \mathbb{R}$, then
    \begin{itemize}
        \item $L_2=\ell^*\ell$ where $\ell=\partial_x+\theta' \tan\theta$, and $L_2$ is non-negative with $\operatorname{ker}L_2=\R (\cos \theta)$.
        \item $L_1$ is a positive definite operator, and
        \begin{equation}\label{estim:stab:L1}
        \exists \alpha>0\quad \mid\quad \forall u\in H^2(\R), \quad  \langle L_1u,u\rangle_s\geq \alpha \Vert u\Vert_s^2, 
        \end{equation}
        where $\Vert u\Vert_s^2=\langle u,u\rangle_s$.
    \end{itemize} 
\end{lemma}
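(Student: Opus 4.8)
I will establish the two claimed properties of $L_1$ and $L_2$ separately, working with the weighted inner product $\langle\cdot,\cdot\rangle_s$ throughout. The factorization of $L_2$ is a direct computation: expanding $\ell^*\ell$ where $\ell = \partial_x + \theta'\tan\theta$ and $\ell^*$ is its adjoint with respect to $\langle\cdot,\cdot\rangle_s$, I expect the zeroth-order term to reassemble into $\sin^2\theta - \theta'^2$ once I use the equation \eqref{expr:SSTheta_infinite} satisfied by $\theta$ to replace $\theta''$ (or rather $(s\theta')'/s$) by $-\cos\theta\sin\theta$. Since $\theta(x)\in(-\pi/2,\pi/2)$, $\tan\theta$ is smooth and bounded on compacts, so all manipulations are legitimate. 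Non-negativity of $L_2 = \ell^*\ell$ is then immediate, and $\ker L_2 = \ker\ell$; solving the first-order ODE $u' + \theta'\tan\theta\, u = 0$ gives $u = c\cos\theta$, so $\ker L_2 = \R(\cos\theta)$. I should check $\cos\theta \in L^2$ (true since $\theta\in\mathscr W$) so that this kernel element genuinely lies in the relevant space.

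**Positivity of $L_1$.** The operator $L_1 = -\partial_{xx} - \tfrac{s'}{s}\partial_x + (\sin^2\theta - \cos^2\theta)$ is self-adjoint for $\langle\cdot,\cdot\rangle_s$, and $\langle L_1 u, u\rangle_s = \int_\R \big((u')^2 + (\sin^2\theta - \cos^2\theta)u^2\big)s\,dx$. The idea is to compare $L_1$ with $L_2$: since $\theta'^2 \geq \cos^2\theta$ pointwise by Lemma \ref{lem:energy_eq_theta}, we have $\sin^2\theta - \cos^2\theta \geq \sin^2\theta - \theta'^2$, hence $\langle L_1 u,u\rangle_s \geq \langle L_2 u,u\rangle_s \geq 0$; moreover the gap is $\int(\theta'^2 - \cos^2\theta)u^2 s\,dx$, which is strictly positive whenever $u$ does not vanish on the (positive-measure) set where $\theta'^2 > \cos^2\theta$ — and by the last assertion of Lemma \ref{lem:energy_eq_theta} this set is non-trivial when $s\not\equiv 1$. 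This already gives $L_1 > 0$ (strict positivity), but not yet the spectral gap $\alpha$. For the gap, I would argue by contradiction: suppose no such $\alpha$, so there is a sequence $u_n$ with $\|u_n\|_s = 1$ and $\langle L_1 u_n, u_n\rangle_s \to 0$; then $\langle L_2 u_n,u_n\rangle_s = \|\ell u_n\|_s^2 \to 0$ and $\int(\theta'^2-\cos^2\theta)u_n^2 s \to 0$. Concentration-compactness or a direct localization argument should force $u_n$ to converge (weakly, then strongly after handling the behavior at infinity) to some $u$ with $\ell u = 0$, i.e. $u \in \R(\cos\theta)$, while simultaneously $\int(\theta'^2-\cos^2\theta)u^2 s = 0$ forces $u\equiv 0$ on a set of positive measure; since $\cos\theta$ never vanishes on an interval, $u\equiv 0$, contradicting $\|u\|_s = 1$. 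The decay of $\theta$ toward $\pm\pi/2$ (so $\cos^2\theta$, $\theta'^2 \to 0$) means the essential spectrum of $L_1$ sits at $1 = \sin^2(\pm\pi/2) - \cos^2(\pm\pi/2)$, so the infimum of the spectrum is either a positive eigenvalue or equals $1$; either way it is positive, which is the cleanest route.

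**Main obstacle.** The genuinely delicate point is the spectral gap \eqref{estim:stab:L1} rather than mere positivity: controlling the bottom of the spectrum requires ruling out a minimizing sequence escaping to infinity, which is where the analysis of the essential spectrum via the limits $\theta(\pm\infty)=\pm\pi/2$ enters. Since the paper attributes this to \cite{carbou2018stabilization}, I would lean on that reference for the gap and present the factorization of $L_2$, the identification of its kernel, and the pointwise comparison $L_1 \geq L_2$ (plus strict positivity from Lemma \ref{lem:energy_eq_theta}) as the self-contained part, citing \cite{carbou2018stabilization} for the quantitative coercivity constant $\alpha$. The one subtlety to be careful about is that Lemma \ref{lem:energy_eq_theta} is stated for the rearranged equation and gives $\theta'^2 - \cos^2\theta \geq 0$ with equality outside $[-a,a]$; I must confirm this transfers correctly under the change of variables so that the comparison $L_1 \geq L_2$ holds in the original $x$ variable with the weight $s(x)\,dx$, which it does since both sides of the defining quadratic forms transform consistently.
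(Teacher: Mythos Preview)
Your proposal is correct and follows essentially the same approach as the paper: factorize $L_2=\ell^*\ell$, identify $\ker L_2=\R(\cos\theta)$ via the first-order ODE, write $L_1=L_2+(\theta'^2-\cos^2\theta)\operatorname{Id}$, and invoke Lemma~\ref{lem:energy_eq_theta} for the pointwise sign of the gap together with \cite{carbou2018stabilization} for the quantitative coercivity. Your discussion of the spectral gap (via essential spectrum or a minimizing-sequence argument) is more detailed than the paper's, which simply asserts that positivity and \eqref{estim:stab:L1} follow.
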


\begin{proof}
    One easily shows that for any $u$, $v$ in $H^2(\R)$, one has
    \begin{equation*}
    \langle L_2u,v\rangle_s=\langle \ell u,\ell v\rangle_s, \quad \text{with }\ell=\partial_x+\theta' \tan\theta .
    \end{equation*}
    This implies $L_2=\ell^*\ell$ and we infer that $L_2$ is non-negative. Moreover, it is easy to see that $L_2 \cos \theta = 0$, and it never vanishes on $\mathbb{R}$, which implies that $\operatorname{ker}L_2=\R (\cos \theta)$ as $L_2$ is a Schrödinger operator. It is also shown in \cite{carbou2018stabilization} that the essential spectrum of $L_2$ is $[1/2,+\infty)$.
    
    Regarding now $L_1$, observe that $L_1=L_2+(\theta'^2- \cos ^2\theta)\operatorname{Id}$.
    We also know that $\theta'^2-\cos^2\theta$ is non-negative in $\R$, but $\theta'^2-\cos^2\theta \not\equiv 0$ in $L^1$. 
    Thus we infer that $L_1$ is positive definite on $H^2(\R)$ and that \eqref{estim:stab:L1} holds.
\end{proof}

\begin{remark}
    The fact that $\operatorname{ker}L_2=\R (\cos \theta)$ corresponds to the invariance of the equation \eqref{eq:LLG} with respect to rotations around the nanowire axis.
\end{remark}

\begin{center}
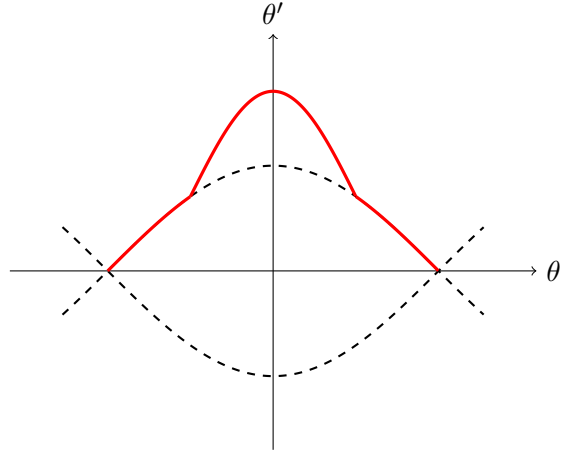
\begin{figure}[h!]
\centering
\begin{tikzpicture}[scale=1.4]
    \tikzset{cosStyle/.style={black, thick}}
    \tikzset{highlightStyle/.style={red, very thick}}

    \draw[->] (-2.5, 0) -- (2.5, 0) node[right] {$\theta$};
    \draw[->] (0, -1.7) -- (0, 2.25) node[above] {$\theta'$};

    \draw[domain=-2:2, samples=200, cosStyle,dashed] plot (\x, {cos(deg(\x))}) node[above right] {};
    \draw[domain=-2:2, samples=200, cosStyle,dashed] plot (\x, {-cos(deg(\x))}) node[below right] {};

    \draw[domain=-1.5708:-0.7854, samples=200, highlightStyle] plot (\x, {cos(deg(\x))});
    \draw[domain=-0.7854:0.7854, samples=200, highlightStyle] plot (\x, {sqrt(2)/2 + cos(2*deg(\x))});
    \draw[domain=0.7854:1.5708, samples=200, highlightStyle] plot (\x, {cos(deg(\x))});
\end{tikzpicture}
\caption{Phase portrait of a notched nanowire: (dashed line) separatrix lines are modified (continuous line). $\mathcal{D}_\eta$.}	
\end{figure}
\end{center}

\subsection{Critical point and strict local minimizer}

\begin{lemma} \label{lem:crit_pt_strict_loc_min}
    Each critical point of $E_s$ is a strict local minimum: for every $\theta_0$ critical point of $E_s$ in ${\mathscr W}_{\neq}$, there exists $\varepsilon > 0$ and $\alpha_2 > 0$ such that, for every $\theta \in H^1 (\mathbb{R})$ such that $\norm{\theta}_{L^\infty} < \varepsilon$, then $E_s (\theta_0 + \theta) - E_s (\theta_0) \geq \alpha_2 \norm{\theta}_s^2$.
\end{lemma}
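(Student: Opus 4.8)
The plan is to Taylor-expand $E_s$ about $\theta_0$ to second order, identify the quadratic part with $\tfrac12\langle L_1\cdot,\cdot\rangle_s$ — which is coercive by Lemma~\ref{lem:prop_L1_L2} — and absorb the cubic remainder using the smallness of $\norm{\theta}_{L^\infty}$. The first task is to check that Lemma~\ref{lem:prop_L1_L2} applies to $\theta_0$. Being a critical point, $\theta_0$ is a steady state, hence solves \eqref{expr:SSTheta_infinite}, and by Lemma~\ref{lem:energy_eq_theta} we have $\theta_0'^2-\cos^2\theta_0\geq0$; thus $\theta_0'$ can vanish only where $\cos\theta_0=0$, and at such a point Carathéodory uniqueness for \eqref{expr:SSTheta_infinite} would force $\theta_0$ to be constant, contradicting its prescribed limits at $\pm\infty$. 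So $\theta_0$ is strictly monotone, and increasing by its limits. Outside $[-a,a]$, where $s\equiv1$, \eqref{expr:SSTheta_infinite} reduces to the pendulum equation with first integral $\tfrac12\theta_0'^2-\tfrac14\cos2\theta_0$, which the limits at $\pm\infty$ force to equal $\tfrac14$; a short phase-plane argument on the two outer rays then shows that $\theta_0$ cannot attain $\pm\tfrac\pi2$ at a finite point and that $\lim_{+\infty}\theta_0=\tfrac\pi2$, so in fact $\theta_0(\R)\subseteq(-\tfrac\pi2,\tfrac\pi2)$. Hence Lemma~\ref{lem:prop_L1_L2} applies, giving $\langle L_1u,u\rangle_s\geq\alpha\norm{u}_s^2$ for $u\in H^2(\R)$; this extends to all $u\in H^1(\R)$ by density, since both sides are continuous on $H^1(\R)$ (recall $s_0\leq s\leq1$ and $\abs{\sin^2\theta_0-\cos^2\theta_0}\leq1$).

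Now fix $\theta\in H^1(\R)$. The kinetic part of $E_s(\theta_0+\theta)-E_s(\theta_0)$ expands exactly as $\int\theta_0'\theta' s+\tfrac12\int\theta'^2 s$. For the potential part, set $F(u):=\tfrac12\cos^2u$, so that $F'(u)=-\sin u\cos u$, $F''(u)=\sin^2u-\cos^2u$, $\abs{F'''}\leq2$; Taylor's formula with Lagrange remainder gives, pointwise,
\[ F(\theta_0+\theta)-F(\theta_0)=-\sin\theta_0\cos\theta_0\,\theta+\tfrac12(\sin^2\theta_0-\cos^2\theta_0)\theta^2+r,\qquad\abs{r}\leq\tfrac13\abs{\theta}^3, \]
where each term is integrable against $s\,\diff x$ (using $\sin\theta_0\cos\theta_0\in L^2$, $\theta\in L^2$, $\theta^2\in L^1$ and $\abs{\theta}^3\leq\norm{\theta}_{L^\infty}\theta^2$). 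Adding the two contributions and recognizing $\int\theta'^2 s+\int(\sin^2\theta_0-\cos^2\theta_0)\theta^2 s=\langle L_1\theta,\theta\rangle_s$ (the quadratic form associated with $L_1$), we get
\[ E_s(\theta_0+\theta)-E_s(\theta_0)=\Big(\int\theta_0'\theta' s-\int\sin\theta_0\cos\theta_0\,\theta\, s\Big)+\tfrac12\langle L_1\theta,\theta\rangle_s+\int r\,s. \]

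The parenthesis is precisely $\diff E_s(\theta_0)[\theta]$, which vanishes because $\theta_0$ is a critical point (the Euler--Lagrange identity holds for test functions, hence for $\theta\in H^1(\R)$ by continuity and density). Using the coercivity established above for the quadratic term and $\bigl|\int r\,s\bigr|\leq\tfrac13\norm{\theta}_{L^\infty}\norm{\theta}_s^2$ for the remainder,
\[ E_s(\theta_0+\theta)-E_s(\theta_0)\;\geq\;\Bigl(\tfrac{\alpha}{2}-\tfrac13\norm{\theta}_{L^\infty}\Bigr)\norm{\theta}_s^2, \]
so it suffices to take, say, $\varepsilon:=\tfrac{3\alpha}{4}$ (or any smaller value) and $\alpha_2:=\tfrac{\alpha}{4}$. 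The only genuinely delicate ingredient is the first step: one must ensure that \emph{every} critical point in $\mathscr W_{\neq}$ has its range in the \emph{open} interval $(-\tfrac\pi2,\tfrac\pi2)$, for otherwise the factorization $L_2=\ell^\ast\ell$ with $\ell=\partial_x+\theta_0'\tan\theta_0$, and hence the spectral gap of $L_1$ furnished by Lemma~\ref{lem:prop_L1_L2}, would not even be meaningful. Once this is secured, the Taylor expansion and the cubic estimate are entirely routine.
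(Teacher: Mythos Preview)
Your proof is correct and follows the same route as the paper's: Taylor-expand $E_s$ about $\theta_0$, identify the Hessian with $\langle L_1\cdot,\cdot\rangle_s$, invoke the coercivity of $L_1$ from Lemma~\ref{lem:prop_L1_L2}, and absorb the cubic remainder by $\|\theta\|_{L^\infty}$. The constants differ only cosmetically (the paper uses an integral Taylor remainder and ends with $\varepsilon=\alpha$, $\alpha_2=\alpha/6$).

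Where you go beyond the paper is in the first paragraph: the paper simply cites Lemma~\ref{lem:prop_L1_L2} without checking its hypothesis $\theta_0(\R)\subset(-\tfrac\pi2,\tfrac\pi2)$, whereas you supply an argument for it (strict monotonicity via Lemma~\ref{lem:energy_eq_theta} plus Carathéodory uniqueness, then the separatrix identity $\theta_0'^2=\cos^2\theta_0$ on the outer rays). One small caveat: as stated for all of $\mathscr W_{\neq}$, your phase-plane step tacitly assumes $\lim_{+\infty}\theta_0=\tfrac\pi2$ rather than some $\tfrac\pi2+2\pi k$; the separatrix argument on $(a,\infty)$ alone only pins the limit to the nearest half-integer multiple of $\pi$ above $\theta_0(a)$, and ruling out $\theta_0(a)\in(3\pi/2,5\pi/2)$ would need one more word (e.g.\ combine with the constraint $\lim_{+\infty}\theta_0\in\tfrac\pi2+2\pi\N$ to exclude the odd windows, then use monotonicity from $\theta_0(-a)\in(-\tfrac\pi2,\tfrac\pi2)$). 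In the paper this issue is moot since the lemma is only ever applied to critical points in $\mathscr W_0$ (indeed in $W$), for which your argument is complete. Your remark on extending \eqref{estim:stab:L1} from $H^2$ to $H^1$ by density is also a point the paper leaves implicit.
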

\begin{proof}
The fact that $E_s$ is three times differentiable is standard. Let us roughly compute its differentials. There holds
\begin{eqnarray*}
\diff E_s (\theta) . h &=& \int_\R(\theta' h's+\cos \theta\sin \theta hs)\, \diff x\\
&=& \int_\R h(-(s\theta')'+s\cos \theta\sin \theta)\, \diff x
\end{eqnarray*}
We thus infer that
\begin{eqnarray}\label{eq:d2_Es}
\diff^2 E_s (\theta) . (h_1,h_2) &=& \int_\R h_1\left(-(sh_2')'+sh_2\cos (2\theta)\right)\, \diff x=\langle L_1 h_1, h_2 \rangle_s
\end{eqnarray}
where the operator $L_1$ has been introduced in Section~\ref{sec:schro}, and finally,
\begin{eqnarray*}
\diff^3 E_s (\theta) . (h_1,h_2,h_3) &=& -2\int_\R h_1h_2h_3s\sin (2\theta)\, \diff x.
\end{eqnarray*}
 It follows that
    \begin{equation} \label{eq:est_d3_Hs}
        \abs{\diff^3 E_s (\theta) . (h_1, h_2, h_3)} \leq 2 \norm{h_1}_{L^2 (s(x) \diff x)} \norm{h_2}_{L^2 (s(x) \diff x)} \norm{h_3}_{L^\infty}
    \end{equation}
    Thus, there holds
    \begin{equation*}
        E_s (\theta_0 + \theta) - E_s (\theta_0) - \diff E_s (\theta_0) . \theta - \frac{1}{2} \diff^2 E_s (\theta_0) . (\theta, \theta) = \frac{1}{2} \int_0^1 (1-t)^2 \, \diff^3 E_s (\theta_0 + t \theta) (\theta, \theta, \theta) \diff s,
    \end{equation*}
    and, with \eqref{eq:est_d3_Hs} and the assumption $\norm{\theta}_{L^\infty} < \varepsilon$,
    \begin{align*}
        \abs{E_s (\theta_0 + \theta) - E_s (\theta_0) - E_s' (\theta_0) . \theta - \frac{1}{2} \diff^2 E_s (\theta_0) . (\theta, \theta)} &\leq \frac{1}{3} \norm{\theta}_{L^2 (s(x) \diff x)}^2 \norm{\theta}_{L^\infty} \\
            &\leq \frac{\varepsilon}{3} \norm{\theta}_{L^2 (s(x) \diff x)}^2 
    \end{align*}
    On the other hand, if $\theta_0$ is a critical point of $E_s$, then $E_s' (\theta_0) . \theta = 0$ and more precisely $\theta_0''+\frac{s'}{s}\theta_0'+\cos \theta_0 \sin \theta_0 = 0$. Then, from this property and Lemma \ref{lem:prop_L1_L2}, \eqref{eq:d2_Es} shows that
    \begin{equation*}
        E_s'' (\theta_0) . (\theta, \theta) > \alpha \norm{\theta}_{L^2 (s(x) \diff x)}^2.
    \end{equation*}
    By taking $\varepsilon = \alpha$, we get the conclusion with $\alpha_2 = \frac{\alpha}{6}$.
\end{proof}

\subsection{Transformation of the energy and convexity}

$E_s$ is not convex, because of $\cos^2 \theta$ in the potential energy, which is not convex itself. However, we have a way to make this function somehow convex. For this, we introduce the function
\begin{align*}
    L : \mathbb{R} \times (0, 1) &\rightarrow \mathbb{R} \\ 
    (y, z) &\mapsto \frac{y^2}{1 - z^2}.
\end{align*}
and the functional spaces, defined for any $x_0 \in \mathbb{R}$ by
\begin{gather} \label{eq:func_space_partial_new_energy_-}
    \mathcal{J}_{x_0}^- \coloneqq \{ \psi \in L^2 \cap H^1_\textnormal{loc} (-\infty, x_0) \, | \, 0 < \psi (x) < 1 \text{ for \textit{a.e.} } x \in (-\infty, x_0) \text{ and } L(\psi', \psi) \in L^1 (-\infty, x_0) \}, \\
    \mathcal{J}_{x_0}^+ \coloneqq \{ \psi \in L^2 \cap H^1_\textnormal{loc} (x_0, \infty) \, | \, 0 < \psi (x) < 1 \text{ for \textit{a.e.} } x \in (x_0, \infty) \text{ and } L(\psi', \psi) \in L^1 (x_0, \infty) \},
    \label{eq:func_space_partial_new_energy_+}
\end{gather}
from which we define the following partial functionals
\begin{gather} \label{eq:partial_new_energy_-}
    \mathcal{E}_{x_0}^- (\psi) = \frac{1}{2} \int_{- \infty}^{x_0} \Bigl( L(\psi' (x), \psi(x)) + \psi(x)^2 \Bigr) s(x) \diff x, \\
    \mathcal{E}_{x_0}^+ (\psi) = \frac{1}{2} \int_{x_0}^{\infty} \Bigl( L(\psi' (x), \psi(x)) + \psi(x)^2 \Bigr) s(x) \diff x,
    \label{eq:partial_new_energy_+}
\end{gather}
respectively defined on $\mathcal{J}_{x_0}^-$ and $\mathcal{J}_{x_0}^+$.
The reason why we introduce this functional is the fact that it is related to our initial energy $E_s (\theta)$, or more specifically to its partial integrals
\begin{gather}
    E_{s, x_0}^- \coloneqq \frac{1}{2} \int_{-\infty}^{x_0} \Bigl( \theta'(x)^2 + \cos^2\theta(x) \Bigr) s(x) \, \diff x, \\
    E_{s, x_0}^+ \coloneqq \frac{1}{2} \int_{x_0}^{\infty} \Bigl( \theta'(x)^2 + \cos^2\theta(x) \Bigr) s(x) \, \diff x.
\end{gather}

\begin{lemma} \label{lem:relation_energies}
    For any $\theta \in \mathscr{W}$ such that $- \frac{\pi}{2} < \theta (x) < 0$ for \textit{a.e.} $x \in (-\infty, x_0)$, there holds
    \begin{equation*}
        E_{s, x_0}^- (\theta) = \mathcal{E}_{x_0}^- (\cos \theta).
    \end{equation*}
    A similar property holds for $E_{s, x_0}^+$, with $0 < \theta (x) < \frac{\pi}{2}$ for \textit{a.e.} $x \in (x_0, \infty)$.
\end{lemma}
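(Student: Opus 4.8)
The plan is to carry out the pointwise substitution $\psi := \cos\theta$ on $(-\infty,x_0)$ and check that it transforms the integrand of $E_{s,x_0}^-$ into that of $\mathcal{E}_{x_0}^-$ termwise. First I would verify that $\psi$ is admissible, i.e. that $\psi \in \mathcal{J}_{x_0}^-$. Since $\theta \in \mathscr{W}$, the function $\theta$ is absolutely continuous (hence in $H^1_\textnormal{loc}$) and $\cos\theta \in L^2$; composing the $H^1_\textnormal{loc}$ function $\theta$ with the $\mathcal{C}^1$ map $\cos$ gives $\psi \in H^1_\textnormal{loc}(-\infty,x_0) \cap L^2$, with $\psi' = -\theta' \sin\theta$ a.e. by the Sobolev chain rule. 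The hypothesis $-\frac\pi2 < \theta(x) < 0$ a.e. forces $0 < \psi(x) = \cos\theta(x) < 1$ a.e., which is the range constraint in the definition of $\mathcal{J}_{x_0}^-$; in particular $1 - \psi^2 = \sin^2\theta > 0$ a.e., so $L(\psi',\psi)$ is well defined a.e.

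Next comes the key algebraic identity. From $\psi' = -\theta'\sin\theta$ and $\sin^2\theta = 1 - \cos^2\theta = 1 - \psi^2$ one gets $(\psi')^2 = (\theta')^2 \sin^2\theta = (\theta')^2 (1 - \psi^2)$ a.e., hence
\begin{displaymath}
    L(\psi', \psi) = \frac{(\psi')^2}{1 - \psi^2} = (\theta')^2 \qquad \text{a.e. on } (-\infty, x_0).
\end{displaymath}
Since $\theta' \in L^2(\R)$, this shows $L(\psi',\psi) = (\theta')^2 \in L^1(-\infty, x_0)$, which is the last condition needed for $\psi \in \mathcal{J}_{x_0}^-$. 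Combining this with the trivial identity $\cos^2\theta = \psi^2$, the two integrands coincide a.e.:
\begin{displaymath}
    \bigl(\theta'(x)^2 + \cos^2\theta(x)\bigr) s(x) = \bigl(L(\psi'(x), \psi(x)) + \psi(x)^2\bigr) s(x),
\end{displaymath}
and integrating over $(-\infty, x_0)$ and multiplying by $\tfrac12$ yields exactly $E_{s, x_0}^-(\theta) = \mathcal{E}_{x_0}^-(\cos\theta)$.

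The statement for $E_{s, x_0}^+$ is established in the same way on $(x_0, \infty)$: the only change is that $0 < \theta(x) < \frac\pi2$ there forces $\sin\theta > 0$, so again $\sin^2\theta = 1 - \psi^2$ with $\psi \in (0,1)$, and the identical computation applies. There is no real obstacle in this proof; the only two points deserving a word of care are the use of the Sobolev chain rule to differentiate $\cos\theta$, and the non-degeneracy of the denominator $1 - \psi^2$ — both of which are guaranteed precisely by the strictness of the bounds assumed on the range of $\theta$.
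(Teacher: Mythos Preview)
Your proof is correct and is precisely the direct substitution argument the paper has in mind; the paper in fact omits the proof entirely, stating only that it is obvious and left to the reader.
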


The proof is obvious and left to the reader. We also point out that, when $- \frac{\pi}{2} < \theta (x) < 0$, then $\theta (x) = - \arccos \cos \theta (x)$, whereas we have $\theta (x) = \arccos \cos \theta (x)$ when $0 < \theta (x) < \frac{\pi}{2}$.

We continue by some simple properties on $L$.

\begin{lemma}
    $L$ is a non-negative convex function on $\mathbb{R} \times (0,1)$.
\end{lemma}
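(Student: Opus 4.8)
The plan is to check the two assertions separately and almost by inspection. Non-negativity is immediate: for $(y,z)\in\mathbb{R}\times(0,1)$ one has $y^2\ge 0$ and $1-z^2>0$, so $L(y,z)\ge 0$, with equality exactly when $y=0$.

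For convexity, since the domain $\mathbb{R}\times(0,1)$ is a convex set it suffices to show that the Hessian of $L$ is positive semi-definite at every point. Writing $L=y^2(1-z^2)^{-1}$ and using $\partial_z(1-z^2)=-2z$, I would compute
\[
\partial_{yy}L=\frac{2}{1-z^2},\qquad \partial_{yz}L=\frac{4yz}{(1-z^2)^2},\qquad \partial_{zz}L=\frac{2y^2}{(1-z^2)^2}+\frac{8y^2z^2}{(1-z^2)^3}.
\]
Then $\partial_{yy}L>0$ on the whole domain, and a short simplification gives
\[
\partial_{yy}L\,\partial_{zz}L-(\partial_{yz}L)^2=\frac{4y^2}{(1-z^2)^3}\ge 0,
\]
so the $2\times 2$ Hessian matrix is positive semi-definite everywhere, hence $L$ is convex.

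A cleaner alternative, which I would mention but do not strictly need, is to observe that $(u,t)\mapsto u^2/t$ is jointly convex on $\mathbb{R}\times(0,\infty)$ (it is the perspective of the convex function $u\mapsto u^2$) and is non-increasing in $t$; composing it with the affine map $y\mapsto y$ in the first slot and the positive concave map $z\mapsto 1-z^2$ on $(0,1)$ in the second slot, the standard composition rules for convex functions immediately yield convexity of $L$. There is no real obstacle in this lemma; the only point worth a word is that the pointwise Hessian criterion applies precisely because $\mathbb{R}\times(0,1)$ is convex, and that the denominator stays bounded away from $0$ on compact subsets, which is why all the derivatives above are well defined.
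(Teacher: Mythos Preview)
Your proof is correct and follows essentially the same route as the paper: compute the Hessian entries exactly as the paper does and verify positive semi-definiteness via the determinant $\det\operatorname{Hess}L=\frac{4y^2}{(1-z^2)^3}\ge 0$ (you use $\partial_{yy}L>0$ where the paper uses $\operatorname{tr}\operatorname{Hess}L\ge 0$, both of which suffice for a $2\times 2$ symmetric matrix with non-negative determinant). Your alternative remark via the perspective function $(u,t)\mapsto u^2/t$ composed with the concave map $z\mapsto 1-z^2$ is a nice observation not present in the paper.
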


\begin{proof}
    From the definition of $L$, it is indeed positive as soon as $z^2 < 1$. As for the convexity, we point out that
    \begin{equation*}
        \operatorname{Hess} L (y, z) =
        \begin{pmatrix}
            \frac{2}{1-z^2} & \frac{4 y z}{(1-z^2)^2} \\
            \frac{4 y z}{(1-z^2)^2} & \frac{2 y^2}{(1-z^2)^2} + \frac{8 y^2 z^2}{(1-z^2)^3}.
        \end{pmatrix}
    \end{equation*}
    In particular, it is easy to see that its trace is non-negative for all $y \in \mathbb{R}$ and $z \in (0, 1)$. As for the determinant, we get
    \begin{align*}
        \operatorname{det} \operatorname{Hess} L (y, z) &= \frac{2}{1-z^2} \biggl[ \frac{2 y^2}{(1-z^2)^2} + \frac{8 y^2 z^2}{(1-z^2)^3} \biggr] - \biggl( \frac{4 y z}{(1-z^2)^2} \biggr)^2 \\
            &= \frac{4 y^2}{(1-z^2)^3} \geq 0.
    \end{align*}
    This proves that $\operatorname{Hess} L (y, z)$ is a non-negative symmetric form for all $y \in \mathbb{R}$ and $z \in (0, 1)$, and thus that $L$ is convex on this space.
\end{proof}

Such a property on $L$ induce the convexity of the partial functionals $\mathcal{E}^\pm_{x_0}$ on $\mathcal{J}^\pm_{x_0}$.

\begin{lemma} \label{lem:convexity_part_energy}
    $\mathcal{J}_{x_0}^\pm$ are convex sets and $\mathcal{E}_{x_0}^\pm$ are convex functionals on these respective sets.
\end{lemma}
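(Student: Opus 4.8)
The plan is to deduce both statements from the convexity of $L$ just established, together with the elementary convexity of $z \mapsto z^2$ and the non-negativity of the weight $s$; no further input is needed.

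First I would check convexity of $\mathcal{J}_{x_0}^-$ (the case of $\mathcal{J}_{x_0}^+$ being verbatim the same). Take $\psi_0, \psi_1 \in \mathcal{J}_{x_0}^-$ and $t \in [0,1]$, and set $\psi_t \coloneqq (1-t)\psi_0 + t\psi_1$. Since $L^2 \cap H^1_\textnormal{loc}(-\infty, x_0)$ is a vector space, $\psi_t$ belongs to it. Because $0 < \psi_0, \psi_1 < 1$ a.e.\ and a convex combination of points of the \emph{open} interval $(0,1)$ stays in $(0,1)$, we get $0 < \psi_t < 1$ a.e. Finally, by linearity of the weak derivative, $(\psi_t'(x), \psi_t(x)) = (1-t)(\psi_0'(x), \psi_0(x)) + t(\psi_1'(x), \psi_1(x))$ for a.e.\ $x \in (-\infty, x_0)$, so convexity of $L$ yields $0 \le L(\psi_t', \psi_t) \le (1-t) L(\psi_0', \psi_0) + t\, L(\psi_1', \psi_1)$; the right-hand side lies in $L^1(-\infty, x_0)$, and since $L \ge 0$ this forces $L(\psi_t', \psi_t) \in L^1(-\infty, x_0)$. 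Hence $\psi_t \in \mathcal{J}_{x_0}^-$.

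Next, keeping the same notation, I would bound the integrand of $\mathcal{E}_{x_0}^-(\psi_t)$ pointwise: convexity of $L$ controls the first term exactly as above, while convexity of $z \mapsto z^2$ gives $\psi_t(x)^2 \le (1-t)\psi_0(x)^2 + t\,\psi_1(x)^2$. Multiplying the resulting inequality for $L(\psi_t',\psi_t) + \psi_t^2$ by $s(x) \ge 0$ and integrating over $(-\infty, x_0)$ gives $\mathcal{E}_{x_0}^-(\psi_t) \le (1-t)\mathcal{E}_{x_0}^-(\psi_0) + t\,\mathcal{E}_{x_0}^-(\psi_1)$, which is precisely the convexity of $\mathcal{E}_{x_0}^-$. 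The argument for $\mathcal{E}_{x_0}^+$ is identical with $(x_0, \infty)$ in place of $(-\infty, x_0)$.

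There is essentially no real obstacle here; the only points deserving a word of care are that one must use convexity of the \emph{open} interval $(0,1)$ (so that $\psi_t$ genuinely satisfies the strict constraint defining $\mathcal{J}_{x_0}^\pm$), and that the pointwise domination of $L(\psi_t',\psi_t)$ by an $L^1$ function together with $L \ge 0$ must be invoked to ensure $\mathcal{E}_{x_0}^\pm(\psi_t)$ is well defined and finite \emph{before} the convexity inequality is stated.
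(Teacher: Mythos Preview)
Your proof is correct and follows essentially the same approach as the paper: both use the pointwise convexity of $L$ (together with $L \ge 0$) to get membership of $\psi_t$ in $\mathcal{J}_{x_0}^\pm$, then combine the convexity of $L$ and of $z \mapsto z^2$, multiply by $s(x)\ge 0$, and integrate to obtain the convexity of $\mathcal{E}_{x_0}^\pm$. Your remarks on the open interval $(0,1)$ and on establishing finiteness before stating the inequality are well placed and match the paper's logic.
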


\begin{proof}
    We will focus on $\mathcal{J}_{x_0}^-$ and $\mathcal{E}_{x_0}^-$. The proof can be easily adapted for $\mathcal{J}_{x_0}^+$ and $\mathcal{E}_{x_0}^+$.
    
    Let $\psi_0, \psi_1 \in \mathcal{J}_{x_0}^-$ and $\lambda \in (0, 1)$, and define $\psi_\lambda \coloneqq (1-\lambda) \psi_0 + \lambda \psi_1 \in L^2 \cap H^1_\textnormal{loc} (-\infty, x_0)$. Of course, $0 < \psi_\lambda (x) < 1$ for \textit{a.e.} $x \in (- \infty, x_0)$. We know that, due to the convexity of $L$,
    \begin{equation*}
        L(\psi_\lambda' (x), \psi_\lambda (x)) \leq (1-\lambda) L(\psi_0' (x), \psi_0 (x)) + \lambda L(\psi_1' (x), \psi_1 (x)).
    \end{equation*}
    Since $L$ is non-negative on $\mathbb{R} \times (0, 1)$, this shows that $L(\psi_\lambda', \psi_\lambda) \in L^1 (-\infty, x_0)$, which proves that $\psi_\lambda \in \mathcal{J}_{x_0}^\pm$, and that
    \begin{equation*}
        \int_{- \infty}^{x_0} L(\psi_\lambda' (x), \psi_\lambda (x)) s(x) \diff x \leq (1-\lambda) \int_{- \infty}^{x_0} L(\psi_0' (x), \psi_0 (x)) s(x) \diff x + \lambda \int_{- \infty}^{x_0} L(\psi_1' (x), \psi_1 (x)) s(x) \diff x.
    \end{equation*}
    On the other hand, we also have
    \begin{equation*}
        \int_{- \infty}^{x_0} (\psi_\lambda (x))^2 s(x) \diff x \leq (1 - \lambda) \int_{- \infty}^{x_0} (\psi_0 (x))^2 s(x) \diff x + \lambda \int_{- \infty}^{x_0} (\psi_1 (x))^2 s(x) \diff x.
    \end{equation*}
    Thus,
    \begin{equation*}
        \mathcal{E}_{x_0}^- (\psi_\lambda) \leq (1-\lambda) \mathcal{E}_{x_0}^- (\psi_0) + \lambda \mathcal{E}_{x_0}^- (\psi_1). \qedhere
    \end{equation*}
\end{proof}

\subsection{Construction of a particular path}

From such properties, we can construct some interesting paths connecting a critical point to a translated basic domain wall without notch, whose angle is given by $\theta_* (x) = \arctan (\sinh(x))$.

In particular, we have
\begin{equation*}
    \cos \theta_\ast (x) = \frac{1}{\cosh (x)}\qquad\text{and}\qquad\sin\theta_\ast(x)=\tanh(x).
\end{equation*}

\begin{lemma} \label{lem:part_path_1}
    Let $\theta_0 \in W$ be a critical point of $E_s$ and $x_0 \in \mathbb{R}$ such that $\theta_0 (x_0) = 0$. For any $\lambda \in [0, 1]$ and $x \in \mathbb{R}$, we define
    \begin{equation*}
        \mathcal{P}_\lambda (\theta_0) (x) = \operatorname{sgn} (x - x_0) \, \arccos \Bigl( (1 - \lambda) \cos \theta_0 (x) + \lambda \cos \theta_* (x - x_0) \Bigr).
    \end{equation*}
    Then $\lambda\mapsto \mathcal{P}_\lambda (\theta_0)$ is a path with values in $W$, continuous with respect to the $H^1$ norm, connecting $\theta_0$ and $\theta_* (\cdot - x_0)$, and for any $\lambda \in (0, 1)$,
    \begin{equation} \label{eq:convexity_path_1}
        E_s (\mathcal{P}_\lambda (\theta_0)) \leq (1 - \lambda) E_s (\theta_0) + \lambda E_s (\theta_* (\cdot - x_0)).
    \end{equation}
\end{lemma}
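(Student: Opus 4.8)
The plan is to exploit the convexity of the partial functionals $\mathcal{E}^\pm_{x_0}$ from Lemma~\ref{lem:convexity_part_energy} by transporting everything to the variable $z = \cos\theta$. First I would check that $\mathcal{P}_\lambda(\theta_0)$ is well-defined and lands in $W$. Set $z_\lambda(x) := (1-\lambda)\cos\theta_0(x) + \lambda\cos\theta_*(x-x_0)$. Since $\theta_0\in W$ is a critical point, by Proposition~\ref{prop:strict monotony}(iii) it is increasing with $\theta_0(\pm\infty) = \pm\pi/2$, so $\cos\theta_0 \in (0,1]$ on $\mathbb{R}$ with $\cos\theta_0(x_0)=1$; likewise $\cos\theta_*(x-x_0) = 1/\cosh(x-x_0) \in (0,1]$ with value $1$ at $x_0$. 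Hence $z_\lambda(x)\in(0,1]$ everywhere, equals $1$ exactly at $x=x_0$ (for $\lambda\in(0,1)$, since both terms are $<1$ away from $x_0$), and $\arccos z_\lambda$ is well-defined, continuous, vanishing only at $x_0$; multiplying by $\operatorname{sgn}(x-x_0)$ produces a continuous function on $\mathbb{R}$ with the correct limits $\pm\pi/2$ and with range in $[-\pi/2,\pi/2]$. The $H^1_\textnormal{loc}$ regularity and $L^2$ control of the derivative follow because on $(-\infty,x_0)$ we have $\mathcal{P}_\lambda(\theta_0) = -\arccos z_\lambda$ with $z_\lambda$ bounded away from $1$ on every compact subinterval of $(-\infty,x_0)$; the only delicate point is near $x_0$, handled below.

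Next I would apply Lemma~\ref{lem:relation_energies}. On $(-\infty,x_0)$ both $\theta_0$ and $\mathcal{P}_\lambda(\theta_0)$ take values in $(-\pi/2,0]$, and $\theta_*(\cdot-x_0)$ does too, so $E^-_{s,x_0}(\mathcal{P}_\lambda(\theta_0)) = \mathcal{E}^-_{x_0}(z_\lambda)$, $E^-_{s,x_0}(\theta_0) = \mathcal{E}^-_{x_0}(\cos\theta_0)$, and $E^-_{s,x_0}(\theta_*(\cdot-x_0)) = \mathcal{E}^-_{x_0}(\cos\theta_*(\cdot-x_0))$; symmetrically on $(x_0,\infty)$. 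Since $z_\lambda = (1-\lambda)\cos\theta_0 + \lambda\cos\theta_*(\cdot-x_0)$ is precisely the convex combination, Lemma~\ref{lem:convexity_part_energy} gives
\begin{equation*}
    \mathcal{E}^\pm_{x_0}(z_\lambda) \leq (1-\lambda)\,\mathcal{E}^\pm_{x_0}(\cos\theta_0) + \lambda\,\mathcal{E}^\pm_{x_0}(\cos\theta_*(\cdot-x_0)),
\end{equation*}
and summing the $-$ and $+$ contributions yields exactly \eqref{eq:convexity_path_1}. In particular the right-hand side is finite, which retroactively certifies that $L(z_\lambda', z_\lambda)\in L^1$, i.e. $z_\lambda \in \mathcal{J}^\pm_{x_0}$, closing the well-definedness loop and confirming $\mathcal{P}_\lambda(\theta_0)\in W$ (finite energy $\Rightarrow$ $\mathcal{P}_\lambda(\theta_0)' \in L^2$, $\cos\mathcal{P}_\lambda(\theta_0)\in L^2$).

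The remaining item is $H^1$-continuity of $\lambda\mapsto\mathcal{P}_\lambda(\theta_0)$ and the endpoint identifications. At $\lambda=0$, $z_0=\cos\theta_0$ so $\mathcal{P}_0(\theta_0)=\operatorname{sgn}(x-x_0)\arccos\cos\theta_0 = \theta_0$ (using monotonicity and $\theta_0(x_0)=0$); at $\lambda=1$, $\mathcal{P}_1(\theta_0)=\theta_*(\cdot-x_0)$ similarly. For continuity: $\lambda\mapsto z_\lambda$ is affine hence continuous into $L^2$ and into $H^1_\textnormal{loc}$, and since $z_\lambda$ is trapped in a fixed compact $[\delta,1]$ on compacts away from $x_0$ the map $z\mapsto\arccos z$ is Lipschitz there, giving $\mathcal{C}(K)$ and $H^1_\textnormal{loc}$ continuity; upgrading to $H^1(\mathbb{R})$ continuity uses the uniform energy bound \eqref{eq:convexity_path_1} (equi-integrability of the tails via $L^1$ control of $L(z_\lambda',z_\lambda)$, which dominates $|\mathcal{P}_\lambda(\theta_0)'|^2$) together with pointwise convergence.

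\textbf{Main obstacle.} The genuinely delicate point is the behavior near $x=x_0$, where $z_\lambda\to 1$ and $\arccos$ has a square-root singularity: one must verify that $\mathcal{P}_\lambda(\theta_0)$ is actually in $H^1$ across $x_0$ and that the quadratic-form identity $E^\pm_{s,x_0}(\mathcal{P}_\lambda(\theta_0)) = \mathcal{E}^\pm_{x_0}(z_\lambda)$ (Lemma~\ref{lem:relation_energies}) is applied legitimately, i.e. that $\mathcal{P}_\lambda(\theta_0)$ stays strictly inside $(-\pi/2,0)$ resp. $(0,\pi/2)$ on the open half-lines and the endpoint contributions match. This is handled precisely by the definition of $L$: near $x_0$, $|\mathcal{P}_\lambda(\theta_0)'|^2 = (z_\lambda')^2/(1-z_\lambda^2) = L(z_\lambda',z_\lambda)$, and finiteness of $\mathcal{E}^\pm_{x_0}(z_\lambda)$ — itself guaranteed by the convexity bound since the two endpoints have finite energy — forces $L(z_\lambda',z_\lambda)\in L^1$ near $x_0$, hence $\mathcal{P}_\lambda(\theta_0)'\in L^2$ there. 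So the singularity is only apparent, and the convexity inequality does the essential work.
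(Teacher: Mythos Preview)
Your argument is essentially the paper's: the convexity inequality \eqref{eq:convexity_path_1} is obtained exactly as in the paper by splitting $E_s$ into $E^\pm_{s,x_0}$, translating via Lemma~\ref{lem:relation_energies} to the functionals $\mathcal{E}^\pm_{x_0}$ in the variable $z=\cos\theta$, and applying the convexity from Lemma~\ref{lem:convexity_part_energy}.

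One correction: you invoke Proposition~\ref{prop:strict monotony}(iii) for the strict monotonicity of $\theta_0$, but that statement is for \emph{minimizers}, whereas $\theta_0$ here is only assumed to be a critical point; the paper uses Lemma~\ref{lem:crit_pt_monotony} instead, which covers arbitrary critical points. This matters because the lemma is later applied to a hypothetical second critical point that is \emph{not} a minimizer.

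For the $H^1$-continuity of $\lambda\mapsto\mathcal{P}_\lambda(\theta_0)$, the paper packages the argument through the appendix results (Lemma~\ref{lem:lipschitz lemma} for the signum multiplication, and Lemma~\ref{lem:composition ponctual estimates} with Corollary~\ref{coro:stability for L p} for the composition with $\arccos$), while you sketch a direct dominated-convergence argument. Both routes ultimately rest on the same observation: the pointwise convexity of $L$ gives the domination $|\mathcal{P}_\lambda(\theta_0)'|^2 = L(z_\lambda',z_\lambda) \le |\theta_0'|^2 + |\theta_*'(\cdot-x_0)|^2$, which handles the singularity at $x_0$ and the tails simultaneously. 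Your sketch is correct in spirit; the paper's appendix lemmas simply make the bookkeeping systematic.
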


\begin{proof}
    First, from Lemma \ref{lem:crit_pt_monotony}, we know that $- \frac{\pi}{2} < \theta_0 (x) < 0$ for all $x \in (- \infty, x_0)$ and $0 < \theta_0 (x) < \frac{\pi}{2}$ for all $x \in (x_0, \infty)$. The same property holds for $\theta_* (x - x_0)$ and for $\mathcal{P}_\lambda (\theta_0) (x)$. Then, using Lemma \ref{lem:relation_energies}, it is easy to see that $\cos \theta_0 \in \mathcal{J}_{x_0}^\pm$, and so is $\cos \theta_*$, therefore so is $\cos \mathcal{P}_\lambda (\theta_0) = (1 - \lambda) \cos \theta_0 + \lambda \cos \theta_*$ by the convexity of these spaces due to Lemma \ref{lem:convexity_part_energy}.
    Concerning the regularity of the map $\lambda\mapsto \mathcal{P}_\lambda(\theta_0)\in W$, the only point that may create a singularity is the multiplicative signum function. 
    This is not the case here as a consequence of Lemma~\ref{lem:lipschitz lemma} in Appendix~\ref{sec:append}. To apply this lemma, it is easy to check that for all $\lambda \in [0, 1]$, the function
    \begin{equation*}
        x\longmapsto\arccos \Bigl( (1 - \lambda) \cos \theta_0 (x) + \lambda \cos \theta_* (x - x_0) \Bigr),
    \end{equation*}
    vanishes when $x=x_0$. 
    As a consequence of Lemma \ref{lem:lipschitz lemma} but also Lemma \ref{lem:composition ponctual estimates} and its corollary \ref{coro:stability for L p} in Appendix~\ref{sec:append}, we can write  $\lambda\mapsto \mathcal{P}_\lambda(\theta_0)\in W$ as a composition of continuous functions (for $W$ being endowed with the $\dot{H}^1$ norm).
    
    For the estimate, decomposing $E_s (\mathcal{P}_\lambda (\theta_0))$ leads to
    \begin{align*}
        E_s (\mathcal{P}_\lambda (\theta_0)) &= E_{s, x_0}^- (\mathcal{P}_\lambda (\theta_0)) + E_{s, x_0}^+ (\mathcal{P}_\lambda (\theta_0)) \\
            &= \mathcal{E}_{x_0}^- (\cos \mathcal{P}_\lambda (\theta_0)) + \mathcal{E}_{x_0}^+ (\cos \mathcal{P}_\lambda (\theta_0)) \\
            &\leq \mathcal{E}_{x_0}^- ((1 - \lambda) \cos \theta_0 + \lambda \cos \theta_* (\cdot - x_0)) + \mathcal{E}_{x_0}^+ ((1 - \lambda) \cos \theta_0 + \lambda \cos \theta_* (\cdot - x_0)).
    \end{align*}
    By using Lemma \ref{lem:convexity_part_energy} and Lemma \ref{lem:relation_energies} once again, we get
    \begin{align*}
        E_s (\mathcal{P}_\lambda (\theta_0)) &\leq (1 - \lambda) \mathcal{E}_{x_0}^- (\cos \theta_0) + \lambda \mathcal{E}_{x_0}^- (\cos \theta_* (\cdot - x_0)) + (1 - \lambda) \mathcal{E}_{x_0}^+ (\cos \theta_0) + \lambda \mathcal{E}_{x_0}^+ (\cos \theta_* (\cdot - x_0)) \\
            &\leq (1 - \lambda) E_{s, x_0}^- (\theta_0) + \lambda E_{s, x_0}^- (\theta_* (\cdot - x_0)) + (1 - \lambda) E_{s, x_0}^+ (\theta_0) + \lambda E_{s, x_0}^+ (\theta_* (\cdot - x_0)) \\
            &\leq (1 - \lambda) E_s (\theta_0) + \lambda E_s (\theta_* (\cdot - x_0)). \qedhere
    \end{align*}
\end{proof}

\begin{lemma} \label{lem:energy_translatedDW}
    The function $\gamma \mapsto \theta_* (\cdot - \gamma)$, which takes values in $W$, is continuous with respect to the $H^1$ topology. Moreover, for any $s \in \mathscr{U}_a (\mathbb{R})$ such that $s(\cdot) \neq 1$ and for any $\gamma_* \in \mathbb{R}$,
    \begin{equation*}
        \max_{\gamma \in [- \gamma_*, \gamma_*]} E_s (\theta_* (\cdot - \gamma)) < E_1 (\theta_*).
    \end{equation*}
    Last,
    \begin{equation*}
        \lim_{\gamma \rightarrow \pm \infty} E_s (\theta_* (\cdot - \gamma)) = E_1 (\theta_*)
    \end{equation*}
\end{lemma}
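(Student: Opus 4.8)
The plan is to use the explicit form of $\theta_*$. Since $\cos\theta_*(x)=1/\cosh x$ and $\theta_*'(x)=1/\cosh x$, the separatrix identity $\theta_*'^2=\cos^2\theta_*$ holds \emph{everywhere}; consequently $\theta_*(\cdot-\gamma)\in W$ for every $\gamma$ (indeed $\theta_*'=\cos\theta_*=1/\cosh\in L^2$, $\theta_*(\pm\infty)=\pm\pi/2$, and $\overline{\theta_*(\mathbb R)}=[-\pi/2,\pi/2]$), and the integrand in $E_s(\theta_*(\cdot-\gamma))$ collapses, so that after the change of variables $y=x-\gamma$,
\begin{equation*}
E_s(\theta_*(\cdot-\gamma))=\int_\mathbb R \frac{s(x)}{\cosh^2(x-\gamma)}\,\diff x=\int_\mathbb R \frac{s(y+\gamma)}{\cosh^2 y}\,\diff y,
\qquad\text{and in particular}\qquad
E_1(\theta_*)=\int_\mathbb R \frac{\diff y}{\cosh^2 y}=2.
\end{equation*}
This formula is the heart of the argument; the three assertions reduce to elementary properties of the convolution-type integral $\gamma\mapsto\int_\mathbb R s(y+\gamma)\cosh^{-2}y\,\diff y$.

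\emph{Continuity.} Continuity of $\gamma\mapsto\theta_*(\cdot-\gamma)$ into $W$ reduces to the continuity of translations in $L^2$: the derivative part $\|\theta_*'(\cdot-\gamma)-\theta_*'(\cdot-\gamma')\|_{L^2}\to0$ as $\gamma\to\gamma'$ because $\theta_*'=1/\cosh\in L^2$ (same for $\cos\theta_*=1/\cosh$), while for the function part one writes $\theta_*(x)=\int_0^x \frac{\diff t}{\cosh t}$, so that $\theta_*(\cdot-\gamma)-\theta_*(\cdot-\gamma')$ is pointwise convergent to $0$ and, for $\gamma,\gamma'$ in a fixed bounded set, dominated by an exponentially decaying ($L^2$) function, whence dominated convergence applies. (Equivalently and more simply, $\gamma\mapsto E_s(\theta_*(\cdot-\gamma))$ is continuous by dominated convergence with the bound $s(y+\gamma)\cosh^{-2}y\le\cosh^{-2}y\in L^1$.)

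\emph{Strict inequality on compacts.} From the formula,
\begin{equation*}
E_1(\theta_*)-E_s(\theta_*(\cdot-\gamma))=\int_\mathbb R \frac{1-s(y+\gamma)}{\cosh^2 y}\,\diff y,
\end{equation*}
and since $1-s\ge 0$, $1-s\not\equiv 0$, and $1/\cosh^2 y>0$ for every $y$, this quantity is strictly positive for each fixed $\gamma$; thus $E_s(\theta_*(\cdot-\gamma))<E_1(\theta_*)$. By the continuity established above, $\gamma\mapsto E_s(\theta_*(\cdot-\gamma))$ attains its maximum on the compact set $[-\gamma_*,\gamma_*]$, and this maximum is $<E_1(\theta_*)$, which is the second claim.

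\emph{Limit at infinity.} For fixed $y$, one has $s(y+\gamma)=1$ as soon as $|\gamma|>a+|y|$, so $s(y+\gamma)\cosh^{-2}y\to\cosh^{-2}y$ pointwise as $|\gamma|\to\infty$; dominated convergence with the integrable bound $\cosh^{-2}y$ then gives $E_s(\theta_*(\cdot-\gamma))\to\int_\mathbb R\cosh^{-2}y\,\diff y=E_1(\theta_*)$. There is no genuine obstacle in this lemma: the only step requiring attention is noticing that $\theta_*'^2=\cos^2\theta_*$ turns the energy into the transparent integral above, after which everything is monotone/dominated convergence and $L^2$-continuity of translations.
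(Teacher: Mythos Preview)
Your proof is correct and follows essentially the same strategy as the paper's: both argue pointwise strict inequality from $s\le 1$, $s\not\equiv 1$, deduce the compact-maximum bound from continuity, use $L^2$-continuity of translations for the $H^1$ continuity (together with the exponential decay of $\theta_*$ to its limits), and obtain the limit at infinity by dominated convergence. The only cosmetic difference is that you exploit the explicit identity $\theta_*'=\cos\theta_*=1/\cosh$ to collapse the integrand to $s(x)/\cosh^2(x-\gamma)$, whereas the paper keeps the integrand in the abstract form $(\theta_*'^2+\cos^2\theta_*)\,s$; this makes your computations slightly more transparent but is not a genuinely different route.
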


\begin{proof}
    Since $s \leq 1$ and $\theta_*$ is strictly increasing (in particular it is not constant on $\supp (s-1)$, which is of positive measure), it is easy to show that $E_s (\theta_* (\cdot - \gamma)) < E_1 (\theta_* (\cdot - \gamma)) = E_1 (\theta_*)$.
    Moreover,
    \begin{align*}
        E_s (\theta_* (\cdot - \gamma)) &= \frac{1}{2} \int \Bigl( \abs{\partial_x \theta_* (x)}^2 + \cos^2 (\theta_* (x)) \Bigr) \, s (x + \gamma) \diff x \\
            &\underset{\abs{\gamma} \rightarrow \infty}{\longrightarrow} \frac{1}{2} \int \Bigl( \abs{\partial_x \theta_* (x)}^2 + \cos^2 (\theta_* (x)) \Bigr) \diff x,
    \end{align*}
    as it is easy to prove that, since $s - 1$ is compactly supported, there holds $s (\cdot - \gamma) - 1 \rightharpoonup 0$ as $\abs{\gamma} \rightarrow \infty$ for the Radon measure topology $\mathcal{M} (\mathbb{R})$ but also for the local uniform convergence.

    Also, there also holds $\partial_x \theta_* \in L^2 (\mathbb{R})$ and $\cos \theta_* \in L^2 (\mathbb{R})$, thus $\gamma \mapsto \partial_x \theta_* (\cdot - \gamma)$ and $\gamma \mapsto \cos \theta_* (\cdot - \gamma)$ are continuous with respect to the $L^2$ norm, which gives the continuity of $\gamma \mapsto E_s (\theta_* (\cdot - \gamma))$.
    We also point out that $\theta_*$ converges exponentially to its limits at $\pm \infty$, so that for any $\gamma \in \mathbb{R}$, $\theta_* (\cdot - \gamma) - \theta_* \in L^2$, and the continuity of $\gamma \mapsto \theta_* (\cdot - \gamma)$ with respect to the $L^2$ norm can be easily proven.
\end{proof}

We are now in position to bound from above the energy of the critical points by comparison with the notchless case:

\begin{lemma} \label{lem:ineq_energy_crit_pt}
    Any critical point $\theta_0$ of $E_s$ satisfies $E_s (\theta_0) \leq E_s (\theta_* (\cdot - x_0)) < E_1 (\theta_*)$, where $x_0 \in \mathbb{R}$ is such that $\theta_0 (x_0) = 0$.
\end{lemma}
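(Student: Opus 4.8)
The plan is to compare $\theta_0$ with a translated copy of the notchless domain wall $\theta_\ast$ along the ``convex'' path supplied by Lemma~\ref{lem:part_path_1}, and then to use that a critical point is a \emph{strict} local minimum (Lemma~\ref{lem:crit_pt_strict_loc_min}) to extract an inequality between energies. Concretely, let $\theta_0\in W$ be a critical point of $E_s$ (recall from the monotonicity analysis that a critical point is strictly monotone, so, connecting $-\tfrac\pi2$ to $\tfrac\pi2$, it lies in $W$), and pick $x_0\in\R$ with $\theta_0(x_0)=0$, which exists by the intermediate value theorem. Applying Lemma~\ref{lem:part_path_1} with this $x_0$ yields the path $\lambda\mapsto\mathcal{P}_\lambda(\theta_0)$, continuous from $[0,1]$ into $W$ for the $H^1$ norm, with $\mathcal{P}_0(\theta_0)=\theta_0$, $\mathcal{P}_1(\theta_0)=\theta_\ast(\cdot-x_0)$, and
\[
   E_s(\mathcal{P}_\lambda(\theta_0))\le(1-\lambda)\,E_s(\theta_0)+\lambda\,E_s(\theta_\ast(\cdot-x_0))\qquad(\lambda\in(0,1)).
\]

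Next I would invoke Lemma~\ref{lem:crit_pt_strict_loc_min}: there is $\varepsilon>0$ such that $E_s(\theta_0+\eta)\ge E_s(\theta_0)$ for every $\eta\in H^1(\R)$ with $\|\eta\|_{L^\infty}<\varepsilon$. The increment $\eta_\lambda\coloneqq\mathcal{P}_\lambda(\theta_0)-\theta_0$ belongs to $H^1(\R)$: its derivative is in $L^2$ since both functions lie in $W$, and near $\pm\infty$ the explicit $\arccos$ form of $\mathcal{P}_\lambda$ shows that $\eta_\lambda$ behaves like a multiple of $\cos\theta_0-\cos\theta_\ast(\cdot-x_0)\in L^2$, so $\eta_\lambda\in L^2(\R)$ too. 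Since $\eta_0=0$ and the path is $H^1$-continuous, $\|\eta_\lambda\|_{H^1}\to0$ as $\lambda\to0^+$, hence $\|\eta_\lambda\|_{L^\infty}\to0$ by the embedding $H^1(\R)\hookrightarrow L^\infty(\R)$. Therefore there is $\lambda_0>0$ with $E_s(\mathcal{P}_\lambda(\theta_0))\ge E_s(\theta_0)$ for all $\lambda\in(0,\lambda_0)$.

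Combining the last inequality with the displayed estimate gives, for $\lambda\in(0,\lambda_0)$, $E_s(\theta_0)\le(1-\lambda)E_s(\theta_0)+\lambda E_s(\theta_\ast(\cdot-x_0))$; subtracting $(1-\lambda)E_s(\theta_0)$ and dividing by $\lambda$ yields $E_s(\theta_0)\le E_s(\theta_\ast(\cdot-x_0))$. Finally, since $s\in\mathscr{U}_a(\R)$ with $s\not\equiv1$, Lemma~\ref{lem:energy_translatedDW} gives $E_s(\theta_\ast(\cdot-x_0))<E_1(\theta_\ast)$, which closes the chain $E_s(\theta_0)\le E_s(\theta_\ast(\cdot-x_0))<E_1(\theta_\ast)$. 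The only step demanding some care is the transition in the second paragraph from the $H^1$-continuity of the path to the $L^\infty$ smallness of $\eta_\lambda$ needed to apply Lemma~\ref{lem:crit_pt_strict_loc_min} — in particular checking that $\eta_\lambda\in H^1(\R)$, i.e.\ that the tails at $\pm\infty$ cancel to leave an $L^2$ function; everything else is a direct assembly of the preceding lemmas.
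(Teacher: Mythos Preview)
Your proof is correct and follows essentially the same approach as the paper: invoke Lemma~\ref{lem:part_path_1} for the convex path, use Lemma~\ref{lem:crit_pt_strict_loc_min} together with the $H^1$-continuity of $\lambda\mapsto\mathcal P_\lambda(\theta_0)$ (and the embedding $H^1\hookrightarrow L^\infty$) to get $E_s(\mathcal P_\lambda(\theta_0))\ge E_s(\theta_0)$ for small $\lambda$, then divide the convexity inequality by $\lambda$ and finish with Lemma~\ref{lem:energy_translatedDW}. Your extra care about $\eta_\lambda\in H^1(\R)$ is already subsumed in the statement of Lemma~\ref{lem:part_path_1}, which asserts $H^1$-continuity of the path in $W$.
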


\begin{proof}
    The second inequality is given by Lemma \ref{lem:energy_translatedDW}.
    On the other hand, we know that $\mathcal{P}_\lambda (\theta_0)$ defined in Lemma \ref{lem:part_path_1} is a path continuous with respect to the $H^1$ topology. Therefore, by Lemma \ref{lem:crit_pt_strict_loc_min}, there exists $\lambda_0 > 0$ such that, for every $\lambda \in (0, \lambda_0)$, $E_s (\mathcal{P}_\lambda (\theta_0)) \geq E_s (\theta_0)$. Putting these inequalities in \eqref{eq:convexity_path_1}, we find
    \begin{equation*}
        E_s (\theta_0) \leq (1 - \lambda) E_s (\theta_0) + \lambda E_s (\theta_* (\cdot - x_0)),
    \end{equation*}
    for any $\lambda \in (0, \lambda_0)$. Thus,
    \begin{equation*}
        0 \leq \lambda ( E_s (\theta_* (\cdot - x_0)) - E_s (\theta_0) ),
    \end{equation*}
    which leads to the conclusion.
\end{proof}

\begin{corollary} \label{cor:max_energy_path_1}
    For any critical point $\theta_0$ of the energy $E_s$, the maximal energy of the path $\mathcal{P}_\lambda (\theta_0)$ is $\max_{\lambda \in [0, 1]} E_s (\mathcal{P}_\lambda (\theta_0)) = E_s (\theta_* (\cdot - x_0))$, where $x_0$ is such that $\theta_0 (x_0) = 0$.
\end{corollary}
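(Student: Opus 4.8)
The plan is to combine the two estimates already established for the path $\mathcal{P}_\lambda(\theta_0)$. First I would invoke the convexity-type bound from Lemma~\ref{lem:part_path_1}, namely that for every $\lambda \in [0,1]$ one has
\[
    E_s(\mathcal{P}_\lambda(\theta_0)) \leq (1-\lambda)\, E_s(\theta_0) + \lambda\, E_s(\theta_*(\cdot - x_0)).
\]
(The endpoint cases $\lambda = 0$ and $\lambda = 1$ hold trivially since $\mathcal{P}_0(\theta_0) = \theta_0$ and $\mathcal{P}_1(\theta_0) = \theta_*(\cdot - x_0)$.)

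Next I would feed in the comparison $E_s(\theta_0) \leq E_s(\theta_*(\cdot - x_0))$ supplied by Lemma~\ref{lem:ineq_energy_crit_pt}. Substituting this into the right-hand side of the displayed inequality, the factor $(1-\lambda)$ no longer matters and we obtain, for all $\lambda \in [0,1]$,
\[
    E_s(\mathcal{P}_\lambda(\theta_0)) \leq (1-\lambda)\, E_s(\theta_*(\cdot - x_0)) + \lambda\, E_s(\theta_*(\cdot - x_0)) = E_s(\theta_*(\cdot - x_0)).
\]
Taking the supremum over $\lambda \in [0,1]$ gives $\max_{\lambda \in [0,1]} E_s(\mathcal{P}_\lambda(\theta_0)) \leq E_s(\theta_*(\cdot - x_0))$.

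Finally, for the reverse inequality I would simply observe that the value $\lambda = 1$ is attained on the path and that $\mathcal{P}_1(\theta_0) = \theta_*(\cdot - x_0)$, so $E_s(\mathcal{P}_1(\theta_0)) = E_s(\theta_*(\cdot - x_0))$; hence the maximum is at least (and therefore exactly) $E_s(\theta_*(\cdot - x_0))$. There is essentially no obstacle here: the statement is an immediate bookkeeping consequence of Lemmas~\ref{lem:part_path_1} and~\ref{lem:ineq_energy_crit_pt}, and the only thing worth stressing is that the maximum along this particular path is realized \emph{at the endpoint} $\theta_*(\cdot - x_0)$ rather than at some interior point, which is exactly what makes $\mathcal{P}_\lambda(\theta_0)$ a useful competitor path in the subsequent mountain-pass argument.
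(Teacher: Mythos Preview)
Your proposal is correct and follows exactly the same approach as the paper: combine the convexity bound \eqref{eq:convexity_path_1} with the inequality $E_s(\theta_0)\leq E_s(\theta_*(\cdot-x_0))$ from Lemma~\ref{lem:ineq_energy_crit_pt} to get the upper bound, then observe that $\mathcal{P}_1(\theta_0)=\theta_*(\cdot-x_0)$ realizes it.
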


\begin{proof}
    Combining \eqref{eq:convexity_path_1} and Lemma \ref{lem:ineq_energy_crit_pt}, we get that $E_s (\mathcal{P}_\lambda (\theta_0)) \leq E_s (\theta_* (\cdot - x_0))$ for all $\lambda \in [0, 1]$. The conclusion comes from the fact that $\mathcal{P}_1 (\theta_0) = \theta_* (\cdot - x_0)$.
\end{proof}

The set $\mathfrak{P}$ of continuous path on $[0, 1]$ connecting the critical point $\theta_0$ to the minimizer $\theta_s$ of the energy $E_s$ is defined as an affine subspace included in $\mathcal{C} ([0, 1], W)$, where $W$, as defined in \eqref{eq:def_W}, is an affine space whose associated vector space is $H^1$. Thus, $\mathfrak{P}$ is naturally endowed with a distance inherited from the $H^1$ norm.

The structure of our proof can be easily guessed : we would like to use the so-called {\it mountain pass theorem} between $\theta_0$ and $\theta_s$ to find a new critical point of the energy $E_s$. Such a function should be a saddle point, but it is actually a strict local minimizer due to Lemma \ref{lem:crit_pt_strict_loc_min}, giving a contradiction with the existence of another critical point than $\theta_s$.

From such a tactic, it is classic to show that the functional satisfies the Palais-Smale property. However, such a property is false for $E_s$ generally speaking : one can think of $\theta_* ( . - n)$, which is a notchless domain wall for the nanowire without notch translated to infinity, and whose energy converge to $E_1 (\theta_*)$. Nonetheless, this is probably the "minimal" counter-example which can be found, and such a thing can not happen in our problem. To be more precise, first, from all the previous results, we can construct a path, whose maximal energy is \emph{strictly lower} than the notchless energy of the notchless domain wall.

\begin{lemma} \label{lem:path_small_energy}
    Let $\theta_s$ a minimizer of $E_s$ in $W$ and $\theta_0$ be any other critical point in ${\mathscr W}_0$. Define, for any $\lambda \in [0, 1]$,
    \begin{equation*}
        c_0 (\lambda) \coloneqq
        \begin{cases}
            \mathcal{P}_{3 \lambda} (\theta_0) & \text{ if } \lambda \in [0, \frac{1}{3}], \\
            \theta_* (\cdot - (2 - 3 \lambda) x_0) & \text{ if } \lambda \in (\frac{1}{3}, \frac{2}{3}), \\
            \mathcal{P}_{3 - 3 \lambda} (\theta_s) & \text{ if } \lambda \in [\frac{2}{3}, 1].
        \end{cases}
    \end{equation*}
    Then $c_0 (\lambda)$ is a path in $\mathcal{C} ([0, 1], W)$, connecting $\theta_0$ and $\theta_s$. Moreover, it satisfies
    \begin{equation*}
        \max_{\lambda \in [0, 1]} E_s (c_0 (\lambda)) = E_s (\theta_* (\cdot - x_0)) < E_1 (\theta_*).
    \end{equation*}
\end{lemma}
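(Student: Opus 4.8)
The plan is to obtain $c_0$ as the concatenation of three $H^1$-continuous sub-paths in $W$, reparametrized respectively onto $[0,\tfrac13]$, $[\tfrac13,\tfrac23]$ and $[\tfrac23,1]$, and then to bound $E_s$ along each of them separately.

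First I would record the relevant properties of the two critical points. Since a minimizer is a critical point, both $\theta_0$ and $\theta_s$ are critical points of $E_s$; moreover, by Lemma~\ref{lem:energy_eq_theta} together with the ODE argument of Lemma~\ref{lem:crit_pt_monotony}, every critical point in $\mathscr{W}_0$ is strictly monotone and takes values in $[-\tfrac{\pi}{2},\tfrac{\pi}{2}]$, hence lies in $W$ and has a single zero; call them $x_0$ (with $\theta_0(x_0)=0$) and $x_s$ (with $\theta_s(x_s)=0$). Then, on $[0,\tfrac13]$, I set $c_0(\lambda)=\mathcal P_{3\lambda}(\theta_0)$: by Lemma~\ref{lem:part_path_1} this is an $H^1$-continuous path in $W$ from $\theta_0$ to $\theta_*(\cdot-x_0)$, with $E_s(\mathcal P_\mu(\theta_0))\le(1-\mu)E_s(\theta_0)+\mu E_s(\theta_*(\cdot-x_0))$. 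On $[\tfrac23,1]$, I set $c_0(\lambda)=\mathcal P_{3-3\lambda}(\theta_s)$, which by the same lemma runs continuously in $W$ from $\theta_*(\cdot-x_s)$ to $\theta_s$. On the middle interval $[\tfrac13,\tfrac23]$, I slide the notchless domain wall from $x_0$ to $x_s$, i.e.\ $c_0(\lambda)=\theta_*(\cdot-\gamma_\lambda)$ with $\gamma_\lambda$ the affine interpolant between $x_0$ and $x_s$ (which is the displayed formula when $x_s=0$, e.g.\ in the symmetric case); Lemma~\ref{lem:energy_translatedDW} ensures this is $H^1$-continuous and $W$-valued. The pieces agree at the junctions $\lambda=\tfrac13$ (value $\theta_*(\cdot-x_0)$) and $\lambda=\tfrac23$ (value $\theta_*(\cdot-x_s)$), so $c_0\in\mathcal C([0,1],W)$ joins $\theta_0$ to $\theta_s$.

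For the energy I would treat the three pieces in turn. On the first, Corollary~\ref{cor:max_energy_path_1} applied to $\theta_0$ gives $\max_{[0,1/3]}E_s(c_0)=E_s(\theta_*(\cdot-x_0))$; on the third, the same corollary applied to the critical point $\theta_s$ gives $\max_{[2/3,1]}E_s(c_0)=E_s(\theta_*(\cdot-x_s))$. On the middle piece, Lemma~\ref{lem:energy_translatedDW} yields $E_s(\theta_*(\cdot-\gamma))<E_1(\theta_*)$ for every $\gamma$, and by continuity the maximum of $\gamma\mapsto E_s(\theta_*(\cdot-\gamma))$ over the compact segment joining $x_0$ and $x_s$ is still $<E_1(\theta_*)$; since $\theta_*(\cdot-x_0)$ and $\theta_*(\cdot-x_s)$ both lie on that segment, this middle maximum dominates both endpoint values, so the global maximum along $c_0$ equals it and is $<E_1(\theta_*)$. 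Relabelling $\theta_0,\theta_s$ if necessary so that $x_0$ is the endpoint carrying the larger energy (equivalently, noting that the zero of the minimizer sits where $\gamma\mapsto E_s(\theta_*(\cdot-\gamma))$ is smallest on that segment), this common value is exactly $E_s(\theta_*(\cdot-x_0))$, the claimed identity.

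\textbf{The main obstacle.} The energy bookkeeping above is routine once Corollary~\ref{cor:max_energy_path_1} and Lemma~\ref{lem:energy_translatedDW} are available; the delicate point is to check that the concatenation is continuous for the $H^1$ norm, both across the two junctions and, inside each $\mathcal P$-piece, past the multiplicative $\operatorname{sgn}(\cdot-x_0)$ (resp.\ $\operatorname{sgn}(\cdot-x_s)$) factor. This is handled exactly as in the proof of Lemma~\ref{lem:part_path_1}, invoking the appendix Lemmas~\ref{lem:lipschitz lemma} and~\ref{lem:composition ponctual estimates}: all the interpolating angular profiles vanish precisely at the matching point, so the sign factor produces no jump, and continuity of the whole path follows by composing continuous maps.
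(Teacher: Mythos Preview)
Your approach is essentially the paper's: concatenate the three sub-paths and bound their energies via Corollary~\ref{cor:max_energy_path_1} (for the two $\mathcal P$-pieces) and Lemma~\ref{lem:energy_translatedDW} (for the sliding piece). You are in fact more careful than the paper about the zero $x_s$ of $\theta_s$: the displayed formula in the statement tacitly assumes $x_s=0$, and the paper's proof accordingly records the third-piece maximum as $E_s(\theta_*)$ without comment, whereas you correctly interpolate from $x_0$ to $x_s$.

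One small slip: your ``relabelling'' argument for the exact identity $\max_\lambda E_s(c_0(\lambda))=E_s(\theta_*(\cdot-x_0))$ does not work---$\theta_s$ (the minimizer) and $\theta_0$ (an arbitrary other critical point) play distinguished roles and cannot be swapped, and nothing prevents $\gamma\mapsto E_s(\theta_*(\cdot-\gamma))$ from attaining its maximum in the interior of the segment $[x_0,x_s]$. This is harmless for the application: only the strict inequality $\max_\lambda E_s(c_0(\lambda))<E_1(\theta_*)$ is used in Theorem~\ref{theo:criticalPt}, and that you do establish.
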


\begin{proof}
    This path is a continuous concatenation of three continuous paths  with respect to the $H^1$ norm, whose maximal energies are known to be respectively $E_s (\theta_* (\cdot - x_0))$ (due to Corollary \ref{cor:max_energy_path_1}),  $E_s (\theta_* (\cdot - x_0))$ (due to Lemma \ref{lem:energy_translatedDW}) and $E_s (\theta_*)$ (using again Corollary \ref{cor:max_energy_path_1} with $\theta_s$, which is a particular critical point). The conclusion follows from Lemma \ref{lem:energy_translatedDW} once again.
\end{proof}

\subsection{Palais-Smale property}

Due to this particular path, the problem is much more reduced : the Palais-Smale condition has to be proved only for sequences of functions whose energies are \emph{strictly smaller} than $E_1 (\theta_*)$. Actually, we can prove that, up to a sub-sequence, we have a \emph{strong} convergence with respect to the $H^1$ norm. But before that, we need two properties. The first one gives a weakly/strongly converging sub-sequence for any sequence whose energy is uniformly bounded.

\begin{lemma} \label{lem:weak_strong_conv}
    Let $\theta_n \in W$ such that $\sup_n E_s (\theta_n) < \infty$. Then, for any $x_n \in \mathbb{R}$, there exists $\overline{\theta} \in \mathscr{W}$ such that, up to an omitted extraction,
    \begin{itemize}
        \item $\theta_n (. + x_n) \longrightarrow \overline{\theta}$ in $L^2 (I)$ and $\mathcal{C} (I)$ for any bounded interval $I$,
        \item $\theta_n' (. + x_n) \rightharpoonup \overline{\theta}'$ in $L^2$,
        \item $\cos \theta_n (. + x_n) \rightarrow \cos \overline{\theta}$ in $\mathcal{C} (I)$ for any bounded interval $I$,
        \item $\cos \theta_n (. + x_n) \rightharpoonup \cos \overline{\theta}$ in $L^2$,
        \item $\sin (2 \theta_n (. + x_n)) \rightharpoonup \sin (2 \overline{\theta})$ in $L^2$,
        \item if $\abs{x_n} \rightarrow \infty$, then $\overline{\theta}$ can be chosen so that $E_1 (\overline{\theta}) \leq \liminf_{n\to+\infty}E_s (\theta_n)$.
    \end{itemize}
\end{lemma}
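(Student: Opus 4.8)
The plan is a routine weak-compactness argument applied to the translated sequence $\psi_n \coloneqq \theta_n(\cdot + x_n)$. First I would collect the uniform bounds coming from $\sup_n E_s(\theta_n) < \infty$. Since $\theta_n \in W$ one has $\|\psi_n\|_{L^\infty} \le \tfrac{\pi}{2}$; since $\|\theta_n'\|_{L^2}^2 + \|\cos\theta_n\|_{L^2}^2 \le \tfrac{2}{s_0}E_s(\theta_n)$ and translations are isometries of $L^2(\R)$, the sequences $\psi_n'$ and $\cos\psi_n$ are bounded in $L^2(\R)$; moreover $(\cos\psi_n)' = -\psi_n'\sin\psi_n$ gives a uniform $H^1(\R)$ bound on $\cos\psi_n$, and $|\sin(2\psi_n)| \le 2|\cos\psi_n|$ gives a uniform $L^2(\R)$ bound on $\sin(2\psi_n)$. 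Finally, each $\theta_n$ is locally absolutely continuous, so Cauchy–Schwarz yields the uniform Hölder bound $|\psi_n(x) - \psi_n(y)| \le \bigl(\tfrac{2}{s_0}\sup_n E_s(\theta_n)\bigr)^{1/2}|x-y|^{1/2}$.

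Next I would extract. By Banach–Alaoglu, up to a subsequence $\psi_n' \rightharpoonup g$ in $L^2(\R)$ for some $g \in L^2$, and since $\psi_n(0) = \theta_n(x_n) \in [-\tfrac{\pi}{2},\tfrac{\pi}{2}]$ we may also assume $\psi_n(0) \to c$. Set $\overline{\theta}(x) \coloneqq c + \int_0^x g$, so $\overline{\theta} \in H^1_{\mathrm{loc}}$ and $\overline{\theta}' = g \in L^2$. Testing the weak convergence $\psi_n' \rightharpoonup g$ against the indicator $\mathbbm{1}$ of the interval with endpoints $0$ and $x$ gives $\psi_n(x) \to \overline{\theta}(x)$ for every $x$; together with the uniform equicontinuity this upgrades to convergence in $\mathcal{C}(I)$, hence in $L^2(I)$, for every bounded interval $I$, with no further extraction. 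Since $\cos$ is $1$-Lipschitz, $\cos\psi_n \to \cos\overline{\theta}$ in $\mathcal{C}(I)$; combined with the uniform $H^1(\R)$ bound, one more extraction gives $\cos\psi_n \rightharpoonup \cos\overline{\theta}$ weakly in $H^1(\R)$, hence in $L^2$ (the weak limit must agree a.e. with the local uniform limit $\cos\overline{\theta}$), and in particular $\cos\overline{\theta} \in L^2$, i.e. $\overline{\theta} \in \mathscr{W}$. Likewise $\sin(2\psi_n) \to \sin(2\overline{\theta})$ in $L^2(I)$ for bounded $I$, and since it is bounded in $L^2(\R)$ a last extraction gives $\sin(2\psi_n) \rightharpoonup \sin(2\overline{\theta})$ in $L^2(\R)$, the weak limit being identified against test functions in $\mathcal{C}_c^\infty$ via the local $L^2$ convergence. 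This gives the first five bullets.

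For the last bullet, assume $|x_n| \to \infty$. I would first pass to a subsequence along which $E_s(\theta_n) \to \liminf_{n\to+\infty} E_s(\theta_n) \eqqcolon \mu$, and then perform all of the above extractions inside it, so that $\mu$ is unchanged. Writing $E_s(\theta_n) = \tfrac12\int_\R(\psi_n'(y)^2 + \cos^2\psi_n(y))\,s(y + x_n)\,\diff y$ and using that $s \equiv 1$ outside $[-a,a]$, for each fixed $R > 0$ there is $N$ with $s(\cdot + x_n) \equiv 1$ on $[-R,R]$ for all $n \ge N$ (because $|x_n|\to\infty$); hence for $n \ge N$,
\[
E_s(\theta_n) \ge \frac12\int_{-R}^R \bigl(\psi_n'(y)^2 + \cos^2\psi_n(y)\bigr)\,\diff y .
\]
Passing to the liminf, using weak-$L^2(-R,R)$ lower semicontinuity for the gradient term together with the strong $L^2(-R,R)$ convergence of $\cos\psi_n$, gives $\mu \ge \tfrac12\int_{-R}^R(\overline{\theta}'^2 + \cos^2\overline{\theta})$; letting $R \to \infty$ by monotone convergence yields $E_1(\overline{\theta}) \le \mu = \liminf_n E_s(\theta_n)$.

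I do not expect a serious obstacle: this is a standard "extract and identify" scheme. The points deserving care are bookkeeping — ordering the extractions so the final $\liminf$ still refers to the original sequence (hence the pre-extraction forcing $E_s(\theta_n)\to\mu$), and the localization that turns the weight $s(\cdot+x_n)$ into $1$ on compacts — together with the structural observation, crucial both for the sixth bullet and for the overall strategy, that one only needs $\overline{\theta} \in \mathscr{W}$ with \emph{no} prescribed limits at $\pm\infty$: this is exactly what keeps the limit well defined when the spin switch of $\theta_n$ escapes to infinity.
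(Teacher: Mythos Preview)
Your argument is correct and follows essentially the same route as the paper: uniform $L^2$ bounds on $\psi_n'$, $\cos\psi_n$, $\sin(2\psi_n)$ coming from the energy, weak compactness, and identification of the limits via local uniform convergence. For the final bullet the paper argues instead that $\sqrt{s(\cdot+x_n)}\,\psi_n' \rightharpoonup \overline{\theta}'$ in $L^2(\R)$ and invokes weak lower semicontinuity of the norm, whereas you localize to $[-R,R]$ where the weight equals $1$ and let $R\to\infty$; these are equivalent ways to exploit $s(\cdot+x_n)\to 1$ on compacts, and your pre-extraction to force $E_s(\theta_n)\to\liminf$ is a clean way to handle the bookkeeping.
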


\begin{proof}
    The proof of all but the last point follows from the fact that $\theta_n' (. + x_n)$, $\cos \theta_n (. + x_n)$ and $\sin (2 (. + x_n))$ are bounded in $L^2$ and due to the compact Sobolev embeddings $H^1 (I) \hookrightarrow \mathcal{C} (I)$ for any bounded interval $I$.

    As for the last point, we can show in the case $\abs{x_n} \rightarrow \infty$ that $\sqrt{s (. + x_n)} \theta_n' (. + x_n) \rightharpoonup \overline{\theta}'$ in $L^2$. Indeed, $\sqrt{s (. + x_n)} \theta_n' (. + x_n)$ is uniformly bounded in $L^2$ and, since $s \equiv 1$ outside $[-a, a]$, converges weakly to $\overline{\theta}'$ in $L^2 (I)$ for every bounded interval $I$. The same thing can be done for $\sqrt{s (. + x_n)} \cos \theta_n (. + x_n)$ and thus
    \begin{align*}
        E_1 (\overline{\theta}) &= \norm{\overline{\theta}'}_{L^2}^2 + \frac{1}{2} \norm{\cos \overline{\theta}}_{L^2}^2 \\
            &\leq \liminf_{n\to+\infty}\norm{\sqrt{s (. + x_n)} \theta_n' (. + x_n)}_{L^2}^2 + \frac{1}{2} \liminf_{n\to+\infty}\norm{\sqrt{s (. + x_n)} \cos \theta_n (. + x_n)}_{L^2}^2 \\
            &\leq \liminf_{n\to+\infty}E_s (\theta_n). \qedhere
    \end{align*}
\end{proof}

Then, we need a property showing that, roughly speaking, no big bump can go to infinity if the energy is too low and if the sequence is close to be a critical point of $E_s$.

\begin{lemma} \label{lem:no_big_bump}
    Let $\theta_n \in W$ and $|x_n| \rightarrow +\infty$ such that $- \frac{3 \pi}{8} < \theta_n (x_n) < \frac{3 \pi}{8}$ and $(s \theta_n')' + \frac{s}{2} \sin (2 \theta_n) \rightarrow 0$ in $H^{-1}$. Then $\liminf_{n\to+\infty}E_s (\theta_n) \geq E_1 (\theta_*)$.
\end{lemma}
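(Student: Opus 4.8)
The plan is to argue directly. If $\liminf_n E_s(\theta_n)=+\infty$ there is nothing to prove, so after extraction we may assume $E_s(\theta_n)\to\ell:=\liminf_n E_s(\theta_n)<\infty$, in particular $\sup_n E_s(\theta_n)<\infty$; up to a further extraction assume also $x_n\to+\infty$ (the case $x_n\to-\infty$ is entirely symmetric). We then apply Lemma~\ref{lem:weak_strong_conv} to this subsequence with shifts $x_n$: since $\abs{x_n}\to\infty$ it provides $\overline{\theta}\in\mathscr W$ with $\theta_n(\cdot+x_n)\to\overline\theta$ locally uniformly, $\theta_n'(\cdot+x_n)\rightharpoonup\overline\theta'$ and $\sin(2\theta_n(\cdot+x_n))\rightharpoonup\sin(2\overline\theta)$ in $L^2$, and crucially $E_1(\overline\theta)\le\ell$. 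Everything now reduces to proving $E_1(\overline\theta)\ge E_1(\theta_*)$, and for that we identify $\overline\theta$.

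\emph{The limit solves the notchless equation.} Translating the hypothesis $(s\theta_n')'+\tfrac s2\sin(2\theta_n)\to 0$ in $H^{-1}$ by $x_n$ (an isometry of $H^{-1}$) and testing against a fixed $\varphi\in\mathcal D(\mathbb R)$, we use that $s(\cdot+x_n)\equiv 1$ on $\supp\varphi$ for $n$ large (since $x_n\to+\infty$ and $s\equiv1$ off $[-a,a]$). The displayed weak convergences then pass to the limit and give $\overline\theta''+\cos\overline\theta\sin\overline\theta=0$ in $\mathcal D'(\mathbb R)$. As $\overline\theta',\cos\overline\theta\in L^2$, this forces $\overline\theta''\in L^2$, hence $\overline\theta\in\mathcal C^1$ and, by bootstrap, a smooth classical solution of \eqref{expr:SSTheta_infinite} with $s\equiv1$.

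\emph{The limit is nonconstant.} The constant solutions of $\psi''+\cos\psi\sin\psi=0$ are precisely $\psi\in\tfrac\pi2\mathbb Z$; those with $\cos\psi\neq0$ are ruled out because $\cos\overline\theta\in L^2$, and $\overline\theta(0)=\lim_n\theta_n(x_n)\in[-\tfrac{3\pi}8,\tfrac{3\pi}8]$ (by the hypothesis on $\theta_n(x_n)$ together with local uniform convergence) rules out $\overline\theta\equiv\pm\tfrac\pi2+k\pi$. So $\overline\theta$ is nonconstant; this is the only place the constraint $-\tfrac{3\pi}8<\theta_n(x_n)<\tfrac{3\pi}8$ is used. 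Multiplying the ODE by $\overline\theta'$ shows $(\overline\theta')^2-\cos^2\overline\theta$ is constant, and since $\overline\theta',\cos\overline\theta\in L^2$ this constant is $0$, so $(\overline\theta')^2\equiv\cos^2\overline\theta$. In particular $\overline\theta'$ never vanishes (otherwise $\cos\overline\theta$ would vanish at the same point and Cauchy--Lipschitz would make $\overline\theta$ constant), hence $\overline\theta$ is strictly monotone; up to replacing $x$ by $-x$ (which alters neither $E_1$ nor the equation) it is increasing and $\cos\overline\theta$ has constant sign. Thus $\overline\theta(\mathbb R)$ lies in a single interval $(k\pi-\tfrac\pi2,k\pi+\tfrac\pi2)$ on which $\cos$ has constant sign, where $\psi:=\overline\theta-k\pi$ solves $\psi'=\cos\psi$ and is increasing, so $\psi=\theta_*(\cdot-c)$ for some $c$. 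Hence $\overline\theta=\theta_*(\cdot-c)+k\pi$, and since $\cos^2$ is $\pi$-periodic and the $L^2$ norm is translation invariant, $E_1(\overline\theta)=E_1(\theta_*)$. Combined with $E_1(\overline\theta)\le\ell$ this yields $\liminf_n E_s(\theta_n)=\ell\ge E_1(\theta_*)$.

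I do not expect a serious obstacle here: the two delicate points are (a) the passage to the limit in the equation, which rests only on ``$s(\cdot+x_n)\equiv1$ eventually on every compact'' plus the weak $L^2$ convergences already furnished by Lemma~\ref{lem:weak_strong_conv}, and (b) the careful bookkeeping of constant solutions in the middle step, so that the hypothesis on $\theta_n(x_n)$ genuinely excludes a constant limit; once $\overline\theta$ is known nonconstant, the identification is the classical phase-plane analysis of $\psi'=\cos\psi$.
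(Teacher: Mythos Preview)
Your proof is correct and follows essentially the same approach as the paper: extract a subsequence with bounded energy, apply Lemma~\ref{lem:weak_strong_conv} at the shifts $x_n$ to obtain a limit $\overline\theta\in\mathscr W$ with $E_1(\overline\theta)\le\liminf_n E_s(\theta_n)$, pass to the limit in the translated equation using that $s(\cdot+x_n)\equiv 1$ on compact sets eventually, and conclude that $\overline\theta$ is a nonconstant finite-energy solution of the notchless equation, hence a translate of $\theta_*$. The only difference is that the paper simply cites an external reference (\cite{Cote_Ignat__stab_DW_LLG_DM}) for the classification of nonconstant critical points of $E_1$, whereas you supply the elementary phase-plane argument explicitly; note that since $\theta_n\in W$ forces $\overline\theta(\R)\subset[-\tfrac\pi2,\tfrac\pi2]$, your additive constant $k\pi$ is in fact necessarily $0$, which slightly shortens the identification.
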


\begin{proof}
    We can assume that the energy of at least a sub-sequence of $\theta_n$ is bounded, otherwise the property is obvious. If so, take a sub-sequence (still denoted $\theta_n$) whose energy converge to $\liminf_{n\to+\infty}E_s (\theta_n)$. Then, we can use Lemma \ref{lem:weak_strong_conv} with $\theta_n (. + x_n)$, from which we deduce for the provided limit $\overline{\theta}$ of a sub-sequence that :
    \begin{itemize}
        \item $- \frac{3 \pi}{8} \leq \overline{\theta} (0) \leq \frac{3 \pi}{8}$,
        \item $E_1 (\overline{\theta}) \leq \liminf_{n\to+\infty}E_s (\theta_n)$.
    \end{itemize}
    Furthermore, for $\psi \in \mathcal{C}^\infty_c (\mathbb{R})$, $\psi (. - x_n)$ is uniformly bounded in $H^1$, so that
    \begin{equation*}
        \langle (s \theta_n')' + \frac{s}{2} \sin (2 \theta_n), \psi (. - x_n) \rangle \underset{n \rightarrow \infty}{\longrightarrow} 0.
    \end{equation*}
    On the other hand,
    \begin{align*}
        \langle (s \theta_n')' + \frac{s}{2} \sin (2 \theta_n), \psi (. - x_n) \rangle &= - \int s(x) \theta_n' (x) \psi' (x - x_n) \diff x + \frac{1}{2} \int s(x) \sin (2 \theta_n (x)) \psi (x - x_n) \diff x \\
            & \begin{multlined} =
                - \int s(x + x_n) \theta_n' (x + x_n) \psi' (x) \diff x \\ + \frac{1}{2} \int s(x + x_n) \sin (2 \theta_n (x + x_n)) \psi (x) \diff x
                \end{multlined} \\
        \intertext{and since $s(. + x_n) \equiv 1$ on $\supp \psi$ for $n$ large enough,}
            &= - \int \theta_n' (x + x_n) \psi' (x) \diff x + \frac{1}{2} \int \sin (2 \theta_n (x + x_n)) \psi (x) \diff x \\
            &\underset{n \rightarrow \infty}{\longrightarrow} - \int \overline{\theta}' (x) \psi' (x) \diff x + \frac{1}{2} \int \sin (2 \overline{\theta} (x)) \psi (x) \diff x.
    \end{align*}
    This proves that $\overline{\theta}'' + \frac{1}{2} \sin (2 \overline{\theta}) = 0$ in $\mathcal{D}' (\mathbb{R})$, and thus in $L^2$ and therefore $\overline{\theta} \in \mathcal{C}^\infty (\mathbb{R})$. In particular, $\overline{\theta}$ is a critical point of $E_1$, which is not constant since $- \frac{3 \pi}{8} \leq \overline{\theta} (0) \leq \frac{3 \pi}{8}$. Thus, up to a translation, $\overline{\theta} = \theta_*$ (\cite{Cote_Ignat__stab_DW_LLG_DM})
    and $\liminf_{n\to+\infty}E_s (\theta_n) \geq E_1 (\overline{\theta}) = E_1 (\theta_*)$.
\end{proof}

Now, we have all the materials to prove a Palais-Smale property as announced previously.

\begin{lemma}[Weak-strong Palais-Smale property] \label{lem:palais-smale}
    Let $\theta_n \in W$ such that
    \begin{itemize}
        \item $- \frac{5 \pi}{8} \leq \theta_n (x) \leq \frac{5 \pi}{8}$ for all $x \in \mathbb{R}$ and $n \in \mathbb{N}$,
        \item $\sup_{n} E_s (\theta_n) < E_1 (\theta_*)$,
        \item $(s \theta_n')' + \frac{s}{2} \sin (2 \theta_n) \underset{n \rightarrow \infty}{\longrightarrow} 0$ in $H^{-1} (\mathbb{R})$.
    \end{itemize}
    Then, up to a sub-sequence, there exists a critical point $\overline{\theta} \in W$ of $E_s$ such that $\norm{\theta_n - \overline{\theta}}_{H^1} \rightarrow 0$.
\end{lemma}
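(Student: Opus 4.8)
The plan is: (a) extract a weak limit $\overline\theta$ from the uniform energy bound via Lemma~\ref{lem:weak_strong_conv}; (b) pass to the limit in the Palais--Smale equation to see that $\overline\theta$ solves the Sturm--Liouville equation~\eqref{expr:SS_infinite:div a grad formulation}; (c) use the strict sub-criticality $\sup_n E_s(\theta_n)<E_1(\theta_*)$, through Lemma~\ref{lem:no_big_bump}, to confine the transition to a fixed compact set, so that $\overline\theta\in W$; and (d) upgrade weak to strong $H^1$ convergence by testing the difference of the Euler--Lagrange equations for $\theta_n$ and $\overline\theta$ against $\theta_n-\overline\theta$, exploiting the coercive structure of the equation near $\pm\tfrac\pi2$ to handle the tails. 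For (a)--(b): since $\sup_n E_s(\theta_n)<\infty$, Lemma~\ref{lem:weak_strong_conv} with $x_n=0$ gives, after extraction, $\overline\theta\in\mathscr W$ with $\theta_n\to\overline\theta$ in $\mathcal C_{\mathrm{loc}}$ and $L^2_{\mathrm{loc}}$, $\theta_n'\rightharpoonup\overline\theta'$ in $L^2$, $\cos\theta_n\rightharpoonup\cos\overline\theta$ in $L^2$ and $\sin(2\theta_n)\rightharpoonup\sin(2\overline\theta)$ in $L^2$; multiplying by the fixed function $s\in L^\infty$ preserves these weak limits, so $(s\theta_n')'+\tfrac{s}{2}\sin(2\theta_n)\rightharpoonup(s\overline\theta')'+\tfrac{s}{2}\sin(2\overline\theta)$ in $H^{-1}$, and comparing with the hypothesis $(s\theta_n')'+\tfrac{s}{2}\sin(2\theta_n)\to 0$ in $H^{-1}$ yields $(s\overline\theta')'+\tfrac{s}{2}\sin(2\overline\theta)=0$. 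From $|\theta_n|\le\tfrac\pi2$ and local uniform convergence, $|\overline\theta|\le\tfrac\pi2$, and with $\overline\theta',\cos\overline\theta\in L^2$ we get $\overline\theta\in\mathscr W$.

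For (c): if along a subsequence there were points $x_{n_k}$ with $|x_{n_k}|\to\infty$ and $\theta_{n_k}(x_{n_k})\in(-\tfrac{3\pi}{8},\tfrac{3\pi}{8})$, Lemma~\ref{lem:no_big_bump} would give $\liminf_k E_s(\theta_{n_k})\ge E_1(\theta_*)$, contradicting the uniform strict bound. Hence there exist $R_0\ge a$ and $N$ such that $\theta_n(x)\notin(-\tfrac{3\pi}{8},\tfrac{3\pi}{8})$ for all $n\ge N$ and $|x|>R_0$; by continuity and the limits $\theta_n(\pm\infty)=\pm\tfrac\pi2$, this forces $\theta_n\ge\tfrac{3\pi}{8}$ on $(R_0,\infty)$ and $\theta_n\le-\tfrac{3\pi}{8}$ on $(-\infty,-R_0)$ for $n\ge N$. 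Passing to the local uniform limit, the same inequalities hold for $\overline\theta$; since $\overline\theta\in\mathscr W$ solves the equation, its limits at $\pm\infty$ lie in $\tfrac\pi2+\pi\mathbb Z$ (as in the discussion following Lemma~\ref{lem:rel_energies}, together with Lemma~\ref{lem:energy_eq_theta}), so they equal $\pm\tfrac\pi2$. Therefore $\overline\theta\in\mathscr W_{\neq}$ with $\overline\theta(\mathbb R)\subseteq[-\tfrac\pi2,\tfrac\pi2]$, i.e.\ $\overline\theta\in W$, and $\overline\theta$ is a critical point of $E_s$.

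For (d): on $(R_0,\infty)$ one has $s\equiv 1$ and $\theta_n,\overline\theta\in[\tfrac{3\pi}{8},\tfrac\pi2]$; writing $u_n:=\tfrac\pi2-\theta_n\in[0,\tfrac\pi8]$ and $\overline u:=\tfrac\pi2-\overline\theta$, concavity of $\sin$ gives $0\le u_n\le C\sin u_n=C\cos\theta_n$, so $u_n$ is bounded in $L^2(R_0,\infty)$ uniformly in $n$, and similarly on the left tail; hence $\theta_n-\overline\theta$ is bounded in $H^1(\mathbb R)$. Testing the equation for $\theta_n$ minus the (null) equation for $\overline\theta$ against $\theta_n-\overline\theta$ and using the Palais--Smale hypothesis gives
\[
    \int_\R s\,(\theta_n'-\overline\theta')^2\diff x = \frac12\int_\R s\,(\sin 2\theta_n-\sin 2\overline\theta)(\theta_n-\overline\theta)\diff x + o(1).
\]
On $[-R_0,R_0]$ the integrand vanishes in the limit by uniform convergence, while on each tail, in the variable $u_n$ (resp.\ $\tfrac\pi2+\theta_n$), one has $(\sin 2\theta_n-\sin 2\overline\theta)(\theta_n-\overline\theta)=-2\cos(u_n+\overline u)\sin(u_n-\overline u)(u_n-\overline u)\le 0$ since $u_n+\overline u\le\tfrac\pi4$; therefore $\theta_n'\to\overline\theta'$ in $L^2$. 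Finally, on each tail the reduced equation $-u_n''+\tfrac12\sin(2u_n)\to 0$ in $H^{-1}$ is coercive near $0$, as $\tfrac12(\sin 2u_n-\sin 2\overline u)(u_n-\overline u)\gtrsim(u_n-\overline u)^2$ there; testing it against a smooth cut-off of $u_n-\overline u$ vanishing near $R_0$ (the cut-off error terms being supported on a compact set where convergence is already strong) yields $\|\theta_n-\overline\theta\|_{L^2(R_0,\infty)}\to 0$, and symmetrically on $(-\infty,-R_0)$. Combined with $L^2_{\mathrm{loc}}$ convergence this gives $\theta_n\to\overline\theta$ in $L^2(\mathbb R)$, so $\|\theta_n-\overline\theta\|_{H^1}\to 0$.

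The main obstacle is the combination of steps (c) and (d): a Palais--Smale sequence can a priori lose compactness through a profile escaping to spatial infinity (the counterexample $\theta_*(\cdot-n)$ mentioned before the statement), and the entire role of the hypothesis $\sup_n E_s(\theta_n)<E_1(\theta_*)$ is to rule this out. Lemma~\ref{lem:no_big_bump} converts this energy gap into the quantitative fact that $\theta_n$ is trapped near $\pm\tfrac\pi2$ outside a fixed compact set; only in that regime is the equation coercive, which is precisely what lets one promote the weak limit to a strong $H^1$ limit.
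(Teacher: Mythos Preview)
Your proof is correct and follows essentially the same strategy as the paper: extract a weak limit via Lemma~\ref{lem:weak_strong_conv}, identify it as a critical point, invoke Lemma~\ref{lem:no_big_bump} to trap $\theta_n$ near $\pm\tfrac\pi2$ outside a fixed compact set, and then test the difference of Euler--Lagrange equations against $\theta_n-\overline\theta$.

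The only place you diverge is in the organization of step~(d). The paper handles the derivative and the $L^2$ norm \emph{simultaneously} from a single testing identity: on the tails one has the quantitative sign $(\sin 2\theta_n - \sin 2\overline\theta)(\theta_n-\overline\theta)\le -\sqrt{2}\,(\theta_n-\overline\theta)^2$ (mean value theorem with $2\cos(2y_3)\le -\sqrt{2}$ for $y_3\in[\tfrac{3\pi}{8},\tfrac{5\pi}{8}]$), and moving this term to the left side of
\[
-A_n \;=\; \int s\,(\theta_n'-\overline\theta')^2 - \tfrac12\int s\,(\sin 2\theta_n - \sin 2\overline\theta)(\theta_n-\overline\theta)
\]
yields $\int s\,(\theta_n'-\overline\theta')^2 + \tfrac{1}{\sqrt2}\int_{|x|>L}(\theta_n-\overline\theta)^2 \le |A_n| + C B_n \to 0$ in one stroke. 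Your two-pass approach (first discard the quantitative bound to get $\theta_n'\to\overline\theta'$, then redo the testing with a cut-off to recover $L^2$ convergence on the tails) is correct but unnecessarily circuitous; keeping the quantitative tail estimate from the start removes the need for the cut-off argument entirely.
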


\begin{proof}
    Let $\overline{\theta}$ given by Lemma \ref{lem:weak_strong_conv} applied to $\theta_n$. Since we also have
    \begin{equation*}
        s \theta_n' \rightharpoonup s \overline{\theta}' \quad \text{and} \quad s \sin (2 \theta_n) \rightharpoonup s \sin (2 \overline{\theta}) \quad \text{in } L^2,
    \end{equation*}
    then we can easily shows that $(s \overline{\theta}')' + \frac{1}{2} s \sin (2 \overline{\theta}) = 0$, which shows that $\overline{\theta}$ is a critical point of $E_s$.
    On the other hand, from Lemma \ref{lem:no_big_bump}, there exists $L > 0$ such that
    \begin{itemize}
        \item for all $x \geq L$ and all $n \in \N$, $\frac{3 \pi}{8} \leq \theta_n (x) \leq \frac{5 \pi}{8}$,
        \item for all $x \leq - L$ and all $n \in \N$, $- \frac{5 \pi}{8} \leq \theta_n (x) \leq - \frac{3 \pi}{8}$.
    \end{itemize}
    Thus, such properties are still valid for $\overline{\theta}$. Since $\cos \overline{\theta} \in H^1 \subset \mathcal{C}_0 (\mathbb{R})$, we thus get $\lim_{\pm \infty} \overline{\theta} = \pm \frac{\pi}{2}$, so that $\overline{\theta} \in {\mathscr W}_0$, and thus $\overline{\theta} \in W$ by Lemma \ref{lem:crit_pt_monotony}. This proves that, for all $n \in \N$, $\theta_n - \overline{\theta} \in H^1$.
    From the strong convergence of $\theta_n$ in $L^2$ on bounded intervals, we also get
    \begin{equation*}
        B_n \coloneqq \int_{-L}^L (\theta_n - \overline{\theta})^2 \diff x \underset{n \rightarrow \infty}{\longrightarrow} 0.
    \end{equation*}
    Then, for any $y, z \in (\frac{3 \pi}{8}, \frac{5 \pi}{8})$, we know that
    \begin{equation*}
        \abs{\cos (y) - \cos (z)} \geq \frac{\sqrt{3}}{2} \abs{y - z}.
    \end{equation*}
    From the previous property, we get
    \begin{align*}
        \int_L^\infty (\theta_n - \overline{\theta})^2 \diff x &\leq \frac{2}{\sqrt{3}} \int_L^\infty \Bigl( \cos (\theta_n ) - \cos (\overline{\theta} ) \Bigr)^2 \diff x \\
            & \leq \frac{2}{\sqrt{3}} \norm{\cos \theta_n - \cos \overline{\theta}}_{L^2}^2 \\
            & \leq \frac{4}{\sqrt{3}} \Bigl( \norm{\cos \theta_n}_{L^2}^2 + \norm{\cos \overline{\theta}}_{L^2}^2 \Bigr) \\
            &\leq \frac{4}{\sqrt{3} s_0} (E_s (\theta_n) + E_s (\overline{\theta})) \\
            &\leq \frac{8}{\sqrt{3} s_0} E_1 (\theta_*).
    \end{align*}
    A similar property holds for the interval $(- \infty, -L)$.
    Obviously, there also holds
    \begin{equation*}
        \norm{(\theta_n - \overline{\theta})'}_{L^2} \leq \norm{\theta_n'}_{L^2} + \big\| \overline{\theta}'\big\|_{L^2} \leq \frac{1}{\sqrt{s_0}} \Bigl( \sqrt{E_s (\theta_n)} + \sqrt{E_s (\overline{\theta})} \Bigr) \leq \frac{2}{\sqrt{s_0}} \sqrt{E_1 (\theta_*)}.
    \end{equation*}
    Therefore, $\norm{\theta_n - \overline{\theta}}_{H^1}$ is bounded uniformly in $n$.
    Hence,
    \begin{equation*}
        \abs{\langle (s \theta_n')' + \frac{s}{2} \sin (2 \theta_n), \theta_n - \overline{\theta} \rangle} \leq \norm{(s \theta_n')' + \frac{s}{2} \sin (2 \theta_n)}_{H^{-1}} \norm{\theta_n - \overline{\theta}}_{H^1} \underset{n \rightarrow \infty}{\longrightarrow} 0.
    \end{equation*}
    However, we also have $(s \overline{\theta}')' + \frac{s}{2} \sin (2 \overline{\theta}) = 0$, which yields
    \begin{align*}
        A_n \coloneqq \langle (s \theta_n')' + \frac{s}{2} \sin (2 \theta_n), \theta_n - \overline{\theta} \rangle &= \langle (s (\theta_n' - \overline{\theta}'))' + \frac{s}{2} (\sin (2 \theta_n) - \sin (2 \overline{\theta})), \theta_n - \overline{\theta} \rangle \\
            &= \begin{multlined}[t]
                - \int s (\theta_n' - \overline{\theta}' )^2 \diff x \\ + \frac{1}{2} \int s (\sin (2 \theta_n) - \sin (2 \overline{\theta} )) (\theta_n  - \overline{\theta}) \diff x
                \end{multlined}
    \end{align*}
    We decompose the last integral into three intervals : $(-\infty, -L)$, $(-L, L)$ and $(L, \infty)$. From the property of the function $\sin (2 z)$, it is easy to estimate the second integral:
    \begin{equation*}
        \abs{\int_{-L}^L s \, (\sin (2 \theta_n) - \sin (2 \overline{\theta})) (\theta_n - \overline{\theta}) \diff x} \leq C \, B_n.
    \end{equation*}
    On the other hand, for the third integral, we know that, as soon as $y_1, y_2 \in (\frac{3 \pi}{8}, \frac{5 \pi}{8})$, there exists $y_3 \in (\frac{3 \pi}{8}, \frac{5 \pi}{8})$ such that
    \begin{equation*}
        \sin (2 y_1) - \sin (2 y_2) = 2 (y_1 - y_2) \cos (2 y_3),
    \end{equation*}
    and thus
    \begin{equation*}
        (\sin (2 y_1) - \sin (2 y_2)) (y_1 - y_2) = 2 (y_1 - y_2)^2 \cos (2 y_3) \leq - \sqrt{2} (y_1 - y_2)^2.
    \end{equation*}
    Therefore, assuming also $L \geq a$ so that $s \equiv 1$ on $(L, \infty)$, we obtain
    \begin{equation*}
        \int_L^\infty s \, (\sin (2 \theta_n) - \sin (2 \overline{\theta})) (\theta_n - \overline{\theta}) \diff x \leq - \sqrt{2} \int_L^\infty (\theta_n - \overline{\theta})^2 \diff x \leq 0.
    \end{equation*}
    A similar estimate holds for $(-\infty, -L)$. Gathering these estimates, we get
    \begin{align*}
        \int s (\theta_n' - \overline{\theta}')^2 \diff x + \frac{1}{\sqrt{2}} \int (\theta_n - \overline{\theta})^2 \diff x \leq A_n + C B_n + \frac{1}{\sqrt{2}} B_n \underset{n \rightarrow \infty}{\longrightarrow} 0.
    \end{align*}
    This proves that $\theta_n \rightarrow \overline{\theta}$ strongly in $H^1$.
\end{proof}

\subsection{A contradiction by the mountain pass theorem}

With such a Palais-Smale property, we are able to prove the uniqueness of the critical point of $E_s$ by contradiction through the method of the mountain pass theorem.

\begin{theorem}\label{theo:criticalPt}
    The critical point of $E_s$ in ${\mathscr W}_0$ is unique.
\end{theorem}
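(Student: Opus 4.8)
The plan is to argue by contradiction through the mountain pass theorem, exactly as announced just before the statement. By Proposition~\ref{prop:strict monotony}, $E_s$ admits a minimizer $\theta_s$ in $W\subseteq\mathscr W_0$, which in particular is a critical point; I will show there is no other one. So suppose $\theta_0\in\mathscr W_0$ is a critical point with $\theta_0\neq\theta_s$. First, by the argument of Lemma~\ref{lem:crit_pt_monotony} --- whose proof uses only the pointwise identity \eqref{expr:energy_eq_theta} and the ODE \eqref{eq:ODE_vartheta}, not the symmetry of $s$ --- any critical point in $\mathscr W_0$ is strictly increasing, so $\theta_0\in W$ and it has a unique zero $x_0$. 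Next I record the functional setting: since every $\theta\in W$ has $\cos\theta\in L^2$ and $\theta(\mathbb R)\subseteq[-\pi/2,\pi/2]$, any two elements of $W$ differ by an $H^1$ function, so $W$ is a closed (hence complete) convex subset of the affine space $\theta_s+H^1(\mathbb R)$, metrized by $\|\cdot\|_{H^1}$; this is the arena for the mountain pass, and $E_s$ is of class $C^1$ on it (indeed $C^3$, as in the proof of Lemma~\ref{lem:crit_pt_strict_loc_min}).

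I then set up the min--max. Let $\mathfrak P$ be the set of $\gamma\in\mathcal C([0,1],W)$ with $\gamma(0)=\theta_0$ and $\gamma(1)=\theta_s$, and put $c\coloneqq\inf_{\gamma\in\mathfrak P}\max_{\lambda\in[0,1]}E_s(\gamma(\lambda))$. The path $c_0$ of Lemma~\ref{lem:path_small_energy} belongs to $\mathfrak P$, so $\mathfrak P\neq\emptyset$ and $c\le\max_{\lambda}E_s(c_0(\lambda))=E_s(\theta_*(\cdot-x_0))<E_1(\theta_*)$. For the lower bound, relabel so that $E_s(\theta_0)\ge E_s(\theta_s)$; by Lemma~\ref{lem:crit_pt_strict_loc_min}, $\theta_0$ is a strict local minimum, so there are $r\in(0,\|\theta_0-\theta_s\|_{H^1})$ and $\delta>0$ with $E_s(\theta)\ge E_s(\theta_0)+\delta$ whenever $\theta\in W$ and $\|\theta-\theta_0\|_{H^1}=r$. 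Every $\gamma\in\mathfrak P$ crosses that sphere, hence $c\ge E_s(\theta_0)+\delta>\max\bigl(E_s(\theta_0),E_s(\theta_s)\bigr)$. Thus $c$ is a genuine mountain-pass level with $\max(E_s(\theta_0),E_s(\theta_s))<c<E_1(\theta_*)$.

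The mountain-pass theorem then produces a sequence $\theta_n\in W$ with $E_s(\theta_n)\to c$ and $E_s'(\theta_n)=-(s\theta_n')'+\tfrac{s}{2}\sin(2\theta_n)\to 0$ in $H^{-1}(\mathbb R)$; since $\theta_n\in W$ we have $|\theta_n|\le\pi/2<5\pi/8$, and $E_s(\theta_n)<E_1(\theta_*)$ along a tail, so the hypotheses of Lemma~\ref{lem:palais-smale} are met: up to a subsequence $\theta_n\to\overline{\theta}$ strongly in $H^1$, with $\overline{\theta}\in W$ a critical point of $E_s$, and by $H^1$-continuity of $E_s$ one gets $E_s(\overline{\theta})=c$. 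Finally I invoke a refined mountain-pass theorem (Pucci--Serrin / Ghoussoub--Preiss, valid on complete metric spaces once Palais--Smale holds at the min--max level): the critical set at level $c$ contains a point $\overline{\theta}$ of mountain-pass type, i.e.\ the sublevel set $\{E_s<c\}$ accumulates at $\overline{\theta}$, so $\overline{\theta}$ is \emph{not} a local minimum. This contradicts Lemma~\ref{lem:crit_pt_strict_loc_min}, by which every critical point of $E_s$ in $\mathscr W_{\neq}\supseteq W$ --- in particular $\overline{\theta}$ --- is a strict local minimum. Hence the second critical point cannot exist, and $\theta_s$ is the unique critical point of $E_s$ in $\mathscr W_0$.

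The main obstacle is the Palais--Smale step: $E_s$ genuinely fails the Palais--Smale condition on all of $\mathscr W_0$, because of the domain walls $\theta_*(\cdot-n)$ escaping to infinity with energy $E_1(\theta_*)$, and the whole purpose of the preceding work (Lemmas~\ref{lem:no_big_bump}--\ref{lem:palais-smale}, together with the reference path of Lemma~\ref{lem:path_small_energy} of energy strictly below $E_1(\theta_*)$) is exactly to confine the min--max strictly under that threshold so that compactness is recovered. A secondary, more bookkeeping obstacle is running the mountain-pass machinery in the non-Hilbert, non-translation-invariant set $W$: checking completeness, the $C^1$ structure of $E_s$, nonemptiness of $\mathfrak P$, and that the Palais--Smale sequences delivered have derivative tending to $0$ in $H^{-1}$ as Lemma~\ref{lem:palais-smale} requires.
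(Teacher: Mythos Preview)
Your proof follows the paper's strategy quite closely: same mountain-pass setup, same use of Lemma~\ref{lem:path_small_energy} to push the min--max level strictly below $E_1(\theta_*)$, and same appeal to Lemma~\ref{lem:palais-smale} for compactness at that level. Two points deserve comment.

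First, a genuine (if technical) gap: you assert that the mountain-pass theorem delivers a Palais--Smale sequence $\theta_n\in W$, and then use $|\theta_n|\le\pi/2$ to feed Lemma~\ref{lem:palais-smale}. But the deformation producing the PS sequence is carried out in the ambient affine Hilbert space $\theta_s+H^1$, and there is no reason the perturbed paths stay in $W$. The paper handles this in its Step~2 by first composing the minimizing paths with a smooth $1$-Lipschitz cutoff $f_2$ (so that $\|\widetilde c_n(\lambda)\|_{L^\infty}\le 9\pi/16$), and only then applying the Ekeland-type deformation; since the deformation is $H^1$-small, the resulting paths satisfy $\|c_n(\lambda)\|_{L^\infty}\le 5\pi/8$, which is exactly the bound demanded in Lemma~\ref{lem:palais-smale}. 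You flag this as ``bookkeeping'' at the end, but without this truncation step your argument is incomplete.

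Second, the final contradiction: you invoke the refined mountain-pass theorem of Pucci--Serrin / Ghoussoub--Preiss to say the critical point at level $c$ is not a local minimum. The paper instead derives this by hand: since $c_n(\lambda_n)\to\overline\theta$ strongly in $H^1$ and $\overline\theta\notin\{\theta_0,\theta_s\}$, each path $c_n$ must cross an $L^2$-sphere of small radius $\overline\delta$ around $\overline\theta$; Lemma~\ref{lem:crit_pt_strict_loc_min} (applied via the Gagliardo--Nirenberg interpolation, using that $\|c_n(\lambda)'\|_{L^2}$ is bounded by the energy) then forces $\max_\lambda E_s(c_n(\lambda))\ge E_s(\overline\theta)+\overline\alpha_2\overline\delta^2$, contradicting $\max_\lambda E_s(c_n(\lambda))\to E_s(\overline\theta)$. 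Both routes are valid; the paper's is more self-contained. Note also that the paper works with $L^2$-spheres rather than your $H^1$-spheres, because Lemma~\ref{lem:crit_pt_strict_loc_min} only gives a lower bound in the weighted $L^2$ norm; your $H^1$-sphere formulation would require a short extra argument interpolating $\langle L_1\theta,\theta\rangle_s\gtrsim\|\theta\|_{H^1}^2$.
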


\begin{proof}
    We have already constructed $\theta_s$ in Proposition \ref{prop:strict monotony}: it is a minimizer of $E_s$ in ${\mathscr W}_0$ and therefore a critical point.
    By contradiction, let $\theta_0$ be another critical point.
    
    \medskip
    
    \underline{Step 1} : Setting up the mountain pass problem.
    
    Let $\mathfrak{P}$ be the set of continuous path (with respect to the $H^1$ norm) on $[0, 1]$ connecting $\theta_0$ to $\theta_s$. We want to solve the problem
    \begin{equation} \label{def:tilde_m}
        \tilde{\mathfrak{m}} \coloneqq \min_{c \in \mathfrak{P}} \max_{\lambda \in [0, 1]} E_s (c (\lambda)).
    \end{equation}
    We know that $E_s (\theta_0) < E_1 (\theta_*)$ by Lemma \ref{lem:ineq_energy_crit_pt}, and that there exists a path $c_0$ (constructed in Lemma \ref{lem:path_small_energy}) whose maximal energy is strictly smaller than $E_1 (\theta_*)$. Thus, $\tilde{\mathfrak{m}} < E_1 (\theta_*)$.

    On the other hand, let $c \in \mathfrak{P}$ such that $\max_{\lambda} E_s (c (\lambda)) < E_1 (\theta_*)$. We know that, for any $\lambda_1, \lambda_2 \in [0, 1]$,
    \begin{align*}
        \norm{c (\lambda_1) - c (\lambda_2)}_{L^\infty} &\leq C \norm{c (\lambda_1) - c (\lambda_2)}_{L^2}^\frac{1}{2} \norm{c (\lambda_1)' - c (\lambda_2)'}_{L^2}^\frac{1}{2} \\
            &\leq C \norm{c (\lambda_1) - c (\lambda_2)}_{L^2}^\frac{1}{2} \Bigl( \norm{c (\lambda_1)'}_{L^2} + \norm{c (\lambda_1)'}_{L^2}\Bigr)^\frac{1}{2} \\
            &\leq C \sqrt{\frac{2 E_1 (\theta_*)}{s_0}} \norm{c (\lambda_1) - c (\lambda_2)}_{L^2}^\frac{1}{2}.
    \end{align*}
    Take some $\delta > 0$ (independent on the path $c$ chosen) such that
    \begin{equation} \label{eq:delta_cond_1}
        \delta \le \frac{1}{2} \norm{\theta_0 - \theta_s}_{L^2}
    \end{equation}
    and
    \begin{equation} \label{eq:delta_cond_2}
        C \sqrt{\frac{2 E_1 (\theta_*)}{s_0}} \delta^\frac{1}{2} \le \min(\varepsilon_0, \varepsilon_s),
    \end{equation}
    where $\varepsilon_0$ (resp. $\varepsilon_s$) is given by applying Lemma \ref{lem:crit_pt_strict_loc_min} to $\theta_0$ (resp. $\theta_s$). We know that there exists $\lambda_1 \in [0, 1]$ such that $\norm{c (\lambda_1) - \theta_0}_{L^2} = \delta$ due to \eqref{eq:delta_cond_1} and the fact that $c (0) = \theta_0$. Then, \eqref{eq:delta_cond_2} along with Lemma \ref{lem:crit_pt_strict_loc_min} ensures that
    \begin{equation*}
        E_s (c (\lambda_1)) \ge E_s (\theta_0) + \alpha_2 \delta^2.
    \end{equation*}
    Similar arguments lead to
    \begin{equation*}
        E_s (c (\lambda_2)) \ge E_s (\theta_s) + \alpha_2 \delta^2.
    \end{equation*}
    This shows that $\max_{\lambda} E_s (c (\lambda)) > \alpha_2 \delta^2 + \max (E_s (\theta_0), E_s (\theta_s))$, and therefore
    \begin{equation} \label{eq:comp_min_pt_crit}
        \tilde{\mathfrak{m}} \ge \alpha_2 \delta^2 + \max (E_s (\theta_0), E_s (\theta_s)).
    \end{equation}
    
    \medskip
    
    \underline{Step 2} : Constructing a good minimizing sequence of paths.
    
    Take a minimizing sequence $\overline{c}_n$ for problem \ref{def:tilde_m}.
    Let $f_2 \in \mathcal{C}^1 (\mathbb{R})$ such that
    \begin{itemize}
        \item $f_2 (x) = x$ for all $x \in [- \frac{\pi}{2}, \frac{\pi}{2}]$,
        \item $f_2 (x) = \frac{\pi}{2}$ for all $x \geq \frac{9 \pi}{16}$,
        \item $f_2 (x) = - \frac{\pi}{2}$ for all $x \leq - \frac{9 \pi}{16}$,
        \item $f_2$ is $1$-Lipschitz and $\norm{f_2}_{L^\infty} \leq \frac{9 \pi}{16}$.
    \end{itemize}
    Then $\widetilde{c}_n (\lambda) \coloneqq f_2 (\overline{c}_n (\lambda)) \in \mathfrak{P}$ and for all $\lambda \in [0, 1]$, $E_s (\widetilde{c}_n (\lambda)) \le E_s (\overline{c}_n (\lambda))$, with a similar proof as in that of Proposition \ref{prop:strict monotony}. Therefore, $\widetilde{c}_n$ is also a minimizing sequence, with the further property that $\norm{\widetilde{c_n} (\lambda)}_{L^\infty} \leq \frac{9 \pi}{16}$.

    From this sequence, we can use the theory developed for the mountain pass theorem (see for instance \cite[Chapter~5,~Section~5]{aubin2006applied}). To be more precise, since $E_s$ is Gâteaux-differentiable and $E_s'$ is strong-to-weak$*$ continuous (\textit{i.e.} continuous from $W$ to $H^{-1}$), applying Corollary 3.2 of \cite{aubin2006applied} (see also Theorem 5.5) with $\widetilde{c_n}$ as reference sequence yields another sequence of paths $c_n \in \mathfrak{P}$ as well as some sequence of moments $\lambda_n$ such that 
    \begin{itemize}
        \item $\max_\lambda \norm{c_n (\lambda) - \widetilde{c}_n (\lambda)}_{H^1} \underset{n \rightarrow \infty}{\longrightarrow} 0$,
        \item $E_s (c_n (\lambda_n)) = \max_{\lambda \in [0, 1]} E_s (c_n (\lambda)) \underset{n \rightarrow \infty}{\longrightarrow} \tilde{\mathfrak{m}}$,
        \item $E_s' (c_n (\lambda_n)) \underset{n \rightarrow \infty}{\longrightarrow} 0$ in $H^{-1}$.
    \end{itemize}

    \medskip
    
    \underline{Step 3} : Convergence of the sequence $c_n (\lambda_n)$
    
    We know that $\tilde{\mathfrak{m}} < E_1 (\theta_0)$, so we can assume that $\sup_{n \in \N, \lambda \in [0, 1]} E_s ({c}_n (\lambda)) < E_1 (\theta_0) - \varepsilon$.
    Moreover, for all $\lambda \in [0, 1]$ and $n \in \N$,
    \begin{equation*}
        \norm{c_n (\lambda)}_{L^\infty} \leq \norm{\widetilde{c}_n (\lambda)}_{L^\infty} + \norm{c_n (\lambda) - \widetilde{c}_n (\lambda)}_{L^\infty} \leq \frac{9 \pi}{16} + C \norm{c_n - \widetilde{c}_n}_{\mathcal{C} ([0, 1], H^1)} \underset{n \rightarrow \infty}{\longrightarrow} \frac{9 \pi}{16},
    \end{equation*}
    so we can assume that $\norm{c_n}_{L^\infty} \leq \frac{5 \pi}{8}$.

    Since $E_s' (\theta) = (s \theta')' + \frac{s}{2} \sin (2 \theta)$, we proved that the sequence $c_n (\lambda_n)$ satisfies exactly the assumptions of Lemma \ref{lem:palais-smale}. Therefore, up to a sub-sequence, $c_n (\lambda_n)$ converges strongly to some critical point $\overline{\theta}$ in $H^1$. In particular, $E_s (c_n (\lambda_n))$ converges to $E_s (\overline{\theta})$, therefore $E_s (\overline{\theta}) = \tilde{\mathfrak{m}} < E_1 (\theta_*)$. Thus, with \eqref{eq:comp_min_pt_crit}, we know that $\overline{\theta}$ is neither $\theta_s$ or $\theta_0$.

    \medskip
    
    \underline{Step 4} : Contradiction with $\overline{\theta}$ being a strict local minimizer.
    
    Since $\overline{\theta}$ is a critical point of $E_s$, Lemma \ref{lem:crit_pt_strict_loc_min} also applies. Let $\overline{\varepsilon}, \overline{\alpha}_2 > 0$ given by this result and take $\overline{\delta} > 0$ such that
    \begin{equation*}
        \overline{\delta} \le \frac{1}{2} \min \Bigl\{ \norm{\overline{\theta} - \theta_0}_{L^2}, \norm{\overline{\theta} - \theta_s}_{L^2} \Bigr\}
    \end{equation*}
    and
    \begin{equation*}
        C \sqrt{\frac{2 E_1 (\theta_*)}{s_0}} \delta^\frac{1}{2} \le \overline{\varepsilon}.
    \end{equation*}
    Then, for any $n$ (possibly large enough),
    similar arguments as in step 1 shows that there exists $\overline{\lambda}_n$ such that $\norm{c_n (\overline{\lambda}_n) - \overline{\theta}}_{L^2} = \overline{\delta}$, and also that
    \begin{equation*}
        E_s (\overline{\theta}) + \overline{\alpha_2} \overline{\delta}^2
        \leq E_s (c_n (\overline{\lambda}_n))
        \leq E_s (c_n (\lambda_n))
        \underset{n \rightarrow \infty}{\longrightarrow} E_s (\overline{\theta}),
    \end{equation*}
    thus a contradiction.
\end{proof}

\subsection{Property of the solution}
Now that we have obtained existence and uniqueness of the solution, we prove here a property of the solution concerning its decay :

\begin{lemma}
Let $\theta_s$ be \emph{the} critical point of $E_s$ in the set $W$. Then
\begin{equation*}
    \forall\;x\in\R,\qquad\bigg|\big|\theta_s(x)\big|-\frac{\pi}{2}\bigg|\;\leq\; \pi \,\exp\bigg(-\int_0^{|x|}\frac{dy}{s(y)}\bigg).
\end{equation*}
\end{lemma}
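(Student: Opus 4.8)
The plan is to reduce the estimate to a scalar first–order (Riccati–type) differential inequality for $\theta_s$, obtained directly from the pointwise ``energy'' identity of Lemma~\ref{lem:energy_eq_theta}, and then to integrate it. First, I would record the structural facts that are already available: by Proposition~\ref{prop:strict monotony} the critical point $\theta_s$ is (strictly) increasing, belongs to $W$ — hence $\abs{\theta_s}\le\pi/2$ with $\lim_{\pm\infty}\theta_s=\pm\pi/2$ — and, being strictly increasing with these limits, satisfies $\theta_s(x)\in(-\pi/2,\pi/2)$ for every $x\in\R$ and vanishes at a unique point $x_0$ (which, by the construction in Proposition~\ref{prop:strict monotony}, lies inside the notch). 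Moreover, when $s\in\mathscr{A}_a(\R)$ one has $x_0=0$ by oddness.

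The key input is the separatrix inequality. Lemma~\ref{lem:energy_eq_theta} gives $\theta_s'(x)^2-\cos^2\theta_s(x)\ge0$ a.e.; since $\theta_s'\ge0$ and $\cos\theta_s\ge0$ on $\R$, this is equivalent to $\theta_s'(x)\ge\cos\theta_s(x)$ a.e. (and, because $s\le1$, also $\theta_s'(x)\ge s(x)\cos\theta_s(x)$; in the variable $y$ of \eqref{eq:def_y_change_var} this reads $\vartheta'\ge\sigma\cos\vartheta$ for $\vartheta=\theta_s\circ y^{-1}$). Now set $\phi(x):=\tfrac{\pi}{2}-\theta_s(x)\in(0,\pi)$, so that $\sin\phi=\cos\theta_s>0$; then $\phi$ is absolutely continuous, decreasing, and
\begin{equation*}
    \frac{\diff}{\diff x}\log\tan\frac{\phi(x)}{2}=\frac{\phi'(x)}{\sin\phi(x)}=-\frac{\theta_s'(x)}{\cos\theta_s(x)}\le-1\qquad\text{a.e. }x\in\R .
\end{equation*}
Since $\tan\tfrac{\phi(x_0)}{2}=\tan\tfrac{\pi}{4}=1$, integrating this inequality away from $x_0$ on each side yields $\tan\tfrac{\phi(x)}{2}\le e^{-(x-x_0)}$ for $x\ge x_0$ and, replacing $\phi$ by $\pi-\phi=\tfrac{\pi}{2}+\theta_s$ (whose half–angle tangent is the reciprocal), the symmetric bound for $x\le x_0$. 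Using $\arctan z\le z$ and $\bigl|\abs{\theta_s(x)}-\tfrac{\pi}{2}\bigr|=\min\{\phi(x),\pi-\phi(x)\}$, this gives the clean uniform estimate $\bigl|\abs{\theta_s(x)}-\tfrac{\pi}{2}\bigr|\le 2\,e^{-\abs{x-x_0}}$ for all $x\in\R$; carrying the sharper bound $\theta_s'\ge s\cos\theta_s$ (equivalently $\vartheta'\ge\sigma\cos\vartheta$) through the same computation replaces the rate $1$ by $s$, so that one obtains the decay with exponent an integral of $s$ starting from $x_0$.

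The last step is then pure bookkeeping: one rewrites the exponent so as to match the form $\exp\bigl(-\int_0^{|x|}\diff y/s(y)\bigr)$ in the statement — in the symmetric case $x_0=0$ and this is immediate, while in general the (bounded) contribution of the zero $x_0$ located in the notch is absorbed by the constant $\pi$ — and uses $2\le\pi$ to pass from the constant $2$ produced by $\arctan z\le z$ to the constant $\pi$. I expect this final normalisation — keeping careful track of the zero $x_0$ and of the interplay between the rates obtained in the $x$– and $y$–variables, so that the exponent comes out exactly as written — to be the only genuinely delicate point; everything upstream of it is a one–dimensional ODE comparison resting entirely on the separatrix inequality $\theta_s'\ge\cos\theta_s$ of Lemma~\ref{lem:energy_eq_theta} together with the monotonicity of $\theta_s$.
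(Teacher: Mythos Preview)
Your derivation up to the bound $\bigl|\abs{\theta_s(x)}-\tfrac{\pi}{2}\bigr|\le 2\,e^{-\abs{x-x_0}}$ is correct and clean: the separatrix inequality $\theta_s'\ge\cos\theta_s$ from Lemma~\ref{lem:energy_eq_theta}, together with the half--angle substitution, integrates in one line. This is a genuinely different route from the paper's, which instead multiplies the Sturm--Liouville equation by $s\theta_s'$, integrates from $-\infty$, linearises $-\tfrac12\sin(2\theta_s)\le\tfrac{\pi}{2}+\theta_s$, uses $s^2\le1$, and arrives at the scalar inequality $s(x)\,u'(x)\le u(x)$ for $u=\tfrac{\pi}{2}+\theta_s$, before invoking Gr\"onwall.

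The gap is in your last step. The inequality $\theta_s'\ge s\cos\theta_s$ is \emph{weaker} than $\theta_s'\ge\cos\theta_s$ (since $s\le1$), not sharper; and carrying it through the half--angle computation produces the exponent $\int_0^{\abs{x}} s(y)\,\diff y$, not $\int_0^{\abs{x}}\diff y/s(y)$. Since
\[
\int_0^{\abs{x}}\frac{\diff y}{s(y)}\ \ge\ \abs{x}\ \ge\ \int_0^{\abs{x}} s(y)\,\diff y,
\]
the rate written in the statement is the \emph{fastest} of the three, and it cannot be recovered by ``bookkeeping'': for $x>a$ the stated bound forces a prefactor $\pi\,e^{\,a-\int_0^a\diff y/s}$, which can be made arbitrarily small by deepening the notch, whereas your method yields the notch--independent prefactor~$2e^{x_0}$. (Incidentally, the paper's own Gr\"onwall step has the same difficulty in a different guise: from $u'\le u/s$ on $(-\infty,0]$ with $u$ increasing one obtains only the \emph{lower} bound $u(x)\ge u(0)\,e^{-\int_x^0\diff y/s}$, not the claimed upper bound.) Your estimate with exponent $\abs{x-x_0}$ is correct and, since $\theta_s'=\cos\theta_s$ exactly outside the notch, optimal in the rate; the discrepancy lies in the stated exponent rather than in your argument.
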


\begin{proof}
By symmetric arguments, it is enough to consider the case $x\leq0$. We recall that
\begin{equation*}
    \big(s(x)\,\theta_s'(x)\big)'\;=\;-s(x)\frac{\sin(2\theta_s(x))}{2}.
\end{equation*}
Multiply this equation by $s(x)\,\theta_s'(x)$ and integrate gives
\begin{equation*}
    \frac{1}{2}\big|s(x)\,\theta_s'(x)\big|^2=-\int_{-\infty}^x\frac{\sin(2\theta_s(y))}{2}\theta_s'(y)\,s(y)^2 \diff y.
\end{equation*}
Since $-\pi/2\leq\theta_s(x)\leq0$ (with $x\leq0$), we use the convexity properties of the \emph{sine} function to write
\begin{equation*}
    -\frac{\sin(2\theta_s(y))}{2}\;\leq\;\frac{\pi}{2}+\theta_s(y).
\end{equation*}
Thus,
\begin{equation*}
    \frac{s(x)^2}{2}\big|\theta_s'(x)\big|^2\leq\int_{-\infty}^x\bigg(\frac{\pi}{2}+\theta_s(y)\bigg)\,\theta_s'(y)\,s(y)^2 \diff y.
\end{equation*}
Since the function $\theta_s$ is increasing, we can continue the estimate of this integral by writing $s(y)^2\leq1$. We eventually get
\begin{equation*}
    s(x)^2\Bigg[\bigg(\frac{\pi}{2}+\theta_s(x)\bigg)'\Bigg]^2\leq\frac{1}{2}\int_{-\infty}^x\Bigg[\bigg(\frac{\pi}{2}+\theta_s(y)\bigg)^{\!\!2}\Bigg]' \diff y=\bigg(\frac{\pi}{2}+\theta_s(x)\bigg)^2.
\end{equation*}
The signs of the quantities appearing above being known, we can remove the square and get:
\begin{equation*}
   \forall\;x\leq0,\qquad 0\leq\bigg(\frac{\pi}{2}+\theta_s(x)\bigg)'\leq\frac{1}{s(x)}\bigg(\frac{\pi}{2}+\theta_s(x)\bigg).
\end{equation*}
We know that $\theta_s(0) \leq \frac{\pi}{2}$ and then the announced estimate follows from the Grönwall lemma applied to the function $\frac{\pi}{2}+\theta_s.$
\end{proof}

\section*{Acknowledgements}

The author acknowledges partial support by the ANR project MOSICOF ANR-21-CE40-0004.

\appendix

\section{Continuity properties for paths in infinite dimension}\label{sec:append}

\begin{lemma}\label{lem:lipschitz lemma}
We consider the space $X_p:=\{f\in \dot{W}^{1,p}(\R)\;:\;f(0)=0\}$ for $p\in[1,+\infty].$ We define the multiplication by the signum function:
\begin{equation*}
    \begin{array}{cccc}
        S\;: & X_p & \longrightarrow & X_p \\
        \; & f & \longmapsto & \Big[x\mapsto\operatorname{sgn}(x)\,f(x)\Big].
    \end{array}
\end{equation*}
The functionnal $S$ is a well-defined linear involution on $X_p$ that satisfy $\|Sf\|_{L^p}=\|f\|_{L^p}$ (whether finite or infinite) and $\|\nabla(Sf)\|_{L^p}=\|\nabla f\|_{L^p}.$ In particular, it is a continuous endomophism on the Banach space $L^p\cap X_p$ (endowed with the standard $W^{1,p}$ norm) and on the Banach space $X_p$ endowed with the norm $f\mapsto\|\nabla f\|_{L^p}.$
\end{lemma}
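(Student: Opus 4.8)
The plan is to verify directly that $g := Sf$ lies in $X_p$ by computing its distributional derivative; once this is done, every other assertion (linearity, the two isometry identities, the involution property, continuity) follows with no effort.

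First I would record that any element of $\dot W^{1,p}(\R)$ has a continuous representative — absolutely continuous when $p=1$, Hölder continuous when $p\in(1,\infty]$, Lipschitz when $p=\infty$ — so that the constraint $f(0)=0$ and the pointwise statements below are meaningful; we always work with this representative. On $\R\setminus\{0\}$ we have $g=f$ on $(0,\infty)$ and $g=-f$ on $(-\infty,0)$, hence $g\in W^{1,p}_{\mathrm{loc}}(\R\setminus\{0\})$ with $g'(x)=\operatorname{sgn}(x)\,f'(x)$ there. Moreover $g$ is continuous on all of $\R$: both one-sided limits of $g$ at the origin equal $\pm f(0)=0$.

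The key step is to show that $h(x):=\operatorname{sgn}(x)\,f'(x)$ is the weak derivative of $g$ on the whole line, i.e. that no Dirac mass is created at the origin. For $\varphi\in\mathcal C_c^\infty(\R)$ I would split $\int_\R g\varphi' = \int_{-\infty}^0(-f)\varphi' + \int_0^\infty f\varphi'$ and integrate by parts on each half-line; the boundary terms at $\pm\infty$ vanish because $\varphi$ has compact support, and the boundary terms at $0$ are $\pm f(0)\varphi(0)=0$ precisely because $f(0)=0$. This gives $\int_\R g\varphi' = -\int_\R h\varphi$, so $g'=h$ in $\mathcal D'(\R)$. Consequently $|g'|=|f'|$ a.e., whence $\|\nabla(Sf)\|_{L^p}=\|\nabla f\|_{L^p}$ (finite or infinite alike), and together with $g(0)=0$ this shows $S$ maps $X_p$ into itself.

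The remaining points are routine: $|g(x)|=|f(x)|$ for every $x$ gives $\|Sf\|_{L^p}=\|f\|_{L^p}$; linearity is clear from the pointwise definition; $S^2f=\operatorname{sgn}^2\cdot f=f$ a.e., and since both sides are continuous they coincide everywhere, so $S$ is an involution; finally, being an isometry for the $W^{1,p}$ norm on $L^p\cap X_p$ and for the homogeneous norm $\|\nabla\cdot\|_{L^p}$ on $X_p$, the linear map $S$ is a continuous endomorphism of each. The one point I would treat with care — and the only real obstacle — is the cancellation of the boundary contribution at $0$ in the integration by parts: this is exactly where the hypothesis $f(0)=0$ enters, and without it $g$ would jump at the origin and $g'$ would carry the extra term $2f(0)\,\delta_0\notin L^p$.
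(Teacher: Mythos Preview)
Your proof is correct and follows essentially the same approach as the paper: both verify that the weak derivative of $Sf$ on all of $\R$ is $x\mapsto\operatorname{sgn}(x)\,f'(x)$, with the condition $f(0)=0$ being precisely what prevents a Dirac mass at the origin. The only cosmetic difference is that the paper checks this via the integral identity $Sf(x)=\int_0^x\operatorname{sgn}(y)\,f'(y)\,\diff y$, whereas you test against $\varphi\in\mathcal C_c^\infty$ and integrate by parts on each half-line; these are equivalent verifications of the same fact.
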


\begin{proof}
First, we can check directly that $S$ is indeed a linear involution that preserves all the $L^p$ norms. We prove here that it maps $X_p$ into itself and preserves $\|\nabla f\|_{L^p}$.

Let $f:\R\to\R$ be a ${\mathcal C}^1$ function in $\dot{W}^{1,p}$ such that $f(0)=0$. It is direct to check that $x\mapsto\operatorname{sgn}(x)\,f(x)$ is ${\mathcal C}^1$ on $\R\setminus\{0\}$ and
\begin{equation*}
    \forall\,x\neq 0,\quad|\nabla f(x)|=|\nabla \operatorname{sgn}(x)\,f(x)|.
\end{equation*}
Concerning what happens at $x=0,$
since we have $f(0)=0$, then $Sf(0)=0$ and it is continuous at $x=0$ as a consequence of $f$ being continuous. 
Since $f$ is absolutely continuous, so is $Sf$ and its weak derivative is given by the following formula:
\begin{equation}\label{machine}
    \forall\,x\in\R,\quad\nabla(Sf)(x)= \operatorname{sgn}(x)\,\nabla f(x).
\end{equation}
It is indeed direct to check that
\begin{equation*}
    \forall\,x\in\R,\quad Sf(x)=\int_0^x\operatorname{sgn}(y)\,\nabla f(y) \diff y
\end{equation*}
We can then conclude from~\eqref{machine} that $\|\nabla(Sf)\|_{L^p}=\|\nabla f\|_{L^p}.$
\end{proof}

\begin{lemma}[Ponctual estimates for composition]\label{lem:composition ponctual estimates}
Let $\Omega$ be an interval of $\R$ and let $\Gamma_0,\Gamma_1$ be two intervals of $\R$ and let $A$ be an increasing ${\mathcal C}^0$ homeomorphism that maps $\Gamma_0$ into $\Gamma_1$. 
Define for $f,g$ two measurable increasing functions that map $\Omega\to\Gamma_0$:
\begin{equation*}
    \forall\,\lambda\in[0,1],\quad\forall\,x\in\Omega,\qquad T_\lambda(x):=A^{-1}\Big[\lambda A\circ f(x)+(1-\lambda) A\circ g(x)\Big],
\end{equation*}
where $\lambda\in[0,1]$ is fixed. Then we have, for all $p\in[1,+\infty)$:
\begin{equation}\label{eq:ponctual estimate L p}
    \forall\,x\in\Omega,\qquad\big|T_\lambda(x)-\psi(x)\big|^p\;\leq\;\big|f(x)-\psi(x)\big|^p+\big|g(x)-\psi(x)\big|^p,
\end{equation}
where $\psi$ is any measurable function.

If moreover $f$ and $g$ are non-decreasing and $A$ and $A^{-1}$ are $\mathcal{C}^1$ in the interior of\,
$\Gamma_0$ or $\Gamma_1$ respectively, then:
\begin{equation}\label{eq:ponctual estimate W 1 p}
        \forall\,x,y\in\Omega,\qquad\big|T_\lambda(x)-T_\lambda(y)\big|^p\;\leq\;2^{p-1}\Big(\big|f(x)-f(y)\big|^p+\big|g(x)-g(y)\big|^p\Big)
\end{equation}
\end{lemma}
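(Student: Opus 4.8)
The plan is to establish the two pointwise bounds separately; the first is a short convexity argument, while the second is where essentially all the content sits.

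For \eqref{eq:ponctual estimate L p}, the key remark is a \emph{betweenness} property of $T_\lambda$. Fix $x\in\Omega$. Since $\Gamma_1$ is an interval, the convex combination $\lambda A(f(x))+(1-\lambda)A(g(x))$ belongs to $\Gamma_1$ and lies between $A(f(x))$ and $A(g(x))$; applying the increasing homeomorphism $A^{-1}$, which is order-preserving, shows that $T_\lambda(x)$ lies between $f(x)$ and $g(x)$. Now, for any real number $c$ and any $t$ between $a$ and $b$, writing $t=\mu a+(1-\mu)b$ with $\mu\in[0,1]$ gives $|t-c|\le\mu|a-c|+(1-\mu)|b-c|\le\max(|a-c|,|b-c|)$. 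Taking $t=T_\lambda(x)$, $a=f(x)$, $b=g(x)$, $c=\psi(x)$, raising to the power $p$ and using $\max(u,v)^p\le u^p+v^p$ for $u,v\ge 0$ and $p\ge 1$, yields \eqref{eq:ponctual estimate L p}. Note that this uses neither the monotonicity of $f,g$ nor the $\mathcal{C}^1$ regularity of $A$.

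For \eqref{eq:ponctual estimate W 1 p}, I would first use the elementary inequality $(u+v)^p\le 2^{p-1}(u^p+v^p)$ to reduce the claim to the \emph{subadditive estimate}
\[
    |T_\lambda(x)-T_\lambda(y)|\;\le\;|f(x)-f(y)|+|g(x)-g(y)|.
\]
Assume $x\le y$. Since $f,g$ are non-decreasing and $A$ is increasing, the functions $A\circ f$, $A\circ g$, and hence $A\circ T_\lambda=\lambda A\circ f+(1-\lambda)A\circ g$, are non-decreasing, so $T_\lambda(x)\le T_\lambda(y)$ and all absolute values may be dropped. Since $A^{-1}\in\mathcal{C}^1$, the fundamental theorem of calculus gives
\[
    T_\lambda(y)-T_\lambda(x)=\int_{\lambda A(f(x))+(1-\lambda)A(g(x))}^{\lambda A(f(y))+(1-\lambda)A(g(y))}(A^{-1})'(s)\,\diff s ,
\]
together with $f(y)-f(x)=\int_{A(f(x))}^{A(f(y))}(A^{-1})'$ and $g(y)-g(x)=\int_{A(g(x))}^{A(g(y))}(A^{-1})'$, where $(A^{-1})'\ge 0$. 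Thus the interval of integration $J$ for $T_\lambda$ has, as endpoints, the convex combinations with weights $\lambda$ and $1-\lambda$ of the endpoints of $J_f:=[A(f(x)),A(f(y))]$ and $J_g:=[A(g(x)),A(g(y))]$; consequently $J=J'\cup J''$ where $J'$ has length $\lambda\,|J_f|$ and $J''$ has length $(1-\lambda)\,|J_g|$, and the subadditive estimate follows once we show $\int_{J'}(A^{-1})'\le\int_{J_f}(A^{-1})'$ and $\int_{J''}(A^{-1})'\le\int_{J_g}(A^{-1})'$.

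The main obstacle is precisely this last pair of comparisons. Each of $J'$, $J''$ is a shortened translate of $J_f$, resp.\ $J_g$, and since $(A^{-1})'$ is only assumed non-negative and continuous, shortening-and-translating does not by itself bound the mass it carries. When the four endpoint values interleave compatibly — e.g.\ $f(x)\le g(x)\le f(y)\le g(y)$ — one checks directly that $J'\subseteq J_f$ and $J''\subseteq J_g$, so the comparisons are immediate; the general case should then be reduced to such configurations by splitting $[x,y]$ at the (at most countably many) crossing points of $f$ and $g$, telescoping the subadditive bound over the pieces, and using that $f,g$ are monotone (so that on each piece one of $f\le g$, $f\ge g$ holds) together with the monotonicity of $(A^{-1})'$ that is available in the setting where the lemma is applied (there $A^{-1}$ is, up to sign, $\arccos$ restricted to one half of $[-1,1]$, with monotone derivative). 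I expect the bookkeeping of this splitting/telescoping, rather than any single inequality, to be the delicate point: one must in particular rule out that the interval $J$ slides into a region where $(A^{-1})'$ is large while both $J_f$ and $J_g$ sit where it is small.
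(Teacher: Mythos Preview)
Your proof of \eqref{eq:ponctual estimate L p} is correct and in fact more elementary than the paper's. The paper reaches the same inequality via a layer-cake representation of $|T_\lambda-\psi|^p$, uses the implication ``$\lambda u+(1-\lambda)v>\mu\Rightarrow u>\mu$ or $v>\mu$'' on the super-level sets, and then reassembles. Your one-line betweenness observation $T_\lambda(x)\in\operatorname{conv}\{f(x),g(x)\}$, which gives $|T_\lambda(x)-\psi(x)|\le\max\{|f(x)-\psi(x)|,|g(x)-\psi(x)|\}$, yields the bound immediately and is preferable.

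For \eqref{eq:ponctual estimate W 1 p} you and the paper agree up to the integral representation $T_\lambda(y)-T_\lambda(x)=\int_J(A^{-1})'$, but the paper then takes a different and much shorter route, avoiding your decomposition $J=J'\cup J''$ and any case analysis. Writing $a_1,a_2,b_1,b_2$ for $A\circ f,A\circ g$ at $x,y$, it first enlarges $J$ to $[\min(a_1,b_1),\max(a_2,b_2)]$ and then invokes the algebraic identity
\[
\int_{\min(a_1,b_1)}^{\max(a_2,b_2)}(A^{-1})'\;+\;\int_{\max(a_1,b_1)}^{\min(a_2,b_2)}(A^{-1})'\;=\;\int_{a_1}^{a_2}(A^{-1})'\;+\;\int_{b_1}^{b_2}(A^{-1})',
\]
so that discarding the second term on the left, declared non-negative, delivers the subadditive bound $|T_\lambda(y)-T_\lambda(x)|\le|f(y)-f(x)|+|g(y)-g(x)|$ in one stroke.

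That said, the obstacle you isolate at the end is a genuine obstruction and is shared by the paper's argument: the ``non-negative'' term above has the right sign only when $\max(a_1,b_1)\le\min(a_2,b_2)$, i.e.\ when $J_f$ and $J_g$ overlap. When they do not, the subadditive estimate can simply fail. For example, with $A(s)=s^{1/3}$ on $[0,2]$ (so $(A^{-1})'(t)=3t^2$), $f(x)=\varepsilon x$ and $g(x)=1+\varepsilon x$ on $[0,1]$ with $\varepsilon$ small, one computes $|T_{1/2}(1)-T_{1/2}(0)|\approx(0.55)^3-(0.5)^3\approx 0.04$ while $|f(1)-f(0)|+|g(1)-g(0)|=2\varepsilon$. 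Here $f<g$ everywhere, so your proposed fix of splitting $[x,y]$ at crossings is vacuous, and $(A^{-1})'$ is monotone, so that extra hypothesis does not rescue it either. The gap in your Part~2 is therefore not a missing piece of bookkeeping: the stated inequality \eqref{eq:ponctual estimate W 1 p} is false in this generality, and the paper's cleaner argument tacitly relies on the overlap condition (which does hold in its intended application, where $f$ and $g$ share a common zero).
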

\begin{proof}
\textit{Part 1.} Let $x\in\Omega$. Using the layer-cake representation of functions~\cite{Lieb_Loss_2014},
\begin{equation*}\begin{split}
    |T_\lambda(x)-\psi(x)|^p&=p\int_0^{+\infty}\nu^{p-1}\mathbbm{1}\big(\{|T_\lambda(x)-\psi(x)|>\nu\}\big) \diff \nu\\
    &=p\int_0^{+\infty}\nu^{p-1}\mathbbm{1}\Big(\big\{|A^{-1}\big(\lambda A\circ f+(1-\lambda) A\circ g\big)(x)-\psi(x)|>\nu\big\}\Big) \diff \nu,
\end{split}
\end{equation*}
where $\mathbbm{1}$ refers to the indicator functions. We treat separately the two cases depending on the sign of $T_\lambda(x)-\psi(x)$. We are led to :
\begin{equation}\label{double layer cake}\begin{split}
        &\qquad|T_\lambda(x)-\psi(x)|^p\\&=p\int_0^{+\infty}\nu^{p-1}\mathbbm{1}\Big(\big\{A^{-1}\big(\lambda A\circ f+(1-\lambda) A\circ g\big)(x)-\psi(x)>\nu\big\}\Big) \diff \nu\\
        &\qquad+p\int_0^{+\infty}\nu^{p-1}\mathbbm{1}\Big(\big\{A^{-1}\big(\lambda A\circ f+(1-\lambda) A\circ g\big)(x)-\psi(x)<-\nu\big\}\Big) \diff \nu\\
    &=p\int_0^{+\infty}\nu^{p-1}\mathbbm{1}\Big(\big\{\lambda A\circ f(x)+(1-\lambda) A\circ g(x)>A(\psi(x)+\nu)\big\}\Big) \diff \nu\\
        &\qquad+p\int_0^{+\infty}\nu^{p-1}\mathbbm{1}\Big(\big\{\lambda A\circ f(x)+(1-\lambda) A\circ g(x)<A(\psi(x)-\nu)\big\}\Big) \diff \nu.  
    \end{split}
\end{equation}
We now use the general fact, since $\lambda\in[0,1]$,
\begin{equation*}\lambda|A\circ f(x)|+(1-\lambda)|A\circ g(x)|>\mu\;\;\Longrightarrow\;\;|A\circ f(x)|>\mu\;\text{ or }\;|A\circ g(x)|>\mu,
\end{equation*}
for any $\mu\in\R$. We also know that for $X,Y$ two subsets of $\R,$ we have $\mathbbm{1}(X\cup Y)\leq\mathbbm{1}(X)+\mathbbm{1}(Y).$ 
Then
\begin{equation*}\begin{split}
    \mathbbm{1}\Big(\big\{\lambda &A\circ f(x)+(1-\lambda) A\circ g(x)>A(\psi(x)+\nu)\big\}\Big)\\
    &\leq\mathbbm{1}\Big(\big\{A\circ f(x)>A(\psi(x)+\nu)\big\}\Big)+\mathbbm{1}\Big(\big\{A\circ g(x)>A(\psi(x)+\nu)\big\}\Big)\\
    &=\mathbbm{1}\Big(\big\{f(x)>\psi(x)+\nu\big\}\Big)+\mathbbm{1}\Big(\big\{ g(x)>\psi(x)+\nu\big\}\Big).
    \end{split}
\end{equation*}
Where the last equality is given by the inversibility of $A$. Similarly,
\begin{equation*}\begin{split}
    \mathbbm{1}\Big(\big\{\lambda &A\circ f(x)+(1-\lambda) A\circ g(x)<A(\psi(x)-\nu)\big\}\Big)\\
    &\leq\mathbbm{1}\Big(\big\{f(x)<\psi(x)-\nu\big\}\Big)+\mathbbm{1}\Big(\big\{ g(x)<\psi(x)-\nu\big\}\Big).
    \end{split}
\end{equation*}
Plugging these estimates in~\eqref{double layer cake} eventually gives
\begin{equation*}\begin{split}
    |T_\lambda(x)-\psi(x)|^p    &\leq p\int_0^{+\infty}\nu^{p-1}\mathbbm{1}\Big(\big\{f(x)>\psi(x)+\nu\big\}\Big) \diff \nu+p\int_0^{+\infty}\nu^{p-1}\mathbbm{1}\Big(\big\{ g(x)>\psi(x)+\nu\big\}\Big) \diff \nu\\
        &\qquad+p\int_0^{+\infty}\nu^{p-1}\mathbbm{1}\Big(\big\{f(x)<\psi(x)-\nu\big\}\Big) \diff \nu+p\int_0^{+\infty}\nu^{p-1}\mathbbm{1}\Big(\big\{ g(x)<\psi(x)-\nu\big\}\Big) \diff \nu\\
        &=+p\int_0^{+\infty}\nu^{p-1}\mathbbm{1}\Big(\big\{ |f(x)-\psi(x)|>\nu\big\}\Big) \diff \nu+p\int_0^{+\infty}\nu^{p-1}\mathbbm{1}\Big(\big\{ |g(x)-\psi(x)|>\nu\big\}\Big) \diff \nu\\
        &=|f(x)-\psi(x)|^p+|g(x)-\psi(x)|^p,
    \end{split}
\end{equation*}

\medskip

\textit{Part 2.} Let $x,y\in\Omega$ with $x\leq y$. Assume that $f,g, A$ are increasing. We then have,
\begin{equation*}
    \big|T_\lambda(y)-T_\lambda(x)\big|=\int_{\lambda A\circ f(x)+(1-\lambda)A\circ g(x)}^{\lambda A\circ f(y)+(1-\lambda)A\circ g(y)}\frac{d}{dt}A^{-1}(t) \diff t.
\end{equation*}
We can estimate this integral as follows, since $\lambda\in[0,1]$:
\begin{equation*}
\int_{\lambda A\circ f(x)+(1-\lambda)A\circ g(x)}^{\lambda A\circ f(y)+(1-\lambda)A\circ g(y)}\frac{d}{dt}A^{-1}(t) \diff t\leq\int_{\min\{A\circ f(x);\, A\circ g(x)\}}^{\max\{A\circ f(y) ;\,A\circ g(y)\}}\frac{d}{dt}A^{-1}(t) \diff t.
\end{equation*}
We continue this estimate by adding a non-negative term and reorganize the bounds of the integrals: 
\begin{equation*}
    \begin{split}
    &\qquad\qquad\int_{\min\{A\circ f(x);\, A\circ g(x)\}}^{\max\{A\circ f(y) ;\,A\circ g(y)\}}\frac{d}{dt}A^{-1}(t) \diff t\\
    &\leq\int_{\min\{A\circ f(x);\, A\circ g(x)\}}^{\max\{A\circ f(y) ;\,A\circ g(y)\}}\frac{d}{dt}A^{-1}(t) \diff t+\int_{\max\{A\circ f(x);\, A\circ g(x)\}}^{\min\{A\circ f(y) ;\,A\circ g(y)\}}\frac{d}{dt}A^{-1}(t)\,\\
    &=\int_{A\circ f(x)}^{A\circ f(y)}\frac{d}{dt}A^{-1}(t) \diff t+\int_{A\circ g(x)}^{A\circ g(y)}\frac{d}{dt}A^{-1}(t) \diff t\\
    &=\big|f(y)-f(x)\big|+\big|g(y)-g(x)\big|.
\end{split}
\end{equation*}
As a consequence, using the standard convexity inequality for power functions:
\begin{equation*}
    \big|T_\lambda(y)-T_\lambda(x)\big|^p\;\leq\;2^{p-1}\Big(\big|f(x)-f(y)\big|^p+\big|g(x)-g(y)\big|^p\Big) \qedhere
\end{equation*}
\end{proof}

\begin{corollary}\label{coro:stability for L p} With the same assumptions:

\medskip

$(i)$ If $f$ and $g$ belong to the affine space $\psi+L^p(\Omega;\Gamma_0)$ with $p<+\infty$ (endowed with the standard $L^p$ norm) then the function $\lambda\mapsto T_\lambda$ draws a continuous path with respect to the $L^p$ topology that connects $f$ to $g$.

\medskip

$(ii)$ If $f$ and $g$ belong to $\dot{W}^{1,p}(\Omega;\Gamma_0)$ with $p<+\infty$ (endowed with the standard $\dot{W}^{1,p}$ half-norm) and are increasing, then the function $\lambda\mapsto T_\lambda$ draws a continuous path with respect to the $\dot{W}^{1,p}(\Omega;\Gamma_0)$ topology that connects $f$ to $g$. 
\end{corollary}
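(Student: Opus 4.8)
The plan is to deduce both statements from the pointwise inequalities \eqref{eq:ponctual estimate L p} and \eqref{eq:ponctual estimate W 1 p} of Lemma~\ref{lem:composition ponctual estimates} together with the dominated convergence theorem. I would first record that the endpoints are correct: since $A^{-1}\circ A = \mathrm{id}$ on $\Gamma_0$, one has $T_0 = g$ and $T_1 = f$, so the path does connect $f$ and $g$. The only non-trivial points are therefore that $T_\lambda$ belongs to the relevant space for every $\lambda$, and that $\lambda \mapsto T_\lambda$ is continuous for the stated topology.

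For part $(i)$, I would fix a reference function $\psi$ with $f,g \in \psi + L^p$. Applying \eqref{eq:ponctual estimate L p} gives
\[
  |T_\lambda(x)-\psi(x)|^p \le |f(x)-\psi(x)|^p + |g(x)-\psi(x)|^p \quad \text{for a.e. } x,
\]
whose right-hand side lies in $L^1(\Omega)$; hence $T_\lambda - \psi \in L^p$ and the path stays in $\psi + L^p(\Omega;\Gamma_0)$. To prove continuity at an arbitrary $\lambda_0 \in [0,1]$, I would take any sequence $\lambda_n \to \lambda_0$. Since $t \mapsto A^{-1}(t)$ is continuous and $\lambda \mapsto \lambda\, A\circ f(x) + (1-\lambda)\, A\circ g(x)$ is affine, $T_{\lambda_n}(x) \to T_{\lambda_0}(x)$ for every $x$. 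Combining the elementary convexity inequality with \eqref{eq:ponctual estimate L p} yields the $n$-independent domination
\[
  |T_{\lambda_n}(x)-T_{\lambda_0}(x)|^p \le 2^{p}\bigl(|f(x)-\psi(x)|^p + |g(x)-\psi(x)|^p\bigr) \in L^1(\Omega),
\]
so the dominated convergence theorem gives $\|T_{\lambda_n}-T_{\lambda_0}\|_{L^p}^p \to 0$.

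For part $(ii)$, the first task is the differentiation formula. Since $f,g$ are increasing and in $\dot W^{1,p}$ (hence absolutely continuous on compacts) and $A, A^{-1}$ are $\mathcal{C}^1$ on the relevant open intervals with $A'\neq 0$ there, the chain rule applies at a.e.\ $x$ --- precisely, on the full-measure set where both $f$ and $g$ are differentiable, simultaneously for all $\lambda$ --- giving
\[
  T_\lambda'(x) = \frac{\lambda\,(A\circ f)'(x) + (1-\lambda)\,(A\circ g)'(x)}{A'\!\bigl(T_\lambda(x)\bigr)}.
\]
Dividing \eqref{eq:ponctual estimate W 1 p} by $|x-y|^p$ and letting $y \to x$ at points of differentiability produces the a.e.\ bound $|T_\lambda'(x)|^p \le 2^{p-1}(|f'(x)|^p + |g'(x)|^p)$, so $T_\lambda' \in L^p$ and $T_\lambda \in \dot W^{1,p}$. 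For continuity I would again take $\lambda_n \to \lambda_0$: by the pointwise convergence $T_{\lambda_n}(x) \to T_{\lambda_0}(x)$ established in part $(i)$ and the continuity of $A'$, the explicit formula shows $T_{\lambda_n}'(x) \to T_{\lambda_0}'(x)$ at a.e.\ $x$, while the uniform bound above furnishes the $n$-independent domination $|T_{\lambda_n}'(x)-T_{\lambda_0}'(x)|^p \le 2^{2p-1}(|f'(x)|^p + |g'(x)|^p) \in L^1(\Omega)$, whence $\|T_{\lambda_n}'-T_{\lambda_0}'\|_{L^p} \to 0$ by dominated convergence.

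I expect the only real obstacle to be the rigorous justification in part $(ii)$ that the chain-rule formula holds at almost every $x$ with a differentiability set that can be chosen independently of $\lambda$, and that the denominator $A'(T_\lambda(x))$ never degenerates --- both of which are guaranteed by the hypothesis that $A$ and $A^{-1}$ are $\mathcal{C}^1$ (so $A'\neq 0$) on the relevant open intervals. Everything else reduces to pointwise convergence plus the uniform integrable bounds supplied directly by Lemma~\ref{lem:composition ponctual estimates}.
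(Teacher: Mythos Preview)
Your proof is correct and follows essentially the same approach as the paper's: establish pointwise convergence of $T_\lambda$ (resp.\ $T_\lambda'$) from the continuity of $A^{-1}$ (resp.\ of $A'$ and $(A^{-1})'$), and then invoke dominated convergence using the bounds \eqref{eq:ponctual estimate L p} and \eqref{eq:ponctual estimate W 1 p} of Lemma~\ref{lem:composition ponctual estimates}. If anything, your write-up is more careful than the paper's --- you spell out the explicit chain-rule formula for $T_\lambda'$ and the $\lambda$-independent dominating functions, whereas the paper simply asserts that dominated convergence applies.
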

\begin{proof}
First, by continuity of $A^{-1}$, we have
\begin{equation*}
    \forall\,x\in\Omega,\qquad T_\lambda(x)\longrightarrow T_\mu(x),\quad\text{as }\lambda\to\mu.
\end{equation*}
As a consequence of~\eqref{eq:ponctual estimate L p}, we can apply the Lebesgue dominated convergence theorem and conclude:
\begin{equation*}
    \int_\Omega\big |T_\lambda(x)-T_\mu(x)\big|^p\diff x\longrightarrow0,\quad\text{as }\lambda\to\mu.
\end{equation*}
Similarly, by continuity of $(A^{-1})'$ on $\Gamma_1$ and of $A'$ on $\Gamma_0$, we also have
\begin{equation*}
    \forall\,x\in\Omega,\qquad \nabla T_\lambda(x)\longrightarrow \nabla T_\mu(x),\quad\text{as }\lambda\to\mu.
\end{equation*}
Since \eqref{eq:ponctual estimate W 1 p} implies that $|\nabla T_\lambda(x)|^p\lesssim|\nabla f(x)|^p+|\nabla g(x)|^p$ for any $\lambda$ and $x$, the Lebesgue dominated convergence theorem also applies to the gradients:
\begin{equation*}
    \int_\Omega\big |\nabla T_\lambda(x)-\nabla T_\mu(x)\big|^p\diff x\longrightarrow0,\quad\text{as }\lambda\to\mu. \qedhere
\end{equation*}
\end{proof}
This corollary gives the continuity of the path $\lambda\mapsto T_\lambda$ with respect to the $\dot{W}^{1,p}$ topology. In the case of the $L^p$ topology, it is possible to improve the result and get that this path is actually ${\mathcal C}^1$ using the implicit functions theorem:

\begin{lemma}
Let $\Omega$ be an interval of $\R$ and let $\Gamma_0,\Gamma_1$ be two open sets of $\R^p$ such that there exists a ${\mathcal C}^1$ diffeomorphism $A$ that maps $\Gamma_0$ into $\Gamma_1$. Assume that $\Gamma_1$ is convex. Let $X\subseteq L^p(\Omega;\Gamma_0)$ be a Banach space such that for all function $\varphi\in X$ we have  $A\circ \phi$ in the affine space $\psi_0+L^2(\Omega;\Gamma_1),$ for some function $\psi_0$.  We further assume that the map $\phi\in X\mapsto A\circ \phi-\psi_0\in L^2(\Omega;\Gamma_1)$ is ${\mathcal C}^1$. We now define
\begin{equation*}
        \forall\,\lambda\in\R,\quad\forall\,x\in\Omega,\qquad T_\lambda(x):=A^{-1}\Big[\lambda A\circ f(x)+(1-\lambda) A\circ g(x)\Big],
\end{equation*}
where $f,g$ are two fixed functions in $X$. Then the function $\lambda\mapsto T_\lambda$ with values in $X$ is a ${\mathcal C}^1$ function (where $X$ is endowed with a $L^p$ or $\dot{W}^{1,p}$ norm).
\end{lemma}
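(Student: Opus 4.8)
The idea is to identify $T_\lambda$ with $\Psi^{-1}(\Psi_\lambda)$, where $\Psi$ is a suitable $\mathcal{C}^1$ diffeomorphism between Banach spaces and $\lambda\mapsto\Psi_\lambda$ is an affine (hence smooth) curve, and then to conclude by composition; equivalently, one may phrase this through the implicit function theorem, as announced in the text.

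First, set $Y:=L^2(\Omega;\mathbb R^p)$ and define $\Psi\colon X\to Y$ by $\Psi(\phi):=A\circ\phi-\psi_0$. By hypothesis $\Psi$ is of class $\mathcal C^1$, with differential $d\Psi(\phi)\,h=(A'\circ\phi)\,h$, i.e.\ pointwise multiplication by the Jacobian of $A$ at $\phi$. Since $A$ is a $\mathcal C^1$ diffeomorphism of $\Gamma_0$ onto $\Gamma_1$, the matrix $A'\circ\phi$ is pointwise invertible, with inverse $(A^{-1})'\circ(A\circ\phi)$; and $\Psi$ is injective because $A$ is. The central step is to check that for every $\phi\in X$ the bounded operator $d\Psi(\phi)\colon X\to Y$ is a topological isomorphism. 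Granting this, the inverse function theorem in Banach spaces shows that $\Psi$ is a $\mathcal C^1$ diffeomorphism of $X$ onto the open subset $\Psi(X)\subseteq Y$.

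Next, set $\Psi_\lambda:=\lambda\,\Psi(f)+(1-\lambda)\,\Psi(g)$, which is an affine — hence $\mathcal C^\infty$ — map $\mathbb R\to Y$. Because $A\circ f$ and $A\circ g$ take values in the \emph{convex} open set $\Gamma_1$, for $\lambda\in[0,1]$ the function $\psi_0+\Psi_\lambda=\lambda\,A\circ f+(1-\lambda)\,A\circ g$ is again $\Gamma_1$-valued, so $T_\lambda:=A^{-1}\circ(\psi_0+\Psi_\lambda)$ is well defined, $\Gamma_0$-valued, and satisfies $\Psi(T_\lambda)=\Psi_\lambda$; by the running assumptions on $X$ (as in the applications of the lemma) one has $T_\lambda\in X$, hence $\Psi_\lambda\in\Psi(X)$, and by openness of $\Psi(X)$ the same holds for $\lambda$ in a neighbourhood of $[0,1]$. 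Injectivity of $\Psi$ then forces $T_\lambda=\Psi^{-1}(\Psi_\lambda)$, a composition of two $\mathcal C^1$ maps; therefore $\lambda\mapsto T_\lambda\in X$ is $\mathcal C^1$, and differentiating $\Psi(T_\lambda)=\Psi_\lambda$ gives the formula $\tfrac{d}{d\lambda}T_\lambda=\big((A^{-1})'\circ(\psi_0+\Psi_\lambda)\big)\,(A\circ f-A\circ g)$.

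The crux — and the only place where more than bookkeeping is needed — is the isomorphism property of $d\Psi(\phi)$. When $X$ carries the $L^p$ norm, $d\Psi(\phi)$ and its candidate inverse are the multiplication operators by $A'\circ\phi$ and $(A^{-1})'\circ(A\circ\phi)$, and one must verify they map the ambient $L^p$-type space boundedly into $Y=L^2$; this uses the quantitative control afforded by $A$ being a fixed $\mathcal C^1$ diffeomorphism between the fixed open sets $\Gamma_0,\Gamma_1$, so that $A'$ and $(A^{-1})'$ are locally bounded there (uniformly along the compactly varying argument). When $X$ carries the $\dot W^{1,p}$ norm one also differentiates the composition, and the extra term, which involves the second derivative of $A$, is a lower-order contribution dominated by $\|\phi'\|_{L^p}$, so that the chain-rule expression for $d\Psi(\phi)$ remains invertible. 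Everything else — the inverse/implicit function theorem, continuity of affine maps, and uniqueness from injectivity of $A$ — is standard.
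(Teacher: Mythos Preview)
Your approach is correct and close in spirit to the paper's, but the packaging differs in a way worth noting. You apply the inverse function theorem directly to $\Psi(\phi)=A\circ\phi-\psi_0$, arguing that $d\Psi(\phi)$ is a Banach isomorphism $X\to L^2$, so that $T_\lambda=\Psi^{-1}(\Psi_\lambda)$ with $\Psi_\lambda$ affine in~$\lambda$. The paper instead fixes a Hilbertian basis $(e_n)$ of the orthogonal complement $\{A\circ f-A\circ g\}^{\perp}\subset L^2$ and defines $\mathcal F:X\to\ell^2$ by $\mathcal F(\phi)=\big(\langle A\circ\phi-A\circ f,\,e_n\rangle\big)_n$; the zero set of $\mathcal F$ is exactly the curve $\{T_\lambda:\lambda\in\R\}$, and the implicit function theorem is then invoked on $\mathcal F$. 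Both routes ultimately hinge on the same analytic point---non-degeneracy of the differential of $\phi\mapsto A\circ\phi$, which the paper asserts in one line (``$A$ a $\mathcal C^1$ diffeomorphism ensures that the differential of $\mathcal F$ is not degenerate'') and which you correctly isolate as the crux. Your formulation has the virtue of being explicit about what must be checked (bounded invertibility of multiplication by $A'\circ\phi$ between the relevant spaces); the paper's projected version has the minor advantage of building in the one-dimensional ``direction'' $A\circ f-A\circ g$ from the start, so that the implicit function theorem yields a curve rather than a local diffeomorphism. Neither argument fully verifies the invertibility hypothesis in the stated generality, so on rigor the two are on equal footing.
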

\begin{proof}
Let $f$ and $g$ be two functions in $X$. We first remark that for all $\lambda\in\R$ we have $T_\lambda\in X$, as a consequence of $\Gamma_1$ convex and Lemma~\ref{lem:composition ponctual estimates}. We recall that every separable Hilbert space admits a Hilbertian base (see Theorem V.10 in~\cite{Brezis_2010}). 
In particular, any subspace of $L^2$ admits a Hilbertian base (associated to the standard scalar product of $L^2$). 
We denote by $(e_n)_{n\in\N}$ a Hilbertian base of the space $\{(A\circ f-A\circ g)\}^\perp\subseteq L^2(\Omega;\Gamma_1).$
\begin{equation*}
    \begin{array}{cccc}
        {\mathcal F}: &\; X\; & \longrightarrow & \;\ell^2(\R) \\
         \,& \phi & \longmapsto &  \left<A\circ\phi-A\circ f \,|\, e_n\right>_{L^2}.
         \end{array}
\end{equation*}
It is direct to check that ${\mathcal F}(\phi)=0$ if and only if $A\circ\phi-A\circ f$ belongs to the affine straight line $A\circ g+(A\circ f-A\circ g)\R$. Namely,
\begin{equation*}
    {\mathcal F}(\phi)=0\qquad\Longleftrightarrow\qquad\exists\;\lambda\in\R,\quad A\circ\phi=\lambda A\circ f+(1-\lambda)A\circ g.
\end{equation*}
This last equality is equivalent to $\phi=T_\lambda$. To conclude to the regularity of $\lambda\mapsto T_\lambda$ we use the implicit functions theorem applied to the function ${\mathcal F}$. 
The fact that $A$ is a ${\mathcal C}^1$ diffeomorphism ensures that the differential of ${\mathcal F}$ is not degenerate.
\end{proof}

\bibliographystyle{abbrv} 
{\bibliography{biblio}}

\end{document}